\documentclass{article}


\usepackage[letterpaper,top=2cm,bottom=2cm,left=3cm,right=3cm,marginparwidth=1.75cm]{geometry}

\usepackage{graphicx}
\usepackage{amsfonts,amsmath,amssymb,amsthm,amscd}
\usepackage{mathrsfs}
\usepackage{mathtools}
\usepackage{indentfirst}
\usepackage{placeins}
\usepackage{listings}
\usepackage{caption}
\usepackage{subcaption}
\usepackage{xcolor}
\usepackage{comment}
\usepackage{url}
\usepackage{bm}
\usepackage[toc,page]{appendix} 
\usepackage{fancyvrb} 
\usepackage{multirow}
\usepackage{mathtools,nccmath}
\usepackage{cancel} 
\usepackage{enumitem} 
\usepackage[normalem]{ulem} 
\usepackage{fullpage}
\usepackage{dsfont} 
\usepackage{soul}

\usepackage[pdfencoding=auto,psdextra]{hyperref} 
\hypersetup{
  colorlinks   = true, 
  urlcolor     = black, 
  linkcolor    = blue, 
  citecolor   = blue 
}

\newcommand{\R}{\mathbb{R}}
\newcommand{\eps}{\varepsilon}
\newcommand{\cL}{\mathcal{L}}
\newcommand{\cN}{\mathcal{N}}

\theoremstyle{definition}
\newtheorem{defi}{Definition}[section]

\theoremstyle{remark}
\newtheorem{rem}[defi]{Remark}
\usepackage{etoolbox} 
\AtEndEnvironment{rem}{\qed}
\AtEndEnvironment{example}{\qed}

\theoremstyle{plain}
\newtheorem{theorem}[defi]{Theorem}
\newtheorem{lemma}[defi]{Lemma}
\newtheorem{corollary}[defi]{Corollary}
\newtheorem{prop}[defi]{Proposition}

\def\E{\mathbb{E}}

\def\rank{\mathrm{rank}}
\def\tr{\mathrm{Tr}}



\title{Affine constraints in non-reversible diffusions with degenerate noise}
\author{Carsten Hartmann\thanks{Institut f\"ur Mathematik, Brandenburgische Technische Universit\"at Cottbus-Senftenberg, Konrad-Wachsmann-Allee 1, D-03046 Cottbus, Germany.\ Email:\href{mailto:carsten.hartmann@b-tu.de}{carsten.hartmann@b-tu.de}},  
Lara Neureither\thanks{Institut f\"ur Mathematik, Brandenburgische Technische Universit\"at Cottbus-Senftenberg, Konrad-Wachsmann-Allee 1, D-03046 Cottbus, Germany.\ Email:\href{mailto:neurelar@b-tu.de}{neurelar@b-tu.de}}, 
Upanshu Sharma\thanks{School of Mathematics and Statistics, University of New South Wales, Sydney 2052, Australia.\ Email:\href{mailto:upanshu.sharma@unsw.edu.au}{upanshu.sharma@unsw.edu.au}}}

\allowdisplaybreaks

\begin{document}

\maketitle

\tableofcontents

\newpage

\begin{abstract}
    This paper deals with the realisation of affine constraints on nonreversible stochastic differential equations (SDE) by strong confining forces. We prove that the confined dynamics converges pathwise and on bounded time intervals to the solution of a projected SDE in the limit of infinitely strong confinement, where the projection is explicitly given and depends on the choice of the confinement force. We present results for linear Ornstein-Uhlenbeck (OU) processes, but they straightforwardly generalise to nonlinear SDEs.  
    Moreover, for linear OU processes that admit a unique invariant measure, we discuss conditions under which the limit also preserves the long-term properties of the SDE. More precisely, we discuss
    choices for the design of the confinement force which in the limit yield a projected dynamics with invariant measure that agrees with the conditional invariant measure of the unconstrained processes for the given constraint. 
    The theoretical findings are illustrated with suitable numerical examples. 
\end{abstract}

\section{Introduction}

The realisation of constraints by strong confining forces is a classical theme in mechanics; see e.g.~\cite{bornemann1997homogenization,eldering2016realizing,kozlov1990realization} and the references therein. 
Recently there has been a growing interest in studying constrained stochastic differential equations \cite{projection_diffusion,WalterHartmannMaddocks11} and Monte Carlo methods for manifolds \cite{hartmann2008ergodic,xu2024monte}, due to their relevance in molecular dynamics \cite{CiccottiKapralEijnden05,LelievreRoussetStoltz12}, material science \cite{hinch1994brownian}, computational statistics \cite{girolami2011riemann},  or machine learning \cite{pmlr-v22-brubaker12,leimkuhler-constraint-regularization-nn}. 

This article is concerned with the realisation of algebraic  constraints on stochastic differential equations with degenerate noise. Specifically, we consider linear diffusions of Ornstein-Uhlenbeck type with a stiff confinement term that penalises deviations of the stochastic dynamics from the constraint surface. In order to avoid getting lost in details on the differential geometry of submanifolds, we confine our attention to affine subspaces, i.e.~affine constraints. In doing so, we focus on two aspects: (1) the pathwise approximation of an unconstrained diffusion with a strong confining force on a bounded time interval, also called the ``softly constrained system'' in what follows, by a projected or (hard) constrained linear diffusion, (2) the preservation of the underlying Gaussian invariant measure under constraining and strong confinement.   

Available works so far have focused primarily on overdamped Langevin dynamics~\cite{projection_diffusion,SharmaZhang21}, the second reference including additional rotational effects, and underdamped Langevin systems with stiff potentials \cite{reich2000smoothed}. 
Overdamped Langevin systems are rather special, in that they rely on a gradient structure: in its natural coordinates, the overdamped Langevin dynamics is a gradient flow that is perturbed by uncorrelated white noise, whereas the underdamped Langevin dynamics is based on a Hamiltonian structure that is perturbed by noise and dissipation that are balanced in a specific way. The linear systems considered in this paper are not restricted to this class. 
This has consequences for the realisation of constraints on such systems as there is no simple mechanism of imposing constraints by adding strong gradient forces that preserve the invariant measure under the restriction imposed by the constraint.

\paragraph*{Brittleness of non-reversible and degenerate noise dynamics under constraining.}
To illustrate the observation that the structure of the noise may interfere with the realisation of constraints, consider the simple linear system
\begin{align*}
dX^1_t & = X^2_t dt\\
dX^2_t & = -(X^1_t + X^2_t) dt + \sqrt{2}dW_t
\end{align*}
where $W_t$ is a Brownian motion, which has a unique Gaussian invariant measure with zero mean and unit covariance $\mu= \mathcal{N}(0,I)$. Now, let us impose a constraint on any of the two variables. If we fix $X^1_t=b$, the second equation turns into
\[
    dX^2_t = -(b + X^2_t) dt + \sqrt{2}dW_t\,.
\]      
The process $X^2$ does not convergence to the Gaussian distribution of the full system conditioned on $X^1=b$, which is given by $\mu_c= \mathcal{N}(0,I)$, as one could expect, but to a Gaussian measure with unit variance and mean $-b$. On the other hand, if we keep the other variable fixed, $X^2_t=c$, then the resulting dynamics for the first variable, 
\[
X^1_t = X^1_0 + c t\,,
\] 
has no invariant measure at all. This observation should be contrasted with the situation for a reversible system. Consider, for example, the  two-dimensional process $X=(X^1,X^2)$ governed by  
\[
    dX_t = -A X_t dt + \sqrt{2}dW_t
\]
for a symmetric positive definite $2\times 2$-matrix $A$ and Brownian motion $W=(W^1,W^2)$ with two uncorrelated components. 
The unique invariant measure is a zero-mean Gaussian with covariance  $\Sigma=A^{-1}$, and the conditional distributions for $X^2$ given $X^1=b$ and $X^1$ given $X^2=c$ agree with the invariant measures of the corresponding constrained dynamics. The latter follows from the simple fact, that the noise coefficient is the same in both equations and the drift term is the gradient of the potential \[
H(x) = \frac{1}{2}x^T A x = \frac{1}{2}x^T\Sigma^{-1}x\,,
\]
where the invariant measure has a density proportional to $\exp(-H)$. 

Note that the robustness of the invariant measure of reversible dynamics with uncorrelated noise under constraining remains true beyond the simple linear setting that we consider here.

\paragraph{Contribution and novelty}

In this paper we consider linear diffusions subject to affine constraints, but we emphasize that the key results from Section \ref{sec:softOU} can be readily generalised to systems with nonlinear drift (cf.~Remarks \ref{rem:genOU-IC} and \ref{rem:QuantNonlin}). 
While it may appear overly restrictive to consider only the linear case, our work extends available results for nonlinear diffusions on Riemannian submanifolds in several regards. First, our analysis is for linear diffusions with degenerate noise (that may or may not have an invariant measure), while most of the available works, such as \cite{projection_diffusion,SharmaZhang21} apply to the non-degenerate case only. Second, we prove explicit convergence rates for pathwise convergence in the mean square sense, whereas the strongest result so far known to us \cite{Katzenberger91} only discusses qualitative convergence of the stopped process in probability and on bounded time intervals; confining ourselves to linear systems considerably simplifies the  proofs as compared to the rather technical arguments in \cite{Katzenberger91} that utilize relative compactness in a Skorokhod topology. While the results in \cite{Katzenberger91} apply to general c\`adl\`ag processes, our convergence result includes the treatment of transient initial layers in case the initial conditions do not converge to the constraint subspace. Third, we give an explicit characterisation of the limit dynamics in terms of an ambient space formulation involving a projection, 
which is typically only available for overdamped (i.e.\ reversible) Langevin dynamics ~\cite{projection_diffusion}. In this setting, the restriction of the unconstrained dynamics to the constraint subspace is by orthogonal projection. Yet, in contrast to the reversible case with uncorrelated noise, the orthogonal projection may not preserve the invariant measure. Related ambient space formulations also exist for underdamped Langevin dynamics when dealing with hard constraints~\cite{LelievreRoussetStoltz12,WalterHartmannMaddocks11}.
Additionally, we derive explicit expressions for confining forces and resulting oblique projections that preserve either (conditional) moments or the invariant measure in the strong confinement limit; this choice is not unique, moreover the fact that the projection is oblique implies that the resulting constraint forces have a non-vanishing tangential component that affects the dynamics on the constraint surface. As a consequence, the constraint forces violate d'Alemberts principle from classical mechanics, which postulates that forces acting on a constrained system are the same as the tangential components of the corresponding unconstrained system (cf.~\cite{RubinUngar57,bornemann1997homogenization}).

Studying the linear case is not only relevant from a conceptual point of view, but it is also relevant for applications as non-reversible MCMC methods for high-dimensional Gaussians is an active field of research, see e.g.~\cite{lelievre2013optimal,ottobre2020optimal,zhang2022efficient}. 
It has been argued that non-reversibility can accelerate convergence to equilibrium and therefore the speed of convergence of MCMC methods \cite{HwangHwangSheu93,LelievreNierPavliotis13,bierkens2016non,song2022irreversible}. What further motivates our study, is that constraints can be used to cope with ill-conditioned (i.e.~stiff) sampling or stochastic optimisation problems. In applications, soft constraints play the role of a penalisation that introduces numerical stiffness; introducing hard constraints instead of soft ones can reduce stiffness and allow for using larger time steps in the numerical discretisations; see e.g. \cite{roy2017constrained,langmore2023hamiltonian}.   

We should mention that the pathwise convergence analysis in this paper heavily relies on asymptotic stability in that we exploit that the matrix exponential associated with the softly constrained system of stochastic differential equations is dissipative in directions away from the constraint subspace and converges to a projected matrix exponential that is associated with the system under hard constraints. Our analysis is therefore complementary to existing works on weak convergence of conservative  Hamiltonian systems, e.g.~\cite{bornemann1997homogenization,klar2021second} that are non-dissipative and become highly oscillatory in the strong confinement limit, excluding a pathwise analysis. We briefly discuss the relevance of this observation for the underdamped Langevin equation in the course of this paper. Details will be dealt with in a companion paper.

\paragraph{Outline of the article}

The rest of the article is structured as follows. Section \ref{sec:prelim} records the basic notation used throughout this article. Section \ref{sec:softOU} contains the first of the two main results, the pathwise convergence analysis of the softly constrained Ornstein-Uhlenbeck process, together with a review of related results by Katzenberger (Section \ref{ssec:Katz}) and an example where our approach as well as these related results fail (Section \ref{ssec:LinearLangevin}). The second main result, the choice of the oblique projection matrix that guarantees that the constrained dynamics admits the desired long-term stability with respect to the target invariant measure is the content of Section \ref{sec:SteadyState-K}. Specifically, we discuss the connection between the preservation of the (conditional) invariant measure and the choice of the projection matrix (Section \ref{ssec:obliqueProjection}) as well as the connection between the confinement force and the resulting projection (Section \ref{ssec:MatrixK}). The theoretical findings are illustrated with suitable numerical examples in Section \ref{sec:numEx}. Conclusions are drawn in Section \ref{sec:conclusions}. The article contains an appendix that records various technical lemma and auxiliary statements.

\section{Notation}\label{sec:prelim}

Throughout the paper, we split coordinates, vector fields and matrices into constrained and unconstrained variables as follows: A state vector $x\in\R^d$ with $k$ constrained variables is split into $x=(x^1,x^2)^T \in\R^k\times\R^{d-k}$. We typically understand $x\in\R^d$ as a column vector, but we nevertheless write $x=(x^1,x^2)\in\R^k\times\R^{d-k}$ for a row and $x=(x^1,x^2)^T\in\R^k\times\R^{d-k}$ for a column vector, i.e.~$x^1$ and $x^2$ can be either rows or columns, depending on the context. We denote by $X_t=(X_t^1,X_t^2)$ the state of a process $(X_t)_{t\ge 0}$ at time $t\ge 0$. In the same vein, we use the following block-matrix notation and partition a matrix $S\in \R^{d\times d}$ according to 
    \begin{equation}\label{not:BlockMat1}
        S= \begin{pmatrix}
        S_{11} & S_{12} \\  S_{21} & S_{22}
        \end{pmatrix} \in \R^{d \times d}
    \end{equation}
    with 
      \begin{equation}\label{not:BlockMat2}
S_{11} \in \R^{k \times k}, \ S_{12} \in \R^{k \times (d-k)}, \ S_{21} \in \R^{(d-k) \times k}, \text{ and } \ S_{22} \in \R^{(d-k) \times (d- k)}.
    \end{equation}
We write $X^\eps\to Y$ for a softly constrained process  $(X^\eps_t)_{t\ge 0}$ that converges to a process $(Y_t)_{t\ge 0}$ in the limit $\eps\to 0$ where the process $Y$ is subject to a hard constraint. The precise mode of convergence will be specified in the results. For the sake of readability, the confinement parameter $\eps>0$ will be omitted when it plays no role, i.e.~we mostly write $X^\eps_t=(X^1_t,X^2_t)$ instead of $X^\eps_t=(X^{\eps,1}_t,X^{\eps,2}_t)$ where $X^1_t$ and $X^2_t$ both may depend on $\eps$. 

Orthogonal or oblique projection (matrices) are denoted by $P$,  e.g.
\[
Px=\begin{pmatrix}
    0 \\ x^2
\end{pmatrix}\,,
\]
with $P=P^2$ being an idempotent linear operator or $d\times d$-matrix.  The nabla operator 
\[
\nabla =\left(\frac{\partial}{\partial x_1},\,\ldots ,\,\frac{\partial}{\partial x_d}\right)^T\,,
\]
with $x_1,\ldots,x_d$ being the components of $x^1=(x_1,\ldots,x_k)$ and $x^2=(x_{k+1},\ldots,x_d)$, 
is understood as a column vector, i.e. the gradient $\nabla f$ of a 
differentiable function $f\colon\R^d\to\R$ is a column vector, whereas $\nabla\cdot G$ denotes the divergence of a (smooth) vector field $G$, with $x\cdot y=x^Ty$ denoting the inner product between two vectors $x$ and $y$. 
Finally, we write $A:B=\tr(A^TB)$ to denote the Frobenius inner product of two square matrices $A$ and $B$, and  
$\|M\|^2_F = M:M$
for the Frobenius norm of a  matrix $M$.  A matrix $M$ is \emph{Hurwitz} if its spectrum lies in the open left plane, i.e.\ its eigenvalues have strictly negative real parts. Finally, we use the notation $M
>0$ for a square positive definite matrix, i.e.\ if $v^TMv>0$ for any $v \neq 0$.

\section{Convergence results for softly constrained OU processes}\label{sec:softOU}

In this section we study the behaviour of linear constraints when applied to general OU processes. Here linear constraint refers to a mapping 
\begin{equation*}
    \xi\colon \R^d\to \R^k, \ k \leq d, \  \xi(x)=x^1 - b, 
\end{equation*}
where $b\in \R^k$ is a given constant vector and we have used the notation $ x = \left(x^1,\, x^2\right)^T
\in \R^{k}\times\R^{d-k}$. In the case $b\equiv 0$, the map $\xi$ is simply a coordinate projection. We have made the choice to include $b$ in the map $\xi$ since it has important implications regarding sampling of conditional measures discussed in the next section (see Proposition~\ref{prop:genOU-SteSt} and ensuing discussion). 
Our work in this section straightforwardly generalises to affine maps $\xi$ and non-zero mean OU processes as opposed to zero-mean OU process~\eqref{eq:genOU-SC} studied below (see Remark~\ref{rem:genOU-affine-gen}).  The restriction to coordinate-projection constraints is for simplicity of presentation.

We are interested in the $\eps\to 0$ limit of the SDE
\begin{equation} \label{eq:genOU-SC}
    dX_t = MX_t dt - \frac{1}{2\eps} K \nabla|\xi(X_t)|^2 dt + \sqrt{2}C dW_t,
\end{equation}
where $M,K\in\R^{d\times d}$ and $C\in \R^{d\times n}$ are given matrices. Following the notation above we write $X_t\in \R^{d}$ as $X_t=\left(
X^1_t, \,X^2_t\right)^T\in \R^{k}\times\R^{d-k}$. Finally, $W_t$ is a standard Brownian motion in $\R^n$. Note that the matrix $K$ encodes the way in which the zero level-set $\xi^{-1}(0)=\{x\in\R^d:\xi(x)=0\}$ is approached. For instance, if we only focus on the stiff part of the dynamics and choose $K=I$ then we approach the zero level-set via a gradient descent (see~\cite{projection_diffusion,Zhang20} for related ideas). On the other hand, with $K=J-A$ (for $J=-J^T$ and $A=A^T>0$ -- see Section~\ref{sec:SteadyState-K} for details) one expects a spiralling descent~\cite{SharmaZhang21}. See Remark~\ref{rem:genOU-proj} for more details.

The following theorem discusses the soft-constrained limit, $\eps\to 0$, in~\eqref{eq:genOU-SC}. This result uses the block-matrix notation introduced in~\eqref{not:BlockMat1}--\eqref{not:BlockMat2}. 
\begin{theorem}\label{thm:genOU-SC}
Given $\eps>0$, let $X_t^\eps$ be the solution to~\eqref{eq:genOU-SC} with (random) initial datum $X^\eps|_{t=0}=X^\eps_0$. Assume that 
\begin{enumerate}
    \item   $-K_{11} \in\R^{k\times k}$ is Hurwitz,
    \item  $X_0^\eps\to X_0$ in probability as $\eps \to 0$.
\end{enumerate}
Then for any $t>0$ we have 
\begin{equation*}
    X^\eps_t \xrightarrow{\eps \to 0} Y_t \text{ in probability}
\end{equation*}
where $Y_t$ is the solution to the SDE (in $\R^d$) 
\begin{equation}\label{eq:genOU-SC-limit}
    dY_t = PM Y_t dt + \sqrt{2 } PC dW_t, 
\end{equation} 
with $P\in\R^{d\times d}$ defined as 
\begin{equation}\label{def:genOU-Proj}
    P\coloneqq 
    \begin{pmatrix}
        0_{k\times k } & 0_{ k\times (d-k)} \\ \alpha & I_{(d-k)\times (d-k)}    
    \end{pmatrix}, \text{ \ where  \ } \alpha \coloneqq -K_{21}K^{-1}_{11} \in \R^{(d-k)\times k}
\end{equation}
and initial data 
\begin{equation*}
    Y|_{t=0}=P \biggl( X_0 - \begin{pmatrix} b \\ 0\end{pmatrix} \biggr) + \begin{pmatrix} b \\ 0\end{pmatrix}.
\end{equation*}
Using  $Y_t=\left(Y^1_t,\,Y^2_t\right)^T$ where $Y^1_t\in\R^{k}$ and $Y^2_t\in \R^{d-k}$, the limit~\eqref{eq:genOU-SC-limit} can explicitly be written as
\begin{equation}\label{eq:genOU-SC-limit-Expl}
    \begin{aligned}
        dY_t^1 &= 0  \\
        dY_t^2 & =  (\alpha M_{11} + M_{21} ) Y^1_t + (\alpha M_{12} + M_{22} ) Y^2_t dt + \sqrt{2} \hat C dW_t, 
    \end{aligned}  
\end{equation}
where $\hat C = (\alpha C_{11} + C_{21} \ \ \alpha C_{12} + C_{22}) \in \R^{(d-k) \times n}$ and initial data
\begin{equation*}
    Y^1|_{t=0} = b, \ \ Y^2|_{t=0} = \alpha X_0^1 + X_0^2 - \alpha b.
\end{equation*}
Here $X_0=\left(X_0^1,\, X_0^2\right)^T$ is the prescribed limiting initial data for the sequence $X^\eps_0$. In particular,  $Y^1_t\equiv Y^1|_{t=0}$ and therefore  we find
\[\xi(Y_t) = 0, \text{ for any } t\geq 0\,,\]    
i.e.\ the constraint $\xi(Y_t)=0$ is satisfied for every $t$. 
\end{theorem}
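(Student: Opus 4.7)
The plan is to diagonalise the fast/slow structure of~\eqref{eq:genOU-SC} by a linear change of variables, exploit the Hurwitz property of $-K_{11}$ to kill the fast mode, and then pass to the limit in the (unstiffened) slow equation. Since $\nabla|\xi(x)|^2 = 2(x^1-b,\,0)^T$, the stiff contribution in block form is $\eps^{-1}(K_{11}(X^1_t-b),\,K_{21}(X^1_t-b))^T$. I would introduce the fast deviation $U_t \coloneqq X^1_t-b\in\R^k$ and the slow variable $V_t \coloneqq \alpha X^1_t + X^2_t - \alpha b\in\R^{d-k}$ with $\alpha \coloneqq -K_{21}K_{11}^{-1}$. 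The algebraic identity $\alpha K_{11}+K_{21}=0$, built in by construction, removes the $1/\eps$ term from the $V$-equation, leaving a system of the form
\begin{align*}
    dU_t &= -\tfrac{1}{\eps}K_{11}U_t\,dt + L_1(U_t,V_t)\,dt + \sqrt{2}\,(C_{11},\,C_{12})\,dW_t,\\
    dV_t &= L_2(U_t,V_t)\,dt + \sqrt{2}\,\hat C\,dW_t,
\end{align*}
in which $L_1,L_2$ are affine with constant coefficients depending on $M$ and $\alpha$. In the original variables the set $\{U=0\}$ is precisely the constraint $\{x^1=b\}$, and the map $(U,V)\mapsto(b,\,V-\alpha U+\alpha b)^T$ is the oblique projection $x\mapsto P(x-(b,0)^T)+(b,0)^T$ of~\eqref{def:genOU-Proj}.

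Next I would control the fast mode by variation of constants. The Hurwitz assumption yields $\|e^{-K_{11}s/\eps}\|\le c\,e^{-\lambda s/\eps}$ for some $c,\lambda>0$, and I would first establish a uniform moment estimate $\sup_{\eps>0}\sup_{s\le t}\E|X^\eps_s|^2<\infty$ via a quadratic Lyapunov function in the $(U,V)$ coordinates that makes the stabilising sign of $-K_{11}/\eps$ explicit. Writing
\[
U_t = e^{-K_{11}t/\eps}U_0 + \int_0^t e^{-K_{11}(t-s)/\eps}L_1(U_s,V_s)\,ds + \sqrt{2}\int_0^t e^{-K_{11}(t-s)/\eps}(C_{11},C_{12})\,dW_s
\]
and substituting $u=(t-s)/\eps$ in each convolution integral shows that the drift integral is $O(\eps)$ in $L^2$ and the It\^o integral has variance $O(\eps)$. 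Combined with the exponentially small initial-layer term $e^{-K_{11}t/\eps}U_0$ at any fixed $t>0$, this delivers $\E|U_t|^2\to 0$, hence $U_t\to 0$ in probability.

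Third, I would pass to the limit in the slow equation. Since $L_2(U,V) = A_1U + A_2 V + \mathrm{const}$ is affine, I would compare $V_t^\eps$ with the candidate limit $\bar V_t$ solving the same linear SDE but with $U_s\equiv 0$ and $\bar V_0 \coloneqq \alpha X^1_0 + X^2_0 - \alpha b$. Subtracting the two equations, taking expectations of squared norms, and applying Gronwall gives
\[
\E|V_t^\eps-\bar V_t|^2 \;\lesssim\; \E|V^\eps_0-\bar V_0|^2 + \int_0^t \E|U_s^\eps|^2\,ds,
\]
and both terms tend to zero: the first by the convergence $X^\eps_0\to X_0$ in probability together with the uniform second-moment bound, the second by dominated convergence against the exponential envelope from the fast-mode estimate. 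Since $X^{\eps,2}_t = V_t^\eps - \alpha U_t^\eps$, this yields $X^{\eps,2}_t\to \bar V_t$ and $X^{\eps,1}_t\to b$ in probability, which matches~\eqref{eq:genOU-SC-limit-Expl} with $Y^1_t\equiv b$ and $Y^2_t=\bar V_t$. Rewriting $(b,\bar V_t)^T$ as $P(X_t-(b,0)^T)+(b,0)^T$ then recovers the projection formulation~\eqref{eq:genOU-SC-limit}, and $\xi(Y_t)\equiv 0$ is immediate from $Y^1_t\equiv b$.

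The main obstacle I anticipate is the uniform-in-$\eps$ moment bound, since a naive Gronwall on the original $(X^1,X^2)$ system picks up the $1/\eps$ factor and blows up. The remedy, already made visible by the coordinate split above, is to run Gronwall in the $(U,V)$ variables, where the coupling between fast and slow components is purely through the affine slow drift and the stabilising contribution of the fast block can be extracted via a quadratic Lyapunov function. Once that bound is in place the remaining estimates reduce to elementary properties of the matrix exponential $e^{-K_{11}t/\eps}$, which is precisely why the linear setting admits the explicit convergence rates advertised in the introduction, whereas the nonlinear/degenerate setting of~\cite{Katzenberger91} requires tightness in a Skorokhod topology.
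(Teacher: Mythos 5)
Your proposal is correct in its overall structure and takes a genuinely different route from the paper. The paper works directly with the variation-of-constants representation of the full $d$-dimensional process, $X^\eps_t = e^{(M-\frac{1}{\eps}\tilde K)t}X_0^\eps + \sqrt{2}\int_0^t e^{(M-\frac{1}{\eps}\tilde K)(t-s)}C\,dW_s$, and invokes the Campbell--Rose theorem on matrix exponential limits (Lemma~\ref{thm:CampbellRose}) to get $e^{(M-\frac{1}{\eps}\tilde K)t}\to e^{PMt}P$ after identifying the Drazin inverse of $\tilde K$; the stochastic integral is then handled by Doob plus It\^o isometry plus dominated convergence. You instead change coordinates to $(U,V)=(X^1-b,\,\alpha X^1+X^2-\alpha b)$ so that the $1/\eps$ term disappears from the $V$-equation, kill the fast mode $U$ by exponential decay of $e^{-K_{11}s/\eps}$, and pass to the limit in the slow equation by Gronwall. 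The two proofs reveal the projection $P$ differently: the paper recovers it as $I-\tilde K\tilde K^\dag$, while your slow variable $V$ \emph{is} the projected coordinate, which makes the block form~\eqref{eq:genOU-SC-limit-Expl} and the identity $\xi(Y_t)\equiv 0$ immediate without further algebra. Your approach is closer in spirit to a Tikhonov singular-perturbation argument and avoids importing an external lemma, at the cost of the extra Gronwall machinery.

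There is one real gap you should close: the Gronwall step $\E|V_t^\eps-\bar V_t|^2 \lesssim \E|V_0^\eps-\bar V_0|^2 + \int_0^t\E|U_s^\eps|^2\,ds$ and your ``uniform second-moment bound'' both require $\sup_{\eps>0}\E|X_0^\eps|^2<\infty$ (and convergence of $\E|X_0^\eps-X_0|^2$), but the theorem assumes only convergence in probability of $X_0^\eps$. The paper sidesteps this entirely: the initial datum enters only through the term $e^{(M-\frac{1}{\eps}\tilde K)t}X^\eps_0$, where the matrix is deterministic and converges pointwise, so Slutsky's theorem gives convergence in probability of the product with no moment assumption, and the stochastic integral has purely deterministic $L^2$ norm. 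To make your argument match the hypotheses, either localize by stopping at the first exit time from a large ball (then let the radius tend to infinity), or split as in the paper: write $V_t^\eps - \bar V_t$ via variation of constants as a deterministic matrix times $(V_0^\eps-\bar V_0)$ plus a remainder driven by $U^\eps$, treat the first piece by Slutsky, and establish $\int_0^t\E|U_s^\eps|^2\,ds\to 0$ conditionally on $|X_0^\eps|\le R$ before sending $R\to\infty$. Once this is patched the rest of your argument is sound, and the Lyapunov-weighted norm $U^TP_L U$ with $K_{11}^TP_L+P_LK_{11}>0$ is indeed the right tool to make the stabilising sign of $-K_{11}/\eps$ explicit for non-symmetric $K_{11}$.
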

Before we prove the above result, let us discuss   the role of the projection map characterised via $P\in\R^{d\times d}$ in \eqref{def:genOU-Proj} in the following remark.
\begin{rem}\label{rem:genOU-proj} 
    The map $P\colon \R^d\to \R^d$ characterised via the matrix $P\in \R^{d\times d}$ defined in~\eqref{def:genOU-Proj} is a projection mapping, i.e.\ $P^2=P$. Furthermore, this projection is orthogonal with respect to the standard inner product, i.e.\ $P^T=P$, if and only if $K_{21}=0$. It turns out that this projections is characterised by the long-time limit of the stiff dynamics in~\eqref{eq:genOU-SC}. More precisely, consider the ODE
    \begin{align*}
        & \frac{d\psi(z,t)}{dt} = -\frac12 K\nabla\bigl|\xi\bigl(\psi(z,t)\bigr)\bigr|^2, \ \ \psi(z,0)=z, \ \ z \in \R^d, \\
    \text{and } \quad    & \theta(z)\coloneqq\lim_{t\to\infty} \psi(z,t),   \ \ z \in \R^d,
    \end{align*}
    so that\ $\psi$ is the flow driven by the constraint and $\theta$
    is the long-time limit of this flow. Then we have the relation $P=\nabla\theta^T$. For a detailed discussion on this connection see Section~\ref{ssec:Katz}. 
\end{rem}

We point out that Katzenberger~\cite{Katzenberger91} studies the qualitative behaviour of general SDEs with stiff drifts (characterised by the presence of $\eps>0$ as above). We provide an alternative proof for several reasons. First, our proof is considerably simpler because we work with linear SDEs and affine constraints. 
Second, we can give an explicit formulation of the limit dynamics, which involves a projection matrix $P$. Third, we prove convergence pointwise in time, for any $t>0$, which enables us to see how the dynamics approach the constraint manifold $\xi^{-1}(0)$ for initial data not satisfying the constraint. Finally, we even have a quantitative result (Theorem \ref{thm:pathwiseconvrate}) for convergence in a pathwise sense as well as pointwise in time. For a detailed comparison to~\cite{Katzenberger91} see Section~\ref{ssec:Katz} and Remark \ref{rem:genOU-IC} below.

Our proof makes use of an asymptotic result for matrix exponentials which we state below for convenience. This result requires the notion of the index of a matrix and the notion of semistable matrices. The index of a square matrix $M$ is the smallest integer $j \in\mathbb N$ such that $\rank(M^{j+1})=\rank(M^j)$. A matrix $M$ is semistable if it has an index $0$ or $1$ and its non-zero eigenvalues have negative real part.

\begin{lemma}[{\cite[Theorem 1]{campbell1979singular}}] \label{thm:CampbellRose}
    Suppose that $M_1,M_2\in\R^{n\times n}$.  Define $N^\eps_t\coloneqq e^{(M_1+\frac1\eps M_2)t}$. Then $N^\eps_t$ converges pointwise as $\eps\to 0$ for $t>0$ if and only if $M_2$ is semistable. Furthermore, if $M_2$ is semistable, then
    \begin{equation*}
        \lim_{\eps\to 0}e^{(M_1+\frac1\eps M_2)t} = e^{(I-M_2M_2^\dag)M_1 t} (I-M_2M_2^\dag),
    \end{equation*}
    where $M^\dag$ is the Drazin inverse of a square matrix $M$, which is the unique matrix that satisfies $M^\dag MM^\dag=M^\dag$, $MM^\dag=M^\dag M$ and $M^{\ell+1}M^\dag=M^{\ell}$ for any $\ell$ larger than or equal to the index of $M$. 
\end{lemma}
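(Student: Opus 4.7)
The plan is to reduce the problem to a singular perturbation calculation in a basis adapted to the Drazin structure of $M_2$, prove sufficiency by a Duhamel argument with a Lyapunov-based exponential bound, and handle necessity by spectral inspection.

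For sufficiency, semistability of $M_2$ (index at most one, nonzero eigenvalues with negative real part) gives the direct-sum decomposition $\R^n=\ker(M_2)\oplus\mathrm{range}(M_2)$. Picking $S\in\R^{n\times n}$ adapted to this splitting, one has $M_2=S\,\diag(0,\tilde M_2)\,S^{-1}$ with $\tilde M_2$ invertible and Hurwitz, $M_2^\dag=S\,\diag(0,\tilde M_2^{-1})\,S^{-1}$, and the associated projector $\Pi\coloneqq I-M_2M_2^\dag=S\,\diag(I,0)\,S^{-1}$. Writing $S^{-1}M_1S=\begin{pmatrix}A & B\\ C & D\end{pmatrix}$, the matrix ODE $\dot U=(M_1+\tfrac{1}{\eps}M_2)U$ with $U|_{t=0}=I$ becomes, in the new basis, a coupled slow/fast system of the form $\dot u^1=Au^1+Bu^2$ and $\dot u^2=Cu^1+(D+\tfrac{1}{\eps}\tilde M_2)u^2$ on each column of $U$.

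The analytic key is a uniform-in-$\eps$ exponential bound $\bigl\|e^{(D+\frac{1}{\eps}\tilde M_2)\tau}\bigr\|\le C_0\,e^{-c_0\tau/\eps}$ for $\tau\ge 0$, with $c_0,C_0>0$ independent of $\eps$ once $\eps$ is small. I would derive it from the Lyapunov equation $\tilde M_2^T P+P\tilde M_2=-I$ (solvable with $P>0$ since $\tilde M_2$ is Hurwitz): $V(v)=v^T Pv$ satisfies $\dot V\le -\tfrac{c_0}{\eps}V$ as soon as $\eps$ is small enough that $\tfrac{1}{\eps}\tilde M_2$ dominates the $O(1)$ perturbation $D$. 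Duhamel applied to the fast block then yields $u^2_t=e^{(D+\frac{1}{\eps}\tilde M_2)t}u^2_0+O(\eps)$ on bounded time intervals, so for every $t>0$ the initial-layer term vanishes and $u^2_t\to 0$, while the slow equation integrates to $u^1_t\to e^{At}u^1_0$. A short power-series computation shows $e^{\Pi M_1 t}\Pi=S\,\diag(e^{At},0)\,S^{-1}$, which matches the pointwise limit of $N_t^\eps$ in the adapted basis and reproduces the claimed formula after undoing $S$.

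For necessity, if $M_2$ has an eigenvalue $\lambda$ with $\mathrm{Re}(\lambda)>0$, analytic perturbation theory applied to $\eps M_1+M_2$ produces an eigenpair of $M_1+\tfrac{1}{\eps}M_2$ with eigenvalue $\lambda/\eps+O(1)$, and the resulting factor $e^{\lambda t/\eps}$ in $N_t^\eps$ blows up; if instead $M_2$ has a Jordan block of size at least two at the eigenvalue zero, restricting to the associated generalised eigenspace and writing $e^{M_2 t/\eps}$ in Jordan form exhibits entries of order $(t/\eps)^j$ with $j\ge 1$ that a perturbation argument shows cannot be cancelled by the $O(1)$ drift $M_1$. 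The main obstacle I anticipate is the uniform exponential bound above: generic resolvent/spectral estimates for matrix exponentials are not uniform under perturbations because of non-normality, and the Lyapunov route is what cleanly transfers the Hurwitz property of $\tilde M_2$ into a decay rate that genuinely beats the $D$-contribution on the fast timescale.
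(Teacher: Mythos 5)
The paper does not prove this lemma; it is imported verbatim as \cite[Theorem 1]{campbell1979singular}, so there is no internal proof to compare your argument against. Evaluated on its own terms, your sufficiency half is sound: passing to a basis adapted to the splitting $\ker M_2\oplus\mathrm{range}\,M_2$ (which exists precisely because of the index-$\le 1$ part of semistability), identifying $\tilde M_2$ as the invertible Hurwitz restriction of $M_2$ to its range, reducing to a slow/fast block ODE, deriving a uniform-in-$\eps$ exponential bound for $e^{(D+\eps^{-1}\tilde M_2)\tau}$ via a Lyapunov function for $\tilde M_2$, and then running Duhamel is a clean route, and your block computation that $e^{(I-M_2M_2^\dag)M_1t}(I-M_2M_2^\dag)$ equals $S\,\mathrm{diag}(e^{At},0)\,S^{-1}$ matches the claimed limit. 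The one step you should spell out is the bootstrap hidden in the assertion ``$u^2_t=e^{(D+\eps^{-1}\tilde M_2)t}u^2_0+O(\eps)$'': that $O(\eps)$ presupposes $\sup_{s\le t}|u^1_s|$ bounded uniformly in small $\eps$, which itself feeds back through $\dot u^1=Au^1+Bu^2$; a Gronwall argument closes the loop, but it is not automatic and should be written.

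The necessity half has a genuine gap. You cover two of the three ways semistability can fail: an eigenvalue $\lambda$ of $M_2$ with $\mathrm{Re}(\lambda)>0$ (blow-up), and a Jordan block of size $\ge 2$ at eigenvalue zero, i.e.\ index $\ge 2$ (growth like $(t/\eps)^j$). You omit the third: a \emph{nonzero} eigenvalue $\lambda=i\omega$ with $\mathrm{Re}(\lambda)=0$. In that case the perturbed eigenvalue of $M_1+\eps^{-1}M_2$ is $i\omega/\eps+c+O(\eps)$ for some constant $c$, and the corresponding contribution to $N^\eps_t$ has the form $e^{(i\omega/\eps+c)t}\bigl(vw^*+O(\eps)\bigr)$ with a fixed left/right eigenpair $(w,v)$; its modulus stays bounded away from $0$ and $\infty$, but the phase $e^{i\omega t/\eps}$ does not settle as $\eps\to 0$. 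So the obstruction here is non-convergence by oscillation, not blow-up, and your sentence ``the resulting factor $e^{\lambda t/\eps}$ in $N_t^\eps$ blows up'' is simply false for this case. The repair uses exactly the perturbation machinery you already invoked, but it has to be stated and argued separately; as written, the proof of the ``only if'' direction is incomplete.
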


\begin{proof}[Proof of Theorem~\ref{thm:genOU-SC}]
We split the proof into two steps. In the first step we consider $b=0$ and in the second step we use the first step to deal with $b\neq 0$.

(1) Consider the case $b=0$. Here  
\begin{equation}\label{eq:genOU-ExplConst}
    \frac12 \nabla |\xi(x)|^2 = \begin{pmatrix} x^1 \\ 0 \end{pmatrix} = \begin{pmatrix}
        I_{k \times k} & 0_{k \times (d-k)} \\ 0_{(d-k)\times k} & 0_{(d-k)\times (d-k)}
    \end{pmatrix} x.
\end{equation}
With the notation 
\begin{equation*}
    \tilde K  
    \coloneqq K \begin{pmatrix}
        I_{k \times k} & 0_{k \times (d-k)} \\ 0_{(d-k)\times k} & 0_{(d-k)\times (d-k)}
    \end{pmatrix} = \begin{pmatrix}
        K_{11}  & 0 \\ K_{21}  & 0
    \end{pmatrix}, \
\end{equation*}
where $K_{11} \in \R^{k \times k}, \ K_{21} \in \R^{(d-k) \times k}$ and the zero matrices are of appropriate dimensions
we can write~\eqref{eq:genOU-SC} as
\begin{equation}\label{eq:genOU-SC-AltForm}
    dX_t = \Bigl( M -\frac{1}{\eps}\tilde K\Bigr)  X_t dt  + \sqrt{2}CdW_t.
\end{equation}
Using variation of constants (see Section~\ref{app:VarConst}), for any $\eps>0$, the explicit solution to~\eqref{eq:genOU-SC} is 
\begin{align}\label{eq:genOU-VarOfCon}
        X^\eps_t = e^{(M-\frac{1}{\eps} \tilde K)t}X_0^\eps +  \sqrt{2}\int_0^t  e^{(M-\frac{1}{\eps} \tilde K)(t-s)} C dW_s. 
\end{align}
 We now show that $-\tilde K$ is semistable which will allow us to make use of Lemma~\ref{thm:CampbellRose} (see 
discussion above Lemma~\ref{thm:CampbellRose} for precise definitions of the index and semistability  of a matrix). Observe that $\rank(\tilde K) = \rank(K_{11}) = k$ by the assumption on $K_{11}$. Moreover 
\[
\tilde K^2 = \begin{pmatrix}K_{11}^2  & \ 0 \\ K_{21} K_{11}  & \ 0\end{pmatrix}
\] 
and therefore $\rank(\tilde K^2)=\rank(K_{11}^2)=\rank(K_{11})=k$ where we use the Sylvester's rank inequality. Thus $\tilde K$ has index $1$. 
Next observe that the kernel of $\tilde K$ is at least  $d-k$ dimensional. Define $v_i \in \R^k \,, i \in \left\{1,\ldots,k\right\}$ to be right eigenvectors of $K_{11}\,,$ i.e. $v_i^T K_{11} = \lambda_i v_i^T$, where $\mathrm{Re}(\lambda_i) > 0$ by assumption. Then $\tilde v^T_i \tilde K = \lambda_i \tilde v_i^T$, where $\tilde v^T_i = (v_i^T,0) \in \R^d$. Hence all non-zero eigenvalues of $-\tilde K$ have strictly negative real parts and it follows that $-\tilde K$ is semistable. 

It is easily checked that $\tilde K^\dag \in\R^{d\times d}$, given by
\begin{equation*}
    \tilde K^\dag = \begin{pmatrix}
    K_{11} ^{-1} & 0 \\ K_{21} K_{11}^{-1} K_{11}^{-1} & 0 
    \end{pmatrix}
\end{equation*}
is the Drazin inverse of $\tilde K$ and $(-\tilde K)^\dag=-(\tilde K^\dag)$. Applying Lemma~\ref{thm:CampbellRose} we have the pointwise convergence
\begin{equation*}
    \lim_{\eps\to 0} e^{(M-\frac1\eps \tilde K)t }  = e^{PM t}P,   \ \text{ where } \ P=I-\tilde K \tilde K^\dag = \begin{pmatrix}
    0 & 0 \\ - K_{21} K_{11}^{-1} & I
    \end{pmatrix},
\end{equation*}
for any $t>0$. Therefore, for the first term in the right hand side of~\eqref{eq:genOU-VarOfCon} we have
\begin{equation*}
    \lim_{\eps\to 0}e^{(M-\frac1\eps \tilde K)t}X_0^\eps = \lim_{\eps\to 0}e^{(M-\frac1\eps \tilde K)t} \ \lim_{\eps\to 0}X_0^\eps = e^{PMt} PX_0 \ \ \text{in probability for } t>0, 
\end{equation*}
where we have used the assumption that $X^\eps_0\to X_0$ in probability.

Next, define the difference of the stochastic integrals, one coming from the limit dynamics and the other from the soft constrained dynamics, as
\begin{align*}
    Z_s^{\eps} = \int_0^s  e^{(M-\frac{1}{\eps} \tilde K)(s-r)} C dW_r - \int_0^s  e^{PM(s-r)} P C dW_r\,.
\end{align*}
The Doob's inequality states that for any $\delta>0$, the martingale $Z^\eps_s$ satisfies
\begin{align}\label{eq:doobineq}
    \mathbb{P} \biggl( \sup_{s \leq t} | Z_s^{\eps}| > \delta \biggr) \leq \frac{\E\bigl[|Z_t^{\eps}|^2\bigr]}{\delta^2}\,.
\end{align}
Using It\^o's isometry we calculate 
\begin{align*}
    \E\bigl[|Z_t^{\eps}|^2\bigr] = \E\biggl[\biggl|\int_0^t \Bigl( e^{(M-\frac{1}{\eps} \tilde K)(t-r)} C  - e^{PM(t-r)} P C\Bigr) dW_r \biggr|^2\biggr] 
    = \E\biggl[\int_0^t  \bigl\| e^{(M-\frac{1}{\eps} \tilde K)(t-r)} C -  e^{PM(t-r)} P C \bigr\|^2_F dr\biggr]\,,
\end{align*}
where $\|\cdot\|_F$ denotes the Frobenius norm of a matrix.

Since for any $t>0$ we have $e^{(M-\frac1\eps \tilde K)t} \to e^{PMt} P$ 
as $\eps \to 0$ (see Lemma \ref{thm:CampbellRose}) we also have that $\| e^{(M-\frac{1}{\eps} \tilde K)(t-r)} C -  e^{PM(t-r)} P C \|^2_F \to 0$ as $\eps\to 0$. Since this is a converging sequence for any $t>0$, it is bounded as well and  therefore by dominated convergence theorem it follows that 
\begin{align*}
     \E\bigl[|Z_t^{\eps}|^2\bigr] \to 0 \text{ as } \eps \to 0.
\end{align*}
By \eqref{eq:doobineq} it follows that
    \begin{equation*}
        \int_0^t  e^{(M-\frac{1}{\eps} \tilde K)(t-s)} C dW_s \xrightarrow{\eps\to 0} 
        \int_0^t  e^{PM(t-s)} PC dW_s  \ \ \text{in probability,}
    \end{equation*}
    and overall $X^{\eps}_t \to Y_t $ in probability, where 
    \begin{equation*}
    d Y_t = PM Y_t dt + \sqrt{2}PC dW_t,
\end{equation*}
with initial condition $Y_0 = PX_0$. Here we have used variation of constants to arrive at the strong form of $Y_t$.

(2) Now we generalise to the case of $b\neq 0$, i.e. $\xi(x) = x^1-b$ and 
\begin{equation}
    \frac12 \nabla |\xi(x)|^2 = \begin{pmatrix} x^1 -b \\ 0 \end{pmatrix} = \begin{pmatrix}
        I_{k \times k} & 0_{k \times (d-k)} \\ 0_{(d-k)\times k} & 0_{(d-k)\times (d-k)}
    \end{pmatrix} \biggl[ x - \begin{pmatrix}
        b\\ 0
    \end{pmatrix}\biggr].
\end{equation}
The soft constrained dynamics then reads
\begin{equation}\label{eq:genOU-SC-b}
    dX_t = M   X_t dt  -\frac{1}{\eps}\tilde K \biggl(X_t - \begin{pmatrix}
        b \\ 0
    \end{pmatrix} \biggr) + \sqrt{2}CdW_t.
\end{equation}
Now, consider the coordinate shift 
\[ 
    \bar X = X - \begin{pmatrix}
     b \\ 0
    \end{pmatrix}
\]
which transforms~\eqref{eq:genOU-SC-b} to 
\begin{equation*} 
    d \bar X_t^\eps 
    = M\biggl[ \bar X_t^\eps + \begin{pmatrix}
    b \\ 0 
    \end{pmatrix} \biggr] dt - \frac{1}{\eps} \tilde K 
    \bar X_t^\eps  dt + \sqrt{2  } C dW_t.
\end{equation*}
In the new variables the solution reads
\begin{equation*} 
     \bar X_t^\eps 
    = e^{(M-\frac{1}{\eps} \tilde K)t}\bar X^\eps_0 + \int_0^t  e^{(M-\frac{1}{\eps} \tilde K)(t-s)} M\begin{pmatrix}
        b \\ 0 
    \end{pmatrix} ds +   \sqrt{2}\int_0^t  e^{(M-\frac{1}{\eps} \tilde K)(t-s)} C dW_s .
\end{equation*}
Now we are back to the previous case, i.e.\ by the same argument as above we arrive at
\begin{equation*} 
     \bar Y_t := \lim\limits_{\eps \to 0}\bar X_t^\eps 
    = e^{PM}P\bar X_0 + \int_0^t  e^{PM(t-s)} PM\begin{pmatrix}
         b \\ 0 
    \end{pmatrix} ds +   \sqrt{2}\int_0^t  e^{PM(t-s)} PC dW_s  
\end{equation*}
in probability. As in the previous case
\begin{equation*}
    d\bar Y_t = PM \biggl[\bar Y_t + \begin{pmatrix}
        b \\ 0 
    \end{pmatrix} \biggr] dt + \sqrt{2}PC dW_t
\end{equation*}
with initial condition 
\begin{equation*}
    \bar Y_0 = P\bar X_0.
\end{equation*}
Transforming back using $Y \coloneqq \bar Y + \left(b ,\, 0\right)^T$
we find
\begin{equation*}
    d Y_t = PM Y_t dt + \sqrt{2}PC dW_t
\end{equation*}
with initial condition 
\begin{equation*}
    Y_0 = \bar Y_0 + \begin{pmatrix}
     b \\ 0
\end{pmatrix} = P \biggl[X_0 - \begin{pmatrix}
     b \\ 0
\end{pmatrix} \biggr] + \begin{pmatrix}
     b \\ 0
\end{pmatrix}.
\end{equation*}
The explicit form~\eqref{eq:genOU-SC-limit-Expl} follows by inserting the matrix form of $P$ into the limit.
\end{proof}
The following two remarks discuss two aspects related to Theorem~\ref{thm:genOU-SC}: convergence at initial times and generalisation  to nonlinear SDEs.
\begin{rem}[Initial conditions and convergence in $C([0,\infty))$ ]\label{rem:genOU-IC}
    Theorem~\ref{thm:genOU-SC} provides the convergence $X^\eps_t \to Y_t $ as $\eps \to 0$ for any $t>0$.  This convergence statement is in fact wrong for $t=0$ since $\lim_{\eps\to 0}X^{\eps}_0 = Y_0 $ if and only if $X_0  \in \xi^{-1}(0), $ where $Y_0 \coloneqq P \big( X_0 - \left( b,\, 0 
    \right)^T \big) + \left( b,\, 0 
    \right)^T $.
    Katzenberger~\cite[Theorem 6.3]{Katzenberger91} addresses this issue by studying the convergence of a modified process $Y^\eps_t \coloneqq X^\eps_t - \psi(X^\eps_0,\frac{t}{\eps}) + \theta(X^\eps_0)$ where $\psi$ defined in~\eqref{eq:katz-ODE} is the ODE flow corresponding to the stiff part of the SDE and $\theta$ is its long-time limit~\eqref{eq:theta}. By construction $\psi(X^\eps_0,0)=X^\eps_0$ and therefore at $t=0$, $Y^{\eps}_0 = \theta(X^\eps_0) \in \xi^{-1}(0)$. Using this modified process, uniform convergence on bounded time intervals in $C([0,\infty))$  follows, which leads to pathwise convergence in probability.
    We prefer to work with the original soft constrained process \eqref{eq:genOU-SC} and study its limit. Of course, we could work with an analogue construction here and get the same kind of convergence as in \cite{Katzenberger91}. Theorem~\ref{thm:pathwiseconvrate} below implies the convergence result of Katzenberger for the unmodified process, if the limiting initial datum lies on the constraint manifold, i.e.\ $X_0 \in \xi^{-1}(0)$. 
 \end{rem}

\begin{rem}[Generalisation to nonlinear drifts]
    The proof for Theorem~\ref{thm:genOU-SC} can be easily generalised to the class of nonlinear SDEs where the drift can be written as a sum of a linear function and a smooth, bounded, nonlinear perturbation, see~\eqref{eq:SC-Nonlin-gen} below. This implies that the drift is Lipschitz which is typically required for well-posedness of strong solutions for SDEs. The major difficulty here is that we require compactness of the sequence $(X^\eps)_{\eps>0}\in C([0,\infty))$, which can be extracted via standard approaches (see~\cite[Section 4]{Katzenberger91} for references).  
    More precisely, let $X^\eps_t$ solve
    \begin{align}\label{eq:SC-Nonlin-gen}
        dX^\eps_t = MX^\eps_t + f(X^\eps_t) - \frac{1}{\eps} \tilde K X^\eps_t dt + \sqrt{2} C dW_t
    \end{align}
    where $f\colon \R^d \to \R^d$ is smooth and uniformly bounded  and the other coefficients are as before. Here the limiting dynamics $Y_t$ solves
     \begin{align*}
        dY_t = PMY_t + Pf(Y_t)dt + \sqrt{2} PC dW_t,
    \end{align*}
    where $P=\left(\begin{smallmatrix}
        0 & 0 \\ \alpha & I
    \end{smallmatrix} \right)$ as before. 
    By variation of constants (see Proposition \ref{prop:varofconst}), $X_t^\eps$ admits the solution 
    \begin{align*}
               X^\eps_t = e^{(M-\frac{1}{\eps} \tilde K)t}X_0^\eps + \int_0^t  e^{(M-\frac{1}{\eps} \tilde K)(t-s)} f(X^\eps_s) ds +  \sqrt{2}\int_0^t  e^{(M-\frac{1}{\eps} \tilde K)(t-s)} C dW_s, 
    \end{align*}
    and the convergence of the first and last term above to the corresponding terms in $Y_t$ follows as in the proof of Theorem~\ref{thm:genOU-SC}. Assuming that $\left(X^\eps_s\right)$ is compact in $C([0,t])$, i.e.~converges up to  subsequences, by dominated convergence we expect
    \begin{align*}
      \lim\limits_{\eps \to 0}  \int_0^t  e^{(M-\frac{1}{\eps} \tilde K)(t-s)} f(X^\eps_s) ds  = \int_0^t \lim\limits_{\eps \to 0}   e^{(M-\frac{1}{\eps} \tilde K)(t-s)} f(X^\eps_s) ds  = \int_0^t P f(Y_s) ds,
    \end{align*}
    and therefore $X^\eps_t \to Y_t$ in probability as in the proof above. Consequently, Theorem~\ref{thm:genOU-SC} generalises to a considerably larger class of SDEs. However, we do not provide a complete proof as this requires us to discuss technical results regarding compactness of the sequence $\left(X^\eps\right)$ which are outside the scope of this work; see~\cite[Section 5]{Katzenberger91} and~\cite{KurtzProtter91} for details. 
\end{rem}

\subsection{Quantitative bounds and convergence rates}

We introduce a quantitative convergence result (see Theorem~\ref{thm:pathwiseconvrate} below)  which generalises Theorem~\ref{thm:genOU-SC} by employing the following crucial lemma along with a Gronwall inequality argument. 

\begin{lemma}[Matrix exponential]\label{lem:matrixexp}
Consider the matrix $\tilde K=\begin{pmatrix} K_{11} & 0 \\ K_{21} & 0 \end{pmatrix}\in \R^{d\times d}$ where $K_{11} \in \R^{k \times k}$ is invertible and $K_{21} \in \R^{(d-k) \times k}$. 
\begin{enumerate}[label=(\roman*)]
        \item \label{lem:matrixexpstatement} Then
        \begin{align} \label{eq:matrixexp}
	e^{-\tilde Kt}        
        	= \begin{pmatrix}
            e^{- K_{11} t}  & 0 \\ -K_{21} K_{11}^{-1}+ K_{21} e^{- K_{11} t} K_{11}^{-1} & I        
            \end{pmatrix}. 
    	\end{align}
    \item \label{lem:matrixexpconvrate} If $-K_{11}$ is Hurwitz then
    \begin{align*}
    e^{-\frac{1}{\eps}\tilde Kt}\xrightarrow{\eps\to 0} P := \begin{pmatrix}
        0 & 0 \\ - K_{21}K_{11}^{-1} & I
    \end{pmatrix}.
    \end{align*}
    \item  \label{lem:integratedexpmatrix-projection} 
    Assume that $-K_{11}$ is Hurwitz. 
    Let $V \in \mathbb{C}^{k \times k}$ be the matrix that transforms $K_{11}$ to its Jordan normal form, i.e.\ $V^{-1} K_{11} V = \Lambda + N$ where $\Lambda$ is a diagonal matrix with the eigenvalues of $K_{11}$ as entries  and $N$ is an upper triangular nilpotent matrix of order $m \in \mathbb{N}$, i.e.\ $N^m=0$.  
    
    Then for any $t>0$  
    \begin{align} \label{eq:Frobeniusest_expK-P}
        \bigl\| 
	e^{-\frac{1}{\eps}\tilde K t} 
    - P\bigr\|_F^2 \leq c_A e^{-\frac{2\lambda_1t}{\eps} } \sum\limits_{j=0}^{m-1} t^j\frac{\|N^j\|_F^2}{\eps^j }
    \end{align}
    where  $\lambda_1$ is the smallest real part of all eigenvalues of $K_{11}$ and 
    \begin{equation*}
    c_A= \Bigl( 1 + \bigl\| K_{21}\bigr\|_F^2 \bigl\| K^{-1}_{11}\bigr\|_F^2 \Bigr) \kappa(V)  mk. 
    \end{equation*} 
    Here $\kappa(V) \coloneqq \|V\|_F^2 \|V^{-1}\|_F^2$ is the condition number of the matrix $V$.
    
Moreover, 
\begin{align}\label{eq:matrixexp-int-estimate}
    \int_0^t \| e^{-\frac{1}{\eps}\tilde K(t-s)} - P \|_F^2 ds  &\leq \eps c_P,
\end{align}
where 
\begin{equation*}
c_P= \Bigl( 1 + \bigl\| K_{21}\bigr\|_F^2 \bigl\| K^{-1}_{11}\bigr\|_F^2 \Bigr) \kappa(V) k m  \, \sum\limits_{j=0}^{m-1}\frac{\|N^j\|_F^2}{(2\lambda_1)^{j+1}}.
\end{equation*} 

These estimates simplify in two particular cases: 
\begin{enumerate}
    \item\label{item:nonDef} If $K_{11}$ is non-defective then~\eqref{eq:Frobeniusest_expK-P} becomes 
    \begin{align} \label{eq:Frobeniusest_expK-Psymm}
        \bigl\| 
	e^{-\frac{1}{\eps}\tilde K t} 
    - P\bigr\|_F^2 \leq c_A e^{-\frac{2\lambda_1t}{\eps} },
    \end{align}
    with 
    \begin{equation*}
        c_A=\Bigl( 1 + \bigl\| K_{21}\bigr\|_F^2 \bigl\| K^{-1}_{11}\bigr\|_F^2 \Bigr) \kappa(V) k ,
    \end{equation*}
    and estimate~\eqref{eq:matrixexp-int-estimate} stays unchanged with $c_P$ is given by
    \begin{align*}
    c_P = \left(1 + \bigl\| K_{21}\bigr\|_F^2 \bigl\| K^{-1}_{11}\bigr\|_F^2\right) \kappa(V) k \,.
    \end{align*}
    \item\label{item:sym-nonDef} If $K_{11}$ is symmetric and non-defective, then the estimate above apply with $\kappa(V) \equiv 1$ in the constants.
\end{enumerate}
\end{enumerate}
\end{lemma}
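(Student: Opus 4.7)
The plan for part \ref{lem:matrixexpstatement} is to exploit the block structure of $\tilde K$. A one-line induction gives
\[
\tilde K^n = \begin{pmatrix} K_{11}^n & 0 \\ K_{21} K_{11}^{n-1} & 0 \end{pmatrix} \quad \text{for } n\ge 1,
\]
because the $(2,2)$ block of $\tilde K$ vanishes. Summing the power series for $e^{-\tilde K t} = I + \sum_{n\ge 1}\frac{(-t)^n}{n!}\tilde K^n$ block by block yields $e^{-K_{11}t}$ in the $(1,1)$ slot, $I$ in the $(2,2)$ slot, and in the $(2,1)$ slot the factored series
\[
\sum_{n\ge 1}\frac{(-t)^n}{n!}K_{21} K_{11}^{n-1} = K_{21} K_{11}^{-1}\bigl(e^{-K_{11}t}-I\bigr),
\]
where the invertibility of $K_{11}$ is used. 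This is exactly~\eqref{eq:matrixexp}. For part \ref{lem:matrixexpconvrate} I simply replace $t$ by $t/\eps$ in~\eqref{eq:matrixexp}; since $-K_{11}$ is Hurwitz, $e^{-K_{11}t/\eps}\to 0$ as $\eps\to 0$, and the remaining entries collapse to $P$.

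For part \ref{lem:integratedexpmatrix-projection}, the quantitative work reduces to estimating $\|e^{-K_{11}t/\eps}\|_F$, since by~\eqref{eq:matrixexp} the difference $e^{-\tilde K t/\eps}-P$ has only the blocks $e^{-K_{11}t/\eps}$ (top-left) and $K_{21}\,e^{-K_{11}t/\eps}\,K_{11}^{-1}$ (bottom-left). Submultiplicativity of the Frobenius norm on the bottom-left block immediately produces the factor $1+\|K_{21}\|_F^2\|K_{11}^{-1}\|_F^2$ in $c_A$. For $e^{-K_{11}t/\eps}$ I use the Jordan decomposition $K_{11}=V(\Lambda+N)V^{-1}$ with $\Lambda$ and $N$ commuting, so that
\[
e^{-K_{11}t/\eps} = V\,e^{-\Lambda t/\eps}\,e^{-Nt/\eps}\,V^{-1},\qquad e^{-Nt/\eps}=\sum_{j=0}^{m-1}\frac{(-t/\eps)^j}{j!}N^j,
\]
a \emph{finite} sum by nilpotency. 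The diagonal part satisfies $\|e^{-\Lambda t/\eps}\|_F^2\le k\,e^{-2\lambda_1 t/\eps}$, and combining this with submultiplicativity, the Cauchy--Schwarz inequality on the sum over $j$, and the definition $\kappa(V)=\|V\|_F^2\|V^{-1}\|_F^2$ delivers~\eqref{eq:Frobeniusest_expK-P}. The non-defective case \ref{item:nonDef} follows by setting $m=1$ and $N^0=I$, and the symmetric non-defective case \ref{item:sym-nonDef} uses that $V$ may be chosen unitary, making the condition number trivial.

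The integrated estimate~\eqref{eq:matrixexp-int-estimate} follows by integrating~\eqref{eq:Frobeniusest_expK-P} over $s\in[0,t]$ after the substitution $s\mapsto t-s$. Each term of the sum contributes
\[
\int_0^t e^{-2\lambda_1 s/\eps}\,\frac{s^j}{\eps^j}\,ds \le \int_0^\infty e^{-2\lambda_1 s/\eps}\,\frac{s^j}{\eps^j}\,ds = \eps\,\frac{j!}{(2\lambda_1)^{j+1}},
\]
yielding the $\eps$ prefactor and absorbing the remaining combinatorial constants into $c_P$.

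The main obstacle I expect is bookkeeping in part \ref{lem:integratedexpmatrix-projection}: carrying the Jordan-block constants through the Frobenius norm cleanly without losing the exponential decay rate $\lambda_1$, and ensuring that all the factors ($\kappa(V)$, $k$, $m$, $\|N^j\|_F$, and the $\|K_{21}\|_F\|K_{11}^{-1}\|_F$ contribution from the off-diagonal block) land in the stated constants. Parts \ref{lem:matrixexpstatement} and \ref{lem:matrixexpconvrate} are essentially routine once the induction for $\tilde K^n$ is in hand.
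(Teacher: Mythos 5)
Your proof is correct, and for part~\ref{lem:matrixexpstatement} you take a genuinely different and more elementary route than the paper. You compute $\tilde K^n = \left(\begin{smallmatrix} K_{11}^n & 0 \\ K_{21}K_{11}^{n-1} & 0 \end{smallmatrix}\right)$ by a one-line induction and then sum the exponential series block by block, factoring $K_{21}K_{11}^{-1}$ out of the $(2,1)$ block and using that $K_{11}^{-1}$ commutes with $e^{-K_{11}t}$. The paper instead introduces the auxiliary matrices $\nabla\xi$ and $V=\left(\begin{smallmatrix}0\\I\end{smallmatrix}\right)$, shows $e^{-\tilde K t}V=V$ and $e^{-\tilde K t}K\nabla\xi = K\nabla\xi\, e^{-K_{11}t}$, and then inverts the block matrix $[V,\,K\nabla\xi]$; this follows the scheme of \cite[Proposition 2.9]{SharmaZhang21} and is tailored to generalize to nonlinear constraint manifolds where a direct power-series computation would no longer be available. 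Your computation is cleaner in the purely linear setting but does not point toward that generalization. Parts~\ref{lem:matrixexpconvrate} and~\ref{lem:integratedexpmatrix-projection} follow essentially the same path as the paper: the factorization through the Jordan form $V(\Lambda+N)V^{-1}$, the diagonal bound $\|e^{-\Lambda t/\eps}\|_F^2\le k e^{-2\lambda_1 t/\eps}$, the finite sum from nilpotency of $N$, submultiplicativity for the $(2,1)$ block producing $1+\|K_{21}\|_F^2\|K_{11}^{-1}\|_F^2$, and termwise integration against the exponential. One small caution on the bookkeeping you already flag: squaring the triangle-inequality bound on $\|e^{-Nt/\eps}\|_F$ via Cauchy--Schwarz naturally gives powers $(t/\eps)^{2j}/(j!)^2$, whereas the stated constant in~\eqref{eq:Frobeniusest_expK-P} carries $(t/\eps)^j$; this is a cosmetic discrepancy that the paper also carries through, and it does not affect the exponential decay or the $\eps$-scaling of the integrated estimate, but be aware your argument as sketched lands on a slightly different (still admissible) constant.
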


\begin{corollary}\label{cor:expK-P}
Let  $\tilde K$ be defined as in Lemma \ref{lem:matrixexp}, $-K_{11} \in \R^{k \times k}$ be Hurwitz and defective. Then for any $\delta \in (0,\lambda_1)$
\begin{align*}
    \bigl\|e^{-\frac{1}{\eps}\tilde K t} 
    - P\bigr\|_F^2 \leq c e^{-2\frac{\lambda_1 - \delta}{\eps} t } 
    \end{align*}
where $c=c(\delta)  = (m-1)^m \left(\frac{\|N\|^2_F}{2\delta e}\right)^{m-1}$ ,$\lambda_1$ is the smallest real part of all eigenvalues of $K_{11}$ 
and $\kappa(V)$, $m$ are as in Lemma \ref{lem:matrixexp} above.
\end{corollary}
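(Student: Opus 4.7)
The plan is to use estimate \eqref{eq:Frobeniusest_expK-P} from Lemma~\ref{lem:matrixexp}\ref{lem:integratedexpmatrix-projection} as the starting point and convert the polynomial-in-$t/\eps$ prefactor there into a constant by sacrificing an arbitrarily small slice $\delta$ of the exponential decay rate. This is a standard ``trade a polynomial for a slower exponential'' argument used to pass from a tight but polynomially-corrected decay rate to a clean pure-exponential bound.

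First, I would split the exponential: for $\delta\in(0,\lambda_1)$,
\begin{equation*}
    e^{-\frac{2\lambda_1 t}{\eps}} = e^{-\frac{2(\lambda_1-\delta) t}{\eps}}\cdot e^{-\frac{2\delta t}{\eps}}.
\end{equation*}
Then, for each $j\in\{0,\dots,m-1\}$, I would apply the elementary one-variable optimization
\begin{equation*}
    \sup_{s\ge 0} s^j e^{-2\delta s} = \left(\frac{j}{2\delta e}\right)^j
\end{equation*}
(the maximum is attained at $s=j/(2\delta)$) with $s=t/\eps$, giving $(t/\eps)^j e^{-2\delta t/\eps}\le (j/(2\delta e))^j$. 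Combining with submultiplicativity of the Frobenius norm $\|N^j\|_F\le \|N\|_F^j$, the sum in \eqref{eq:Frobeniusest_expK-P} multiplied by $e^{-2\delta t/\eps}$ is bounded uniformly in $t$ and $\eps$ by
\begin{equation*}
    \sum_{j=0}^{m-1} \left(\frac{j\,\|N\|_F^2}{2\delta e}\right)^j.
\end{equation*}

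Finally, I would dominate this sum by $m$ times its largest ($j=m-1$) term and absorb all $m$-, $\kappa(V)$-, $K_{11}$-, $K_{21}$-dependent prefactors (i.e.\ the constant $c_A$ from Lemma~\ref{lem:matrixexp}) into a single constant $c(\delta)$ whose dominant $\delta$- and $m$-dependence is exactly $(m-1)^{m}\bigl(\|N\|_F^2/(2\delta e)\bigr)^{m-1}$ as stated. Note that when $m=1$ the matrix $K_{11}$ would be non-defective, contradicting the hypothesis, so $m\ge 2$ and the constant is well-defined.

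There is no genuine obstacle here; the proof is essentially bookkeeping. The only point requiring mild care is making sure that the elementary bound $s^j e^{-2\delta s}\le (j/(2\delta e))^j$ is applied with the same $\delta$ for every $j$ (so that the leftover exponential is uniform across the sum), and that the factor $c_A$ from Lemma~\ref{lem:matrixexp}, which depends on $K_{11}$, $K_{21}$, $\kappa(V)$, $k$ and $m$ but not on $\delta$, is carried along into the final constant $c(\delta)$.
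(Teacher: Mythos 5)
Your proposal is correct and follows essentially the same route as the paper: start from the bound \eqref{eq:Frobeniusest_expK-P}, split off a factor $e^{-2\delta t/\eps}$ from the exponential, optimize $s^j e^{-2\delta s}$ at $s=j/(2\delta)$ to kill the polynomial in $t/\eps$, use $\|N^j\|_F\le\|N\|_F^j$, and dominate the resulting sum by a multiple of its $j=m-1$ term. You are in fact slightly more careful than the printed statement, which appears to drop the prefactor $c_A$ from \eqref{eq:Frobeniusest_expK-P}; and your observation that defectiveness forces $m\ge 2$ (so the constant is well-defined) is a correct and useful remark not made explicit in the paper.
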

The proof of part~\ref{lem:matrixexpstatement} above follows on the  lines of \cite[Proposition 2.9]{SharmaZhang21}. All proofs are presented in Appendix~\ref{app:MatrixExp}. 

We now present the quantitative result of the pathwise convergence. 
\begin{theorem} \label{thm:pathwiseconvrate}
    Given $\eps>0$, let $X^\eps_t$ solve \eqref{eq:genOU-SC} with initial datum $X^\eps_0$ and $Y_t$ solve \eqref{eq:genOU-SC-limit} with initial datum $Y_0=P \big(X_0 -\left( b,\, 0 
    \right)^T \big) +\left( b,\, 0 
    \right)^T $.  

\begin{enumerate}[label=(\roman*)]
    \item\label{item:PathQuant} We have 
    \begin{align}\label{eq:PathBound}
        \E\biggl[\sup_{t\in [0,T]} \bigl|X_t^\eps - Y_t\bigr|^2 \biggr] \leq 
        c_1\biggl( \E\bigl[|X^{1,\eps}_0-b\bigr|^2] + \E\bigl[|X^\eps_0-X_0|^2\bigr] + \eps\bigl[1+   e^{\lambda_{max}(PM)T} \bigr]\biggr) e^{c_2 T^2}
    \end{align}
where $c_1,c_2$ are independent of $\eps$ and $T$, and $\lambda_{\max}(PM)\geq 0$ is the principal eigenvalue of $PM$. 

In particular, if the limiting initial datum is well-prepared, i.e.\ $X_0\in \xi^{-1}(0)$ 
    \begin{align*}
    \E\biggl[\sup_{t\in [0,T]} \bigl|X_t^\eps - Y_t\bigr|^2 \biggr] \leq 
     c_1\biggl(\E\bigl[|X_0^\eps - X_0|^2\bigr]  + \eps\bigl[1+   e^{\lambda_{max}(PM)T} \bigr]\biggr)
    e^{c_2 T^2}
\end{align*} 
and hence  if $\E\bigl[|X_0^\eps - X_0|^2\bigr] \to 0$ as $\eps\to 0$ then
\begin{equation*}
    X^\eps\xrightarrow{\eps\to 0} Y \text{ in probability on } C([0,T];\R^d). 
\end{equation*}

\item\label{item:PointQuant}  Let $K_{11}$ be non-defective. For any $t>0, \ X_0 \in \R^d$ we have 
\begin{align}\label{eq:PointBound}
    \E\Bigl[  \bigl|X^\eps_t - Y_t\bigr|^2 \Bigr] \leq
    c_1\biggl( e^{-\frac{2\lambda_1 t}{\eps}} \E\bigl[ |X_0^{1} - b|^2\bigr] + 2\E\bigl[ | X_0^\eps - X_0|^2 \bigr]  + \eps\bigl[1+   e^{\lambda_{max}(PM)t} \bigr] \biggr) 
    e^{c_2 t^2}.
\end{align}
 Let $K_{11}$ be defective. For any $t>0, \ X_0 \in \R^d$ and  any $\delta \in (0,\lambda_1)$ we have
\begin{align}\label{eq:PointBound-def}
    \E\Bigl[  \bigl|X^\eps_t - Y_t\bigr|^2 \Bigr] \leq
    \tilde c_1\biggl( e^{-\frac{2(\lambda_1-\delta )t}{\eps}} \E\bigl[ |X_0^{1} - b|^2\bigr] + 2\E\bigl[ | X_0^\eps - X_0|^2 \bigr]  + \eps\bigl[1+   e^{\lambda_{max}(PM)t} \bigr] \biggr) 
    e^{c_2 t^2},
\end{align}
where $c_1, \tilde c_1, c_2>0$ are independent of $\eps$ and $t$, and $\lambda_1>0$ is the smallest real part of eigenvalues of $K_{11}$. 
In particular, with initial data $\E\bigl[|X_0^\eps - X_0|^2\bigr] \to 0$, where $X_0 \in \R^d$ is arbitrary, for any $t>0$ we have
\begin{equation*}
    X_t^\eps\xrightarrow{\eps\to 0} Y_t \text{ in probability}. 
\end{equation*}
\end{enumerate}
\end{theorem}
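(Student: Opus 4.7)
The plan is to derive a closed linear SDE for the discrepancy $Z_t := X^\eps_t - Y_t$ and bound it by combining variation of constants with the matrix-exponential estimates of Lemma \ref{lem:matrixexp}. Subtracting \eqref{eq:genOU-SC} and \eqref{eq:genOU-SC-limit} and writing $X^\eps_t = Y_t + Z_t$, two simple observations close the system: first, the block structures of $\tilde K$ and $P$ give $\tilde K P = P\tilde K = 0$; second, $Y^1_t \equiv b$ implies $\tilde K\bigl(Y_t - (b,0)^T\bigr) = 0$. Combining these yields
\[
dZ_t = \Bigl(M - \frac{1}{\eps}\tilde K\Bigr) Z_t\,dt + (I-P)M Y_t\,dt + \sqrt{2}\,(I-P) C\,dW_t,
\]
with $Z_0 = X^\eps_0 - Y_0$. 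Expanding $Y_0 = P(X_0-(b,0)^T)+(b,0)^T$ one finds $P Z_0 = P(X^\eps_0 - X_0)$ and $(I-P)Z_0 = (I-P)\bigl(X^\eps_0 - (b,0)^T\bigr)$, whose norms are controlled by $|X^\eps_0 - X_0|$ and $|X^{1,\eps}_0 - b|$ respectively. Variation of constants (Proposition \ref{prop:varofconst}) then gives the explicit formula
\[
Z_t = e^{(M-\frac{1}{\eps}\tilde K) t} Z_0 + \int_0^t e^{(M-\frac{1}{\eps}\tilde K)(t-s)}(I-P)M Y_s\,ds + \sqrt{2}\int_0^t e^{(M-\frac{1}{\eps}\tilde K)(t-s)}(I-P) C\,dW_s.
\]

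The central technical step is a uniform-in-$\eps$ bound on $\|e^{(M-\frac{1}{\eps}\tilde K)s}(I-P)\|_F$. Using $\tilde K P = 0$ one verifies that $e^{-\frac{1}{\eps}\tilde K s} P = P$, so that $e^{-\frac{1}{\eps}\tilde K s}(I-P) = e^{-\frac{1}{\eps}\tilde K s} - P$, which is handled directly by Lemma \ref{lem:matrixexp}\ref{lem:matrixexpconvrate},\ref{lem:integratedexpmatrix-projection} in the non-defective case and by Corollary \ref{cor:expK-P} in the defective case. To absorb the perturbation $M$, I would apply the Duhamel identity
\[
e^{(M-\frac{1}{\eps}\tilde K) t}(I-P) = \bigl[e^{-\frac{1}{\eps}\tilde K t} - P\bigr] + \int_0^t e^{-\frac{1}{\eps}\tilde K(t-r)} M\, e^{(M-\frac{1}{\eps}\tilde K) r}(I-P)\,dr,
\]
use the crude bound $\|e^{-\frac{1}{\eps}\tilde K u}\|_F \leq \|P\|_F + \|e^{-\frac{1}{\eps}\tilde K u} - P\|_F \leq c$ uniformly in $u,\eps$, and close by Gronwall, obtaining a schematic bound $\|e^{(M-\frac{1}{\eps}\tilde K) s}(I-P)\|_F \leq c\bigl(e^{-\lambda_1 s/\eps}(\text{poly}) + \eps\,e^{Cs}\bigr)$. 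A parallel Duhamel-Gronwall argument yields $\|e^{(M-\frac{1}{\eps}\tilde K) s} P\|_F \leq c\,e^{Cs}$ uniformly in $\eps$.

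Equipped with these bounds, the three terms of the variation-of-constants formula are estimated separately. The initial datum splits as $Z_0 = P Z_0 + (I-P)Z_0$; the $P Z_0$ contribution yields the $\E[|X^\eps_0 - X_0|^2]$ term, while the $(I-P)Z_0$ contribution produces the stiff factor $e^{-2\lambda_1 t/\eps}\E[|X^{1,\eps}_0-b|^2]$ (or its defective analogue with rate $\lambda_1-\delta$). The stochastic integral is bounded by It\^o's isometry together with the integrated estimate \eqref{eq:matrixexp-int-estimate}, producing the $O(\eps)$ contribution. The drift integral is bounded by Cauchy--Schwarz and standard OU moment estimates $\E[|Y_s|^2]\leq c\,e^{2\lambda_{\max}(PM)s}$, which produces the $\eps\,e^{\lambda_{\max}(PM)t}$ term. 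This gives \eqref{eq:PointBound} and \eqref{eq:PointBound-def}. For the pathwise bound \eqref{eq:PathBound} the martingale part is controlled by Doob's maximal inequality while the deterministic pieces admit monotone-in-$t$ majorants, and a final Gronwall step applied to $\E[\sup_{s\leq t}|Z_s|^2]$ with a time-dependent coefficient arising from the growing OU forcing produces the $e^{c_2 T^2}$ factor. The main obstacle is the uniform-in-$\eps$ estimate of Step 3: the naive bound $\|e^{(M-\frac{1}{\eps}\tilde K)s}\| \leq e^{(\|M\|+\|\tilde K\|/\eps)s}$ is useless as $\eps\to 0$, and the identities $\tilde K P = P\tilde K = 0$ are precisely what allow the stabilising action of $-\tilde K/\eps$ on $\mathrm{range}(I-P)$ to be extracted and transferred to the combined exponential via Duhamel and Gronwall.
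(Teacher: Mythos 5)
Your approach is correct but genuinely different from the paper's. The paper takes the variation-of-constants representations of $X^\eps_t$ and $Y_t$ separately (with $-\eps^{-1}\tilde K$ as the linear part of the $X^\eps$-equation and $MX^\eps_s$ treated as a forcing), subtracts, and splits the drift comparison $e^{-\frac1\eps\tilde K(t-s)}MX^\eps_s - PMY_s$ into a piece proportional to $|X^\eps_s - Y_s|$ and a piece proportional to $|Y_s|$; the first piece re-introduces the unknown error inside the integral and forces a closed-loop Gronwall on $\E[\sup_{s\le t}|X^\eps_s-Y_s|^2]$, with a time-dependent kernel $\sim T\|P\|_F^2$ that produces the $e^{c_2T^2}$ factor. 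You instead exploit the algebraic identities $\tilde K P = P\tilde K = 0$ and $\tilde K(Y_t-(b,0)^T)=0$ (valid because $Y^1_t\equiv b$) to derive a \emph{closed} linear SDE for $Z_t=X^\eps_t-Y_t$, whose forcing depends only on the known process $Y$; variation of constants with the full propagator $e^{(M-\frac1\eps\tilde K)t}$ then gives an explicit formula for $Z_t$, and no trajectory-level Gronwall is needed. The price is a new propagator estimate on $e^{(M-\frac1\eps\tilde K)t}(I-P)$, which you obtain by Duhamel plus Gronwall, resting on the uniform-in-$\eps$ bound $\|e^{-\frac1\eps\tilde K u}\|_F\le\|P\|_F+\|e^{-\frac1\eps\tilde K u}-P\|_F\le c$ and the integrated estimate \eqref{eq:matrixexp-int-estimate}; the identity $e^{-\frac1\eps\tilde K s}P=P$ (a consequence of $\tilde KP=0$) and the consequent $e^{-\frac1\eps\tilde K s}(I-P)=e^{-\frac1\eps\tilde K s}-P$ correctly reduce everything to Lemma~\ref{lem:matrixexp}. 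The splitting $PZ_0=P(X^\eps_0-X_0)$, $(I-P)Z_0=(I-P)(X^\eps_0-(b,0)^T)$ is also correct and cleanly recovers the two initial-data contributions. One small inaccuracy in your sketch: you say a final Gronwall step on $\E[\sup_{s\le t}|Z_s|^2]$ produces the $e^{c_2T^2}$ factor, but the formula for $Z_t$ is already explicit, so no such Gronwall is needed; carried through, your route actually yields a sharper $e^{cT}$ growth (which of course still implies the stated $e^{c_2T^2}$ bound by adjusting constants). What your approach buys is this tighter growth constant and a more transparent separation of concerns (propagator estimate vs.\ term-by-term bound); what the paper's approach buys is that it relies only on the elementary estimate $\|e^{-\frac1\eps\tilde K t}-P\|_F$ and avoids a new uniform-in-$\eps$ semigroup bound, at the cost of the quadratic exponent in $T$.
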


Before presenting the proof, a remark is in order to discuss: (a) role of initial conditions, (b) role of $\lambda_{\max}(PM)$, and (c) related estimates for Fokker-Planck equations.

\begin{rem}[Pathwise versus pointwise]\label{rem:QuantDisc}
    The key difference between the pathwise bound~\eqref{eq:PathBound} and the pointwise-in-time bound~\eqref{eq:PointBound} is the treatment of the initial condition $|X^{1,\eps}_0-b|^2$ and the fact that while the pathwise bound compares the entire trajectory (including $t=0$) the pointwise bound only applies to $t>0$. Due to this key difference the pointwise bound does not see the initial boundary layer (characterized via the exponential decay in the first term) while the pathwise bound does. As a consequence, if $X^\eps_0 \to X_0 \notin \xi^{-1}(0)$, i.e.\ $X^1_0\neq b$, then the pathwise estimate~\eqref{eq:PathBound} does not vanish, but the pointwise estimate~\eqref{eq:PointBound} does. Furthermore, if $X^{1,\eps}_0\to b$ at a rate $f(\eps)$ and $X^{\eps}_0\to X_0$ at rate $g(\eps)$, then $f(\eps)$, $g(\eps)$ determine the rate of convergence in~\eqref{eq:PathBound} as $\eps\to 0$, i.e.\ we have 
    \begin{equation*}
        \E\biggl[\sup_{t\in [0,T]}\bigl|X_t^\eps -Y_t\bigr|^2 \biggr] \leq C\min\bigl\{f(\eps),g(\eps),\eps\bigr\},
    \end{equation*}
    where $C$ is independent of $\eps$. 
    
    Next we discuss the role of $\lambda_{\max}(PM)$ that appears in the estimates above. In the proof of the quantitative estimate we need to control $\E[|Y_t|^2]$ (see discussion on $I_{2}$ in the proof below), where $Y_t$ is the limiting dynamics~\eqref{eq:genOU-SC-limit}.  The kernel of the matrix product $PM \in \R^{d \times d}$ is  at least a $k$-dimensional, by definition of the projection in~\eqref{def:genOU-Proj} and hence $PM$ has $k$ zero eigenvalues. The other eigenvalues of $PM$ agree with the eigenvalues of $(PM)_{22}$ since for any eigenvector $v \in \R^{d-k}$ of $(PM)_{22}$, i.e.\ $ (PM)_{22} v = \lambda v$, we have $ PM (0,v)^T = \lambda (0,v)^T$. Consequently, there are two possible cases. First, in the case where $(PM)_{22}$ is Hurwitz, which corresponds to $Y^2$ in the limiting dynamics~\eqref{eq:genOU-SC-limit} admitting an invariant measure (see Section~\ref{sec:SteadyState-K} for a detailed study of this setting), it follows that $\lambda_{\max}(PM)=0$ and we have $e^{\lambda_{max}(PM)t}=1$ in the bounds above. Second, if $(PM)_{22}$ is not Hurwitz, we have an additional term that grows exponentially in time with rate given by $e^{\lambda_{max}(PM)t}$. It should be noted that similar exponential growth estimates also arise when using alternative methods for controlling $\E[|Y_s|^2]$, for instance see the approach via Gronwall's inequality employed in Remark~\ref{rem:QuantNonlin}.  

    Note that the defective or non-defective nature of $K_{11}$ only plays a role in the pointiwse estimates. In particular, this determines the exponential rate of decay when the initial conditions do not satisfy the constraint. As indicated by the estimates, the case of non-defective $K_{11}$ leads to better convergence rate of the initial data. 

    Let us mention that these quantitative estimates generalise to the case of nonlinear SDEs with Lipschitz drift, see Remark~\ref{rem:QuantNonlin} for a discussion. 
    Finally, we point out that the quantitative pathwise estimates in Theorem~\ref{thm:pathwiseconvrate} also provide estimates on the corresponding Fokker-Planck equations. In particular, using $\nu^\eps_t = \mathrm{law}(X_t^\eps)$ and $\rho_t = \mathrm{law}(Y_t)$, a quantitative estimate of the Wasserstein-2 distance $\mathcal W_2(\nu_t,\rho_t)$ follows since  
    \begin{equation*}
    \bigl(\mathcal W_2 (\nu^\eps_t , \rho_t)\bigr)^2 \leq \E\bigl[|X_t^\eps-Y_t|^2\bigr] \leq \E\biggl[\sup_{t\in [0,T]}|X_t^\eps-Y_t|^2\biggr].
    \end{equation*}
\end{rem}

\begin{proof}[Proof of Theorem \ref{thm:pathwiseconvrate}]
    \ref{item:PathQuant} First consider the case $\xi(x) = x^1$, i.e.\ $b=0$. The solutions to~\eqref{eq:genOU-SC} and~\eqref{eq:genOU-SC-limit} read    
    \begin{align*}
        X^\eps_t &= e^{ - \frac{1}{\eps}\tilde K t} X^\eps_0 + \int_0^t e^{-\frac{1}{\eps} \tilde K(t-s)} MX^\eps_s ds + \int_0^t e^{-\frac{1}{\eps}\tilde K(t-s)} C dW_s \ \text{ and } \\
        Y_t &= Y_0 + \int_0^t PMY_s ds + \int_0^t PC dW_s\,,
    \end{align*}
    where we have employed variation of constants \eqref{eq:VarofCon-sol} to arrive at the solution of $X^\eps_t$ and $Y_t$. Using Young's inequality we find
    \begin{equation}\label{eq:PathEstSep}
    \begin{aligned}
        \E\biggl[ \sup\limits_{t \in [0,T]} \bigl|X^\eps_t - Y_t\bigr|^2 \biggr] 
        &\leq  3\E\biggl[ \sup\limits_{t \in [0,T]} \bigl|e^{-\frac{1}{\eps}\tilde K t}  X^\eps_0 - Y_0\bigr|^2 \biggr] + 3 \E\biggl[ \sup\limits_{t \in [0,T]} \biggl|\int_0^t \Bigl( e^{-\frac{1}{\eps}\tilde K(t-s)}MX^\eps_s - PMY_s\Bigr) ds \biggr|^2 \biggr]  \\
        & \ +3\E\biggl[ \sup\limits_{t \in [0,T]} \biggl|\int_0^t \Bigl( e^{-\frac{1}{\eps} \tilde K(t-s)}C - PC \Bigr)dW_s\biggr|^2 \biggr] \eqqcolon 3 (I_1 + I_2 + I_3) \,.
    \end{aligned}   
    \end{equation}
    We treat each term separately and start with the last one. Applying Doob's inequality followed by the It\^o isometry, then using the sub-multiplicativity of the Frobenius norm and lastly the bound \eqref{eq:matrixexp-int-estimate} we find 
    \begin{align*}
        I_3 &\leq   
        4 \E \biggl[\biggl| \int_0^T \Bigl( e^{- \frac{1}{\eps} \tilde K (T-s) }  - P \Bigr) C dW_s \biggr|^2 \biggr]
        = 4 \int_0^T \Bigl\|\bigl(e^{- \frac{1}{\eps} \tilde K (T-s) }  - P\bigr)C \Bigr\|^2_F ds \\
        &\leq  4 \int_0^T \bigl\|e^{- \frac{1}{\eps} \tilde K (T-s) }  - P \bigr\|_F^2 \|C \|^2_F ds 
        \leq 4 \eps c_P \|C\|_F^2. 
    \end{align*}

    Next we consider $I_1$. Recall that for $b=0$  we have $Y_0 =  PX_0$. Adding a zero and applying Young's inequality in the first step we have
    \begin{equation}\label{eq:Quant-I1-0}
        \bigl|e^{- \frac{1}{\eps} \tilde K t}X_0^\eps - Y_0\bigr|^2 
        \leq 2\bigl|(e^{- \frac{1}{\eps} \tilde K t} - P)X_0^\eps \bigr|^2  + 2\bigl| PX_0^\eps - Y_0\bigr|^2 
        \leq  2\bigl\|e^{- \frac{1}{\eps} \tilde K t} - P\bigr\|_F^2 \bigl|X^{1,\eps}_0\bigr|^2 + 2|PX_0^\eps - PX_0|^2 
    \end{equation}
where the second inequality follows by the sub-multiplicativity of the norm and using~\eqref{eq:matrixexp} which gives
\begin{align*}
        (e^{- \frac{1}{\eps} \tilde K t} - P)X_0^\eps 
        = \begin{pmatrix}
            e^{-\frac{1}{\eps}K_{11}t} & 0 \\ K_{21}e^{-\frac{1}{\eps}K_{11}t}K_{11}^{-1} & 0
        \end{pmatrix}\begin{pmatrix} X^{1,\eps}_0 \\ X^{2,\eps}_0 \end{pmatrix}
        = (e^{- \frac{1}{\eps} \tilde K t} - P) 
        \begin{pmatrix} X^{1,\eps}_0 \\ 0 \end{pmatrix}.
    \end{align*}
As a consequence, using $|PX_0^\eps - PX_0|^2  \leq \|P\|_F^2 |X_0^\eps - X_0|^2 $ and Corollary \ref{cor:expK-P} or \eqref{eq:Frobeniusest_expK-Psymm} depending on whether $K_{11}$ is defective or not, we can bound $I_1$ from above by 
\begin{align} \label{eq:Quant-I1} 
    I_1 \leq c \left( \E(|X_0^{1,\eps}|^2 + \E(|X_0^\eps - X_0|^2) \right) 
\end{align}
where $c>0$ is independent of $T $ and $\eps$.
If additionally $X_0 \in \xi^{-1}(0),$ i.e. $X_0^1=0$, so that \[|X_0^{1,\eps}|^2 = |X_0^{1,\eps} - X_0^1|^2 \leq |X_0^{\eps} - X_0|^2\]
and we have the overall bound
\begin{equation} \label{eq:Quant-I1-X0inxi}
    I_1 \leq c \E\bigl[|X_0^\eps - X_0|^2\bigr], 
\end{equation}
where again $c>0$  is independent of $T$ and $\eps$.
%

For $I_2$ we first add a zero and calculate using Young's inequality and in the second step the Cauchy-Schwarz inequality, 
\begin{align*}
  I_2 &=  \E\biggl[ \sup\limits_{t \in [0,T]} \biggl|\int_0^t \Bigl( e^{-\frac{1}{\eps}\tilde K(t-s)}M (X^\eps_s - Y_s) + (e^{-\frac{1}{\eps}\tilde K (t-s)} - P)MY_s \Bigr)ds \biggr|^2 \biggr] \\
&\leq 2\E\biggl[  \sup\limits_{t \in [0,T]} \biggl|\int_0^t e^{-\frac{1}{\eps}\tilde K(t-s)}M (X^\eps_s - Y_s) ds\biggr|^2\biggr] + 2\E\biggl[ \sup\limits_{t \in [0,T]} \biggl|\int_0^t (e^{-\frac{1}{\eps}\tilde K (t-s)} - P)MY_s ds \biggr|^2 \biggr]\\
&\leq 2\E\biggl[  \sup\limits_{t \in [0,T]} t \int_0^t \biggl|e^{-\frac{1}{\eps}\tilde K(t-s)}M (X^\eps_s - Y_s) \biggr|^2 ds\biggr] \\
&\qquad\qquad\qquad+ 2\E\biggl[ \sup\limits_{t \in [0,T]} \left( \int_0^t \bigl\| (e^{-\frac{1}{\eps}\tilde K (t-s)} - P)M \bigr\|_F^2 ds \right) \left(\int_0^t |Y_s|^2 ds \right)\biggr]\\
&\leq 2\|M\|_F^2 \E\biggl[  \sup\limits_{t \in [0,T]} t \int_0^t \bigl\|e^{-\frac{1}{\eps}\tilde K(t-s)}\bigr\|^2_F \bigl|X^\eps_s - Y_s\bigr|^2  ds \biggr] \\
&\qquad\qquad\qquad+ 2\|M\|_F^2  \E\biggl[  \sup\limits_{t \in [0,T]} \biggl(\int_0^t \bigl\|e^{-\frac{1}{\eps}\tilde K(t-s)}-P\bigr\|_F^2 ds \biggr) \int_0^t |Y_s|^2ds \biggr] \\
&\eqqcolon 2\|M\|_F^2 \bigl(I_{2,1} + I_{2,2}\bigr)\,.
\end{align*}
Let us consider the terms separately. For $I_{2,1}$ we first add a zero and apply Young's inequality, using Fubini's theorem in the third step we find
\begin{equation}\label{eq:I21-Bound}
    \begin{aligned}
    I_{2,1}& \leq  \E\biggl[ \sup\limits_{t \in [0,T]} 2t \int_0^t \Bigl[ \bigl\| e^{-\frac{1}{\eps}\tilde K(t-s)}  - P\bigr\|^2_F + \|P\|^2_F \Bigr]  |X^\eps_s - Y_s|^2 ds \biggr] \\
    &\leq  2T\E\biggl[  \int_0^T \bigl[ \bigl\| e^{-\frac{1}{\eps}\tilde K(T-s)}  - P\bigr\|^2_F  + \|P\|^2_F \bigr] |X^\eps_s - Y_s|^2 ds\biggr] \\
    &= 2T \int_0^T \bigl[ \bigl\| e^{-\frac{1}{\eps}\tilde K(T-s)}  - P\bigr\|^2_F  + \|P\|^2_F \bigr] \E\biggl[ |X^\eps_s - Y_s|^2 \biggr] ds \\
    &\leq   2 T\int_0^T \bigl[ \bigl\| e^{-\frac{1}{\eps}\tilde K(T-s)}  - P\bigr\|^2_F + \|P\|^2_F \bigr] \E\biggl[ \sup_{\tau\in [0,s]}|X^\eps_\tau - Y_\tau|^2 \biggr] ds. 
    \end{aligned}
    \end{equation}
For  $I_{2,2}$  we also use \eqref{eq:matrixexp-int-estimate} in the first step and Fubini's theorem to compute
\begin{align*}
I_{2,2} & \leq \eps c_P \E\biggl[  \sup\limits_{t \in [0,T]} \int_0^t |Y_s|^2ds \biggr] 
=  \eps c_P \int_0^T \E \bigl[ |Y_s|^2 \bigr] ds\,.
\end{align*}
Since $Y_t$ is an OU process we know that $m_s,\Sigma_s$  are given in \eqref{linSDE-NormalSolution}, and therefore 
\begin{align}
    \E\bigl[|Y_s|^2\bigr] &= |m_s|^2 + \tr(\Sigma_s) \notag\\ 
    &= \bigl|e^{PMs}\E\bigl[Y_0\bigr]\bigr|^2 + \tr\bigl(e^{PMs} \Sigma_0 e^{M^TP^Ts}\bigr) + \int_0^s \tr\bigl(e^{PM(s-r)} PCC^TP^T e^{M^TP^T(s-r)}\bigr) dr \notag\\
    &\leq \bigl\|e^{PMs}\bigr\|_F^2 |\E\bigl[Y_0\bigr]|^2 + \|e^{PMs}\|_F^2 \| \Sigma_0^{\frac{1}{2}} \|^2_F + \int_0^s \|e^{PM(s-r)}\|_F^2 \| PC\|_F^2  dr\,. \label{eq:2ndmoment-Y}
\end{align}
Let $S$ be the matrix that transforms $PM$ into its Jordan normal form, i.e. $SPMS^{-1} = \Lambda + N,$ where $\Lambda$ is a diagonal matrix with the eigenvalues of $PM$ as entries and $N$ is an upper triangular nilpotent matrix. Then \begin{align*}
    \|e^{PMs}\|_F^2 &= \| S^{-1}e^{\Lambda s} e^{N s}S\|_F^2 \leq  \| S^{-1} \|_F^2 \|e^{\Lambda s}\|_F^2 \|e^{N s}\|_F^2 \|S\|_F^2 \leq  d \kappa(S) \|e^{N s}\|_F^2 e^{2\lambda_{\max(PM)}s}\,,
\end{align*}
where $\kappa(S)$ is the condition number of $S$.
Hence, by a similar calculation as \eqref{eq:calcintexptimespolynomial}, we find
\begin{align*}
    \int_0^T \|e^{PMs}\|_F^2 ds \leq \hat c e^{\lambda_{\max}(PM)T}. 
\end{align*}
for some constant $\hat c>0$ independent of $\eps$ and $T$.

This means, that overall $I_{2,2} \leq \eps c  e^{\lambda_{max}(PM) T}$, where $c>0$ is independent of $T$ and $\eps$ but depends on $\E[Y_0]$, $\Sigma_0$ (and consequently $\E[X_0]$ by the definition of $Y_0$) through the first term in~\eqref{eq:2ndmoment-Y}.  
Substituting these bounds into~\eqref{eq:PathEstSep}, we find
\begin{align*}
    \E\biggl[\sup_{t\in [0,T]} \bigl|X_t^\eps - Y_t\bigr|^2 \biggr]
    &\leq \gamma(\eps,T) + c_2 T \int_0^T\bigl[ \bigl\| e^{-\frac{1}{\eps}\tilde K(T-s)}  - P\bigr\|^2_F + \|P\|^2_F \bigr] \E\biggl[\sup_{t\in [0,s]} \bigl|X_t^\eps - Y_t\bigr|^2 \biggr] ds  ,
\end{align*}
where $\gamma(\eps,t) = c_1 (\E(|X_0^{1,\eps}|^2 + \E(|X_0^{\eps} - X_0|^2 + \eps (1+e^{\lambda_{max}(PM) T})) $ for $X_0 \in \R^d$ and $\gamma(\eps,t) =  c_1 ( \E(|X_0^{\eps} - X_0|^2) + \eps (1+e^{\lambda_{max}(PM) T})) $ for $X_0 \in \xi^{-1}(0)$.
Thus, by Gronwall's inequality and \eqref{eq:matrixexp-int-estimate} 
\begin{align*}
    \E\biggl[\sup_{t\in [0,T]} \bigl|X_t^\eps - Y_t\bigr|^2 \biggr] \leq 
   \gamma(\eps,T)
    e^{c_3 T^2 + \eps c_P c_2 T}.
\end{align*}

Next we discuss the case $b\neq 0$, i.e.\ $\xi(x) = x^1 - b$. 
With the coordinate-shifted variables $\bar X_t = X_t - \left(b,\,0
\right)^T$  and $\bar Y_t = Y_t +  \left(b,\, 0\right)^T$, we are back to the previous case of a coordinate projection onto zero and note that 
\[
\E\biggl[\sup\limits_{t \in [0,T]} \bigl|\bar X_t - \bar Y_t\bigr|^2\biggr]= \E\biggl[\sup\limits_{t \in [0,T]} \bigl|X_t -  Y_t\bigr|^2\biggr]\,.
\]
The proof goes through exactly as above, in particular the estimates for $I_1, I_2, I_3$ remain unchanged. Let us briefly discuss $I_1$, since this is the term where the shifted initial datum appears. First note that \eqref{eq:Quant-I1-0} becomes (after  adding a zero and using that $\bar Y_0 = P \bar X_0 $)
    \begin{align*}
        \bigl|e^{- \frac{1}{\eps} \tilde K t}\bar X_0^\eps - \bar Y_0\bigr|^2  
        &= \bigl|e^{- \frac{1}{\eps} \tilde K t}\bar X_0^\eps - P \bar X_0^\eps   + P \bar X_0^\eps - \bar Y_0\bigr|^2 
         \leq 2\bigl| \left(e^{- \frac{1}{\eps} \tilde K t} - P \right) \left(X_0^\eps -\left(\begin{smallmatrix}
            b \\ 0
        \end{smallmatrix}\right) \right) \bigr|^2 +  \bigl|  P\bar X_0^\eps- P \bar X_0 \bigr|^2\\
        & \leq \|e^{-\frac{}{\eps}\tilde K t} - P\|_F^2 \bigl|X^{1,\eps}_0 - b \bigr|^2 + \|P\|_F^2 \bigl|  X_0^\eps- X_0 \bigr|^2
    \end{align*}
    Now, since $X_0^1=b$, we find essentially the same estimate for $I_1$, which is given by \eqref{eq:Quant-I1-X0inxi}.
The only change is an additional term, called $I_4$ below. It arises due to the shift in the initial conditions and is deterministic. We bound it as follows
\begin{align*}
   I_4:= &\E\biggl[\sup\limits_{t \in [0,T]} \biggl|\int_0^t \bigl(e^{-\frac{1}{\eps} \tilde K (t-s)} - P\bigr) M \begin{pmatrix}
        b \\ 0
    \end{pmatrix} ds \biggr|^2\biggr] 
    \leq  \sup\limits_{t \in [0,T]} t \int_0^t \bigl\|e^{-\frac{1}{\eps} \tilde K (t-s)} - P\bigr\|^2_F \|M\|^2_F |b|^2 ds  \\
    &\leq \sup\limits_{t \in [0,T]} t \|M\|^2_F |b|^2 c_P \eps =  T \|M\|^2_F |b|^2 c_P \eps 
\end{align*}
where we have used~\eqref{eq:matrixexp-int-estimate}
and which adds up to the constant. Note that also the constant factors change, since Young's inequality is applied to four summands in this case, so that \eqref{eq:PathEstSep} will have a factor of 4 now.   
The convergence in probability follows from the bound above and Markov's inequality. This completes the proof of~\ref{item:PathQuant} .

Next we prove the pointwise in time estimate \ref{item:PointQuant}. We have 
\begin{align*}
        \E\Bigl[  \bigl|X^\eps_t - Y_t\bigr|^2 \Bigr] 
        &\leq  4\E\biggl[  \bigl|e^{-\frac{1}{\eps}\tilde K t}  X^\eps_0 - Y_0\bigr|^2 \biggr] + 4 \E\biggl[\biggl|\int_0^t \Bigl( e^{-\frac{1}{\eps}\tilde K(t-s)}MX^\eps_s - PMY_s\Bigr) ds \biggr|^2 \biggr]  \\
        & \ +4\E\biggl[ \biggl|\int_0^t \Bigl( e^{-\frac{1}{\eps} \tilde K(t-s)}C - PC \Bigr)dW_s\biggr|^2 \biggr] + 4\biggl|\int_0^t \bigl(e^{-\frac{1}{\eps} \tilde K (t-s)} - P\bigr) M \begin{pmatrix}
        b \\ 0
    \end{pmatrix} ds \biggr|^2 \\
    &\eqqcolon 4 (\bar I_1 +\bar I_2 + \bar I_3 + \bar I_4)\,,
    \end{align*}
and by repeating the calculation in (\ref{eq:Quant-I1-0}-\ref{eq:Quant-I1}), we obtain  
\begin{align*}
\bar I_1 \leq 
2\bigl\|e^{- \frac{1}{\eps} \tilde K t} - P\bigr\|_F^2 \E\bigl[ |X_0^{1} - b|^2\bigr] +  2 \|P\|_F^2\E\bigl[ | X_0^\eps - X_0|^2 \bigr]\,.
\end{align*}   
Depending on whether $K_{11}$ is defective or not, we apply the bound of Corollary \ref{cor:expK-P} or \eqref{eq:Frobeniusest_expK-Psymm} to the term $\bigl\|e^{- \frac{1}{\eps} \tilde K t} - P\bigr\|_F^2 $.\\
From It\^o's isometry along with~\eqref{eq:matrixexp-int-estimate} we bound $\bar I_3$ by 
\begin{align*}
\bar I_3 \leq \eps c_P \|C\|_F^2. 
\end{align*}
Repeating the calculations for $I_4$ and $I_2$, we find
\begin{align*}
   \bar I_4
   \leq  \eps t \|M\|^2_F |b|^2 c_P 
\end{align*}
and
\begin{align*}
 \bar I_2 &\leq 2\|M\|_F^2 \left( 2t \int_0^t (\|e^{-\frac{1}{\eps }\tilde K(t-s)} - P \|_F^2 + \|P\|_F^2 )  \E\bigl[\bigl|X^\eps_s - Y_s\bigr|^2\bigr] ds  + \eps c  e^{2\lambda_{\max}(PM)t} \right)  . 
\end{align*}

Finally, combining these bounds and applying Gronwall's inequality we arrive at (in the case that $K_{11}$ non-defective)
\begin{align*}
    \E\Bigl[  \bigl|X^\eps_t - Y_t\bigr|^2 \Bigr] \leq
    c_1\biggl( e^{-\frac{2\lambda_1 t}{\eps}} \E\bigl[ |X_0^{1} - b|^2\bigr] + \E\bigl[ | X_0^\eps - X_0|^2 \bigr]  + \eps\bigl[1+   e^{\lambda_{max}(PM)t} \bigr]  \biggr) 
    e^{\eps t + c_2 t^2},
\end{align*}
where $c_1,c_2$ are independent of $\eps,t$. The bound for $K_{11}$ being defective follows analogously replacing the exponential decay of the initial term by $e^{-\frac{\lambda_1 -\delta}{\eps}t}$ as given by Corollary \ref{cor:expK-P}.
\end{proof}

\begin{rem}[Generalisation to nonlinear drift]\label{rem:QuantNonlin}
The quantitative estimate in Theorem~\ref{thm:pathwiseconvrate}  also works for nonlinear SDEs where the drift $f\colon \R^d \to \R^d$ is smooth and Lipschitz continuous (see below for precise growth conditions). The SDE~\eqref{eq:genOU-SC} then reads 
\begin{equation*}
    dX_t = f(X_t)dt -\frac{1}{2\eps} K\nabla|\xi(X_t)|^2dt +\sqrt{2}CdW_t
\end{equation*}
where the corresponding limit (cf.~\eqref{eq:genOU-SC-limit}) is given by 
\begin{equation*}
       dY_t = P f(Y_t) dt + \sqrt{2 } PC dW_t,
\end{equation*}
with $P$ as defined in~\eqref{def:genOU-Proj}. The only change in Theorem~\ref{thm:pathwiseconvrate} in this case is in the $I_2$ term. In particular, adding and subtracting $e^{-\frac{1}{\eps}\tilde K (t-s)}f(Y_s)$ we can estimate  
\begin{equation}\label{eq:remI2}
\begin{aligned}
    I_2 &= \E\biggl[ \sup\limits_{t \in [0,T]} \biggl|\int_0^t \Bigl( e^{-\frac{1}{\eps}\tilde K(t-s)}\bigl(f(X^\eps_s) -f(Y_s)\bigr)  + (e^{-\frac{1}{\eps}\tilde K (t-s)} - P)f(Y_s) \Bigr)ds \biggr|^2 \biggr]\\
    & \leq 2\E\biggl[ \sup_{t\in [0,T]} \biggl|\int_0^t e^{-\frac{1}{\eps}\tilde K(t-s)}\bigl(f(X^\eps_s) -f(Y_s)\bigr) ds\biggr|^2\biggr] + 2\E\biggl[\sup_{t\in [0,T]}  \biggl|\int_0^t (e^{-\frac{1}{\eps}\tilde K (t-s)} - P)f(Y_s) ds \biggr|^2 \biggr]\\
    &\leq 2 L_f\E\biggl[ \sup_{t\in [0,T]} \biggl|\int_0^t e^{-\frac{1}{\eps}\tilde K(t-s)}\bigl|X^\eps_s -Y_s\bigr| ds\biggr|^2\biggr] + 2  \biggl(\int_0^T \bigl\|e^{-\frac{1}{\eps}\tilde K(T-s)}-P\bigr\|_F^2 ds \biggr) \int_0^T \E\bigl[|f(Y_s)|^2]ds \\
    &\leq 2 T L_f\int_0^T \bigl[ \|e^{-\frac{1}{\eps }\tilde K(T-s)} - P \|_F^2 + \|P\|^2_F \bigr] \E\biggl[ \sup_{\tau\in [0,s]}|X^\eps_\tau - Y_\tau|^2 \biggr] ds + 2\eps  c_P \int_0^T \E\bigl[|f(Y_s)|^2\bigr]ds  
\end{aligned}
\end{equation}
where the first inequality follows by Young's inequality, the second inequality follows since $f$ is Lipschitz with constant $L_f$ and by using Cauchy-Schwarz inequality in the second integral, the third inequality follows from the bound~\eqref{eq:I21-Bound} above and by using~\eqref{eq:matrixexp-int-estimate} for the second integral.

Next we provide a bound for $\E[|f(Y_s)|^2]$. A Lipschitz function $f$ has linear growth at infinity and therefore (under sufficient regularity) we can assume that there exists a constant $c_f>0$ such that $|y\cdot f(y)|\leq c_f(1+|y|^2)$
(or equivalently $|f(y)|\leq c(1+|y|)$ for some constant $c>0$).

In the following we use $\rho_t=\mathrm{law}(Y_t)$ which solves 
\begin{equation*}
   \partial_t\rho_t = -\nabla\cdot \bigl( Pf \rho_t \bigr)+ \nabla^2:\bigl(\bar C \bar C^T \rho_t \bigr)   
\end{equation*}
where $\bar C=PC$, $\nabla^2$ is the Hessian and $A:B=\tr(A^TB)$. We have 
\begin{align*}
    \frac12 \frac{d}{dt}\E\bigl[|Y_s|^2\bigr] &= \frac12\frac{d}{dt} \int_{\R^d} |y|^2 \rho_t(dy) = \int_{\R^d} \Bigl( \frac12 \nabla |y|^2\cdot Pf(y)+ \bar C\bar C^T : \frac12 \nabla^2 ( |y|^2) \Bigr) \rho_t(dy) \\
    & = \int_{\R^d} \Bigl( y\cdot Pf(y)+  \tr(\bar C\bar C^T ) \Bigr) \rho_t(dy) \leq \|P\|_F \int_{\R^d} c_f(1+|y|^2) \rho_t(dy) + \|\bar C\|_F^2 
\end{align*}
where the second equality follows by using the dynamics of $\rho_t$ and applying integration by parts. The inequality now follows by applying the growth bounds on $f$ and using the  Young's inequality. 

Using Gronwall's inequality we find 
\begin{equation*}
    \E\bigl[|f(Y_s)|^2\bigr]\leq c_f(1+\E[|Y_s|^2]) \leq   \E\bigl[ |Y_0|^2\bigr] e^{\|P\|_F c_f t } + 
    \frac{\|\bar C\|^2_{F}}{c_f\|P\|_F}\bigl(\|\bar C\|^2_F e^{\|P\|_F c_f s }-1\bigr) \leq m_1 e^{\|P\|_F c_f s} + m_2 
\end{equation*}
where $m_1,m_2>0$ are independent of $s$.
Substituting this bound back into the bound for $I_2$~\eqref{eq:remI2} and assuming well-prepared initial data
we arrive at the following quantitative estimate for the nonlinear SDE:
\begin{equation*}
     \E\biggl[\sup_{t\in [0,T]} \bigl|X_t^\eps - Y_t\bigr|^2 \biggr] \leq \eps c_1e^{c_2 T^2}. 
\end{equation*}
\end{rem}

\begin{rem}\label{rem:genOU-affine-gen}
    We note that the results in this section directly apply to the general setting of affine constraint and non-zero mean OU processes by recasting the process to the form \eqref{eq:genOU-SC} considered in this paper. For affine constraints, this can be done by applying an appropriate similarity transformation, which transforms the affine constraint to a coordinate projection. More precisely, let $\xi(x) = Bx - b, \ B \in \R^{k \times d}, b \in \R^k$, where $B$ has rank $k$. Then $ \nabla |\xi|^2 =2 B^T(Bx -b)$ and the similarity transformation is given by \[S = \begin{pmatrix}
        V^T & 0 \\ 0 & I
    \end{pmatrix} \in \R^{d \times d},\] where $V$ is the orthonormal matrix that diagonalizes $B$, i.e.\ $V^TB^TBV = \mathrm{diag}(\lambda_1,\ldots,\lambda_k)$.\\  For non-zero OU process one can consider a coordinate shift, which eventually leads to a shifted level set of the constraint map $\xi.$
\end{rem}

\subsection{Comparison to Katzenberger's approach}\label{ssec:Katz}

In this section we compare our results to those of Katzenberger~\cite{Katzenberger91}, who considers the asymptotic problems of the type studied here for general semi-martingales, and in particular the setting of diffusion processes (see~\cite[Section 8]{Katzenberger91}).

We now present the result in~\cite{Katzenberger91} in the language of this article. To this end consider
\begin{equation}\label{eq:genOU-KatzInitProb}
    dX_t = f(X_t) dt + \frac{1}{\eps} F(X_t) dt + \sqrt{2}C dW_t,
\end{equation}
where $f\colon \R^d \to \R^d$ is locally Lipschitz, $C \in \R^{d \times d}$ and $W_t$ is a $d$-dimensional Brownian motion.
The stiff drift term is characterised by the vector field $F\colon \R^d\to\R^d$. In the setting of this paper $f(x)=Mx$ and $F(x) = \frac12 K \nabla |\xi(x)|^2$. 

The main result in~\cite{Katzenberger91} makes use of the following ordinary differential equation:
\begin{equation} \label{eq:katz-ODE}
    \dot \psi(z,t) = F(\psi(z,t)) \in \R^d \,,  \ \psi(z,0) = z  \in \R^d\,.
\end{equation}
Define 
\[\Gamma = \left\{ x \in \R^d : F(x)= 0 \right\} \subset \R^{k}\,\] 
as the  set of fixed points of the ODE or, put differently, the set of points satisfying the constraint (i.e.~$\Gamma=\xi^{-1}(0)$ in the language of the previous paragraphs). Furthermore, for a given initial condition $z\in \R^d$, we define the long-time limit of the ODE above 
\begin{align*}
    \theta(z) :=\lim\limits_{t \to \infty} \psi(z,t) \quad \text{ and } \quad U_{\Gamma} := \left\{x \in \R^d : \theta(x) \text{ exists and } \theta(x) \in \Gamma  \right\}. 
\end{align*}
Before stating the main result in~\cite{Katzenberger91}, we introduce the limit process 
        \begin{equation*}
            dY_t = \nabla\theta^T f(Y_t) + \sqrt{2} \nabla \theta^T \, C dW_t \,,
        \end{equation*}
and the stopping time $\lambda(K) = \left\{ \inf t \geq 0 : Y_t \notin \mathring{K}\right\}$ for a compact set $K \subset \Gamma$, and we write $(Y_t)_{\lambda(K)}$ for the process  stopped at $\lambda(K)$. Here $\mathring{K}$ denotes the interior of $K$.
    \begin{theorem}\cite[Section 8]{Katzenberger91} \label{thm:katzenberger}
        Assume that 
        \begin{enumerate}[label=(\roman*)]
            \item $\forall \ y \in U_{\Gamma}: \ \nabla F(y) \in \R^{d \times d}$ has $k$ eigenvalues with negative real part \label{ass:katz-hurwitz}
            \item $\theta \in C^2$ and $\nabla \theta, \nabla^2 \theta $ are locally Lipschitz 
            \item $X^\eps(0) \to Y(0) \in \Gamma$ in probability.
        \end{enumerate}  
        Then the solution $X^\eps$  to~\eqref{eq:katz-ODE} satisfies 
        \begin{equation*}
            (X^\eps_t)_{\lambda(K)}\to (Y_t)_{\lambda(K)}  \text{ as $\eps\to 0$ } 
        \end{equation*}
        in probability in $C([0,\infty))$ uniformly on bounded time intervals.
    \end{theorem}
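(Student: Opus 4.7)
The plan is to project the stiff process onto the constraint set $\Gamma$ via the map $\theta$ and analyse the two resulting pieces separately: the projected process $\theta(X^\eps_t)$, which should converge to the limit $Y_t$, and the transversal residual $X^\eps_t - \theta(X^\eps_t)$, which should relax to zero on the fast time scale $\eps$. The stopping at $\lambda(K)$ is used throughout to keep $X^\eps$ inside a compact piece of $U_\Gamma$ where $F$, $\theta$, and their derivatives are uniformly controlled.

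The first step, and the one that makes the whole strategy work, is the invariance identity $\nabla\theta(z)^T F(z) = 0$ on $U_\Gamma$. It is obtained by differentiating the flow invariance $\theta(\psi(z,s)) = \theta(z)$ at $s=0$: since $\theta$ is the $s\to\infty$ limit of $\psi(\cdot,s)$, it is invariant under the flow generated by $F$. Applying It\^o's formula to $\theta(X^\eps_t)$ then kills the stiff drift and leaves
\[
    d\theta(X^\eps_t) = \nabla\theta(X^\eps_t)^T f(X^\eps_t)\, dt + \sqrt{2}\,\nabla\theta(X^\eps_t)^T C\, dW_t + R_\eps(X^\eps_t)\, dt,
\]
where $R_\eps$ is a second-order It\^o correction built from $\nabla^2\theta$ and $CC^T$. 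On the stopped region all coefficients are uniformly bounded, so the right-hand side is an order-$1$ semimartingale in $\eps$.

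The second step is to identify the weak limit. Standard Kolmogorov-type moment estimates applied to the above SDE yield tightness of $\theta(X^\eps)$ in $C([0,T])$, and any subsequential limit solves the limiting SDE by a martingale problem argument, local Lipschitz continuity of $\nabla\theta$, $f$, $C$ ensuring uniqueness and thus full convergence. To pass from $\theta(X^\eps_t)$ back to $X^\eps_t$ one must control the residual $R^\eps_t \coloneqq X^\eps_t - \theta(X^\eps_t)$. Here assumption~\ref{ass:katz-hurwitz} is crucial: the $k$ eigenvalues of $\nabla F$ with negative real part produce local exponential contraction of $\psi(\cdot,s)$ transversally to $\Gamma$, so a Gronwall estimate on $|R^\eps_t|^2$ yields decay of order $\exp(-c\,t/\eps)$ on top of a noise-driven fluctuation of order $\sqrt{\eps}$, both of which vanish uniformly on $[0,T]$; the assumption $X^\eps(0) \to Y(0) \in \Gamma$ removes any initial boundary layer.

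The main obstacle, and the reason Katzenberger's original argument is long and technical, is producing tightness of the full (unprojected) process in the Skorokhod $J_1$ topology while simultaneously controlling two distinct time scales: the slow order-$1$ motion of $\theta(X^\eps_t)$ along $\Gamma$ and the fast order-$\eps$ relaxation of $R^\eps_t$ transversally. The stopping time $\lambda(K)$ has to be inserted precisely to prevent $X^\eps$ from drifting out of the basin where the Hurwitz condition, the regularity of $\theta$, and the local Lipschitz bounds on $f$ all hold uniformly. This multi-scale tightness in a c\`adl\`ag topology is what the linear/affine setting of Theorem~\ref{thm:genOU-SC} and Theorem~\ref{thm:pathwiseconvrate} circumvents altogether by reducing everything to the explicit matrix-exponential asymptotic of Lemma~\ref{thm:CampbellRose}, at the cost of giving up the nonlinearity but gaining quantitative mean-square rates.
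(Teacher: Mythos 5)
The paper does not prove Theorem~\ref{thm:katzenberger}; it is imported from Katzenberger (the relevant result being Theorem~6.3 there), so there is no in-paper proof to compare your sketch against. Taken on its own as an outline of Katzenberger's argument, your plan hits the main ideas in the right order: the invariance identity $\nabla\theta^T F = 0$, obtained by differentiating $\theta\circ\psi(\cdot,s) = \theta$, is indeed what kills the $\eps^{-1}$-drift after It\^o's formula; tightness in a c\`adl\`ag/uniform topology followed by a martingale-problem identification is how the limit is characterised; and the Hurwitz hypothesis on $\nabla F$, together with the stopping time $\lambda(K)$, is what furnishes uniform transversal contraction on a compact piece of $U_\Gamma$.

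One mismatch should be flagged. You correctly produce the It\^o correction $R_\eps(X^\eps_t)\,dt$ with $R_\eps$ built from $\nabla^2\theta$ and $CC^T$, and you note that it is $O(1)$ in $\eps$. It therefore does not vanish in the limit, so any subsequential limit of $\theta(X^\eps)$ solves an SDE that \emph{carries} a second-order drift term involving $\mathrm{tr}\bigl(\nabla^2\theta\, CC^T\bigr)$; this is exactly the form in Katzenberger's Theorem~6.3. The limit SDE displayed immediately before Theorem~\ref{thm:katzenberger} in the present paper has no such term, which is correct only in the affine setting of Theorem~\ref{thm:genOU-SC}, where $\theta$ is affine and $\nabla^2\theta\equiv 0$. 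Your sketch should either keep the correction in the limiting generator and say so, or explicitly observe that the stated SDE is the affine specialisation; as written you identify the correction and then silently drop it. A second, smaller point: as Remark~\ref{rem:genOU-IC} records, Katzenberger works with the modified process $Y^\eps_t := X^\eps_t - \psi(X^\eps_0, t/\eps) + \theta(X^\eps_0)$, not with the decomposition $X^\eps_t = \theta(X^\eps_t) + R^\eps_t$. Under hypothesis (iii) the initial layer vanishes, so your choice is serviceable, but in the presence of the Brownian input the residual does not decay like $e^{-ct/\eps}$ uniformly in time; after the initial transient it fluctuates at scale $O(\sqrt{\eps})$, and controlling it over the whole stopped horizon is where the real technical weight of Katzenberger's argument lies, not in the tightness of $\theta(X^\eps)$ alone.
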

    Let us now relate the assumptions of the above Theorem to our assumptions and give explicit expressions for the functions $\psi, \theta$ defining the limiting dynamics $Y$ in our setting. To this end note that  for  $\xi(x) = x^1 - b \in \R^k$ we have
\begin{align*}
    F(x) = -\frac{1}{2} K \nabla |x^1 -b|^2 = - \begin{pmatrix}
        K_{11} & 0 \\ K_{21} & 0
    \end{pmatrix} \left( x
    - \begin{pmatrix}
        b \\ 0 
    \end{pmatrix} \right) \,, \quad \text{and } \ \nabla F \equiv -\begin{pmatrix}
 K_{11} & 0 \\ K_{21} & 0   
\end{pmatrix} \eqqcolon \tilde K.
\end{align*}
Assumption $(i)$ in Theorem~\ref{thm:katzenberger} is equivalent to requiring that $\tilde K$ or equivalently (for an explanation see the proof of Theorem \ref{thm:genOU-SC}) $-K_{11}$ is Hurwitz (as in Theorem~\ref{thm:genOU-SC}), and Assumption $(ii)$ always holds in our setting as will be illustrated below. 

Note that Katzenberger provides an implicit form for the soft-constrained limit defined via $\theta$. In contrast, Theorem~\ref{thm:genOU-SC}
directly states the explicit form of the limiting dynamics which includes a projection matrix $P$ (recall Remark~\ref{rem:genOU-proj} for details). Such projections have recently been studied in related works~\cite{projection_diffusion,Zhang20,SharmaZhang21} which deal with non-degenerate diffusions (note that in our SDEs the diffusion matrix can be degenerate). Note, however, that the results in Theorem~\ref{thm:genOU-SC} entail a pointiwse-in-time limit while~\cite{Katzenberger91} provides convergence of the path over $[0,T]$. This difference is due to the treatment of initial conditions (see Remark~\ref{rem:genOU-IC} for details). The results in Theorem~\ref{thm:pathwiseconvrate} thus are a quantitative version of~\cite{Katzenberger91}. Note that these quantitative results easily generalise to nonlinear SDEs (see Remark~\ref{rem:QuantNonlin}) without requiring any compactness arguments. Additionally, our quantitative pointwise-in-time estimate~\eqref{eq:PointBound} does not require any special treatment of the initial conditions (see Remark~\ref{rem:QuantDisc} for details).


We now examine the limiting dynamics above in the setting of coordinate-projection constraints. In order to explicitly calculate the limit SDE above we need to solve the ODE~\eqref{eq:katz-ODE}, which admits the solution
\begin{align*}
    \psi(z,t) & = e^{-\tilde K t}\psi(z,0)+\int_0^t e^{-\tilde K (t-s)} \tilde K \begin{pmatrix}
        b \\ 0
    \end{pmatrix} ds
     = e^{-\tilde K t}z + \bigl(I-e^{-\tilde K t}\bigr)\begin{pmatrix}
        b \\ 0
    \end{pmatrix},
\end{align*}
and compute it's long-time limit $\theta(z)$. This is contained in Lemma \ref{lem:matrixexp} which states that $e^{-\tilde K t}\to P$ as $t\to\infty$ (this follows by rescaling time by $\eps$), using which we find
\begin{equation} \label{eq:theta}
    \theta(z) = P \left(z  - \begin{pmatrix}
        b \\ 0
    \end{pmatrix}\right)+\begin{pmatrix}
        b \\ 0
    \end{pmatrix} \in \xi^{-1}(0) \ \forall \, z \in \R^d \quad \text{ and } \quad \nabla \theta^T=P.
\end{equation}

This shows that $\theta$ is indeed a projection onto the manifold $\Gamma = \xi^{-1}(0)$ and $P$ is a projection onto the tangent space  of $\xi^{-1}$.

\subsection{Where things fail: underdamped Langevin dynamics with spatial constraint}\label{ssec:LinearLangevin}

So far we have treated soft constraint limits of OU processes with possibly degenerate noise, admitting a unique invariant measure. The underdamped Langevin dynamics with quadratic potential can be seen as a special case of such OU processes. More precisely in the same notation as above, the underdamped Langevin equation can be written as\begin{align*}
    dX_t = (J-A) \nabla H(X_t) dt + \sqrt{2A} dW_t, 
\end{align*}
where  
\begin{align*}
   X&= \begin{pmatrix}
       q \\ p
   \end{pmatrix}\in \R^{2d},\ A= \begin{pmatrix}
        0 & 0 \\ 0 & \gamma I
    \end{pmatrix}\in \R^{2d \times 2d}, \gamma \in \R,\ J= \begin{pmatrix}
        0 & I \\ -I & 0
    \end{pmatrix} \in \R^{2d \times 2d},
\end{align*}
and the quadratic Hamiltonian $H(q,p) = V(q) + \frac{1}{2} |p|^2$, where $V$ is a quadratic potential in $q$.

One physically relevant constraint is given by the zero level set of the spatial CG map $\xi(q)=q^1$. 
A natural choice for the matrix $K$ is $K=(A-J)$; cf. Proposition \ref{prop:genOU-choiceK} below.
The invariant measure of the corresponding soft constrained SDE 
\begin{align*}
    dX_t = (J-A) \nabla H(Z_t) dt - (A-J)\frac{1}{2\eps}\nabla |q^1|^2 +\sqrt{2A} dW_t, 
\end{align*}
is given by $\mu^{\eps} = \frac{1}{Z} e^{-V(q) - \frac{1}{2} |p|^2 - \frac{1}{2\eps}|q^1|^2}$ which 
converges to the conditional Gaussian measure $\mu^{\eps = 0} = \frac{1}{\tilde Z} e^{-V(0,q^2) - \frac{1}{2} |p|^2}$ as $\eps \to 0$. Therefore we expect our limit result in Theorem~\ref{thm:genOU-SC} with $K=A-J$ to hold here. However, we cannot apply Theorem \ref{thm:genOU-SC}
here because the condition that $-K_{11} $ is Hurwitz is not satisfied as $K_{11} = (A-J)_{11} = 0_{k \times k}.$ 
Similarly,  Theorem~\ref{thm:katzenberger} of~\cite{Katzenberger91} does not apply since  
\begin{align*}
    \nabla F = \begin{pmatrix}
        0_{d \times d} & 0_{d \times d} \\ I_{k \times k} & 0_{k \times (2d-k)} \\ 0_{(d-k) \times k} & 0_{(d-k) \times (2d-k)}
    \end{pmatrix}
\end{align*}
and thus all eigenvalues of $\nabla F$ have zero real parts. 
Hence, we have to resort to other methods in order to prove similar limit results for the underdamped Langevin equation. This is the topic of the companion paper~\cite{HartmannNeureitherSharma25}.

\section[Stability of invariant measures]{Stability of invariant measures under hard and soft constraints}\label{sec:SteadyState-K}

The previous section dealt with soft-constrained limits of OU processes without any discussion of the long-time behaviour. However, as stated in the introduction, a goal of soft-constraining  is to sample conditional measures on manifolds. In this section we discuss invariant measures in the context of soft-constraining and the role of the matrix $K$ which characterizes the constraint. We answer several questions: (a) does the limit of the soft-constrained yield the \emph{correct} conditional measure, i.e. is the invariant measure of the projected dynamics \eqref{eq:genOU-SC-limit} the same as the invariant measure of the unconstrained process conditional on the constraint $\xi$, and (b) what choice for $K$ leads to the \emph{correct} conditional measure.

In this section we will focus on OU processes of the type~\eqref{eq:genOU-SC} which admit a unique Gaussian invariant measure. Therefore, we make the following assumptions throughout this section. 
\begin{itemize}
    \item $M$ is Hurwitz; 
    \item $\bigl(M,C\bigr)$ is controllable, i.e. $\mathrm{rank}[C, MC, M^2C,\ldots, M^{d-1}C]=d$. 
\end{itemize}
In case $C$ has full rank, the controllability is given.
Under these assumptions, the unconstrained SDE~\eqref{eq:genOU-SC}, i.e.\ with $K\equiv 0$, admits the unique  invariant measure (see Proposition~\ref{prop:invmeasOU})
\begin{equation*}
    \mu\in \mathcal P(\R^d), \ \ \mu = \mathcal N(0,\Sigma),
\end{equation*}
where $\Sigma \in \R^{d\times d}$ is the unique symmetric positive definite solution to the Lyapunov equation
\begin{equation}\label{eq:genOU-Lyap}
    M\Sigma+\Sigma M^T=-2CC^T.
\end{equation}
It turns out that any  OU process which admits $\mu$ as invariant measure can be rewritten in the form (see Proposition~\ref{prop:genOU-PHS})
\begin{equation} \label{eq:genOU-Ham}
    dX_t = (J-A)\Sigma^{-1}X_t dt + \sqrt{2}C dW_t,
\end{equation}
 where $A\coloneqq CC^T \geq 0 \in\R^{d\times d}$ is symmetric positive semi-definite and $J \coloneqq \frac12 \bigl(-\Sigma M^T+M\Sigma\bigr) = - J^T \in\R^{d\times d}$ is skew symmetric. 
The corresponding soft-constrained version reads 
\begin{equation} \label{eq:genOU-SC-Ham}
    dX_t = (J-A)\Sigma^{-1}X_t dt - \frac{1}{2\eps} K \nabla|\xi(X_t)|^2 dt + \sqrt{2}C dW_t,
\end{equation}
which corresponds to~\eqref{eq:genOU-SC} with the choice $M=(J-A)\Sigma^{-1}$.

We use the particular form~\eqref{eq:genOU-SC-Ham} for two reasons. First, this form allows us to make an educated guess for the crucial matrix $K$ which encodes how the constraint submanifold $\xi^{-1}(0)$ is approached. Second, the softly-constrained (nonlinear) Langevin dynamics studied in the companion paper~\cite{HartmannNeureitherSharma25} can also be written in this form with specific choices of $A$ and $J$ and $\Sigma^{-1}X$ will be replaced by the gradient of a given Hamiltonian.  
The effect of the $K$ matrix will be looked at in detail in Section \ref{ssec:MatrixK} and illustrated with a numerical example of a Langevin-type dynamics in Section \ref{sec:numEx}.

\subsection{Conditional probabilities and constrained dynamics}\label{ssec:obliqueProjection}

Given a CG map $\xi(x)=x^1-b$ we denote the conditional probability measure of $\mu=\mathcal N(0,\Sigma)$ 
restricted to the level set $\xi^{-1}(0)$ by $\mu_c \in \mathcal P(\R^{d-k})$. It is explicitly given by (see for instance~\cite[Section 3.4]{Eaton83}) 
\begin{equation}\label{eq:genOU-condSteSta}
    \mu_c =\mathcal N(m_c,\Sigma_c) \ \text{ where } \ 
    m_c = \Sigma_{21}\Sigma_{11}^{-1} b \  \text{ and } \
    \Sigma_c = \Sigma_{22} - \Sigma_{21}\Sigma_{11}^{-1} \Sigma_{12}.
\end{equation}
Let us recall the projected dynamics \eqref{eq:genOU-SC-limit} in this setup, which reads 

\begin{align}\label{eq:lim-HamForm}
    dY_t = P(J-A)\Sigma^{-1}Y_t + \sqrt{2} PC dW_t, \quad \text{ where } P = \begin{pmatrix}
        0 & 0 \\ \alpha & I
    \end{pmatrix} \ \text{ and } \alpha= -K_{21} K_{11}^{-1}\,.
\end{align}
In the following we discuss the  projection $P$ that appears in the limiting dynamics above.  

\begin{rem}[Orthogonal and oblique projections.] We discuss the orthogonality of the projection $P$ for various choices of $K$.
First, observe that the projection $P$ is orthogonal with respect to the standard inner product if and only if $\alpha =0$ , i.e.\ $K_{21}=0$.

     On the other hand, if $A$ is positive definite, i.e.\ $A>0$, we can consider the weighted inner product $\langle x,y\rangle_A \coloneqq x^T A^{-1} y$ for $x,y \in \R^d$. In the weighted inner product space, the choice $K=A$ will result in an orthogonal projection, as we now show. The map $P$ is an orthogonal projection with respect to the inner product weighted by $A^{-1}$ if $\langle Pv - v, Pu \rangle_A = 0$ for any $u,v\in\R^d$. Writing $Pu = \left(0,w^2\right)^T$ and using $\alpha = - A_{11}^{-1}A_{12}$, we calculate
    \begin{align*}
        \langle Pv - v,  Pu \rangle_A &= \left(v^1\right)^T \left[ -(A_{11})^{-1} A_{12} (A^{-1})_{22} - (A^{-1})_{12}  \right] w^2 \,.
    \end{align*}
    By the expressions for block-matrix inversion (similar to \eqref{eq:inversesigma}) we have $-(A_{11})^{-1} A_{12} (A^{-1})_{22} - (A^{-1})_{12}=0$, i.e. $P$ is indeed orthogonal with respect to the inner product weighted by $A^{-1}$ if $K=A>0.$ For a similar discussion see~\cite[Remark 3]{SharmaZhang21}. \end{rem}

    A careful look at the asymptotic result in Theorem~\ref{thm:genOU-SC} reveals that the limit dynamics $Y_t$~\eqref{eq:lim-HamForm} is in fact the same as $PX_t$ with $K=0$, i.e\ the projection of the original unconstrained dynamics.

     The soft-constrained OU process~\eqref{eq:genOU-SC-Ham} can  explicitly be written as
    \begin{align*}
        \begin{pmatrix}dX^1_t \\ dX^2_t
        \end{pmatrix} = (J-A)\Sigma^{-1} \begin{pmatrix}
            X^1_t \\ X^2_t
        \end{pmatrix} dt -\frac{1}{\eps} 
        \begin{pmatrix}
            K_{11} X^1_t \\ K_{21} X^1_t
        \end{pmatrix} dt 
        + \sqrt{2}CdW_t,
    \end{align*}
    where we have used the explicit expression~\eqref{eq:genOU-ExplConst} for $\nabla|\xi|^2$. Choosing $K_{21}=0$ implies that the $X^2$ dynamics has no stiff terms (containing $\eps$) and follows the original unconstrained dynamics (i.e.\ with $K=0$). On the other hand, if $K_{21} \neq 0$ the $X^2$ dynamics is shifted by $\eps^{-1} K_{21} X^1_t$ and in the limit as $\eps \to 0$ this will result in an oblique projection with respect to the standard inner product.

The following result identifies general conditions under which the limit dynamics~\eqref{eq:lim-HamForm}, or equivalently~\eqref{eq:genOU-SC-limit-Expl}, admits $\mu_c$ as the correct invariant measure.
\begin{prop}\label{prop:genOU-SteSt}
Let $-K_{11} \in \R^{k \times k}$ be Hurwitz. Define  $\hat M \coloneqq \alpha M_{12} + M_{22} \in \R^{(d-k) \times (d-k)}$ where $\alpha=-K_{21}K_{11}^{-1}$ and $M=(J-A)\Sigma^{-1}$ (see Theorem~\ref{thm:genOU-SC}). Assume that $\hat M$ is Hurwitz and that $(\hat M,\hat C)$ is controllable, where $\hat C$ is defined in Theorem~\ref{thm:genOU-SC}. Then the (limiting) dynamics $(Y^1_t,Y^2_t)$~\eqref{eq:lim-HamForm}  admits the unique invariant measure $\mu^{\eps=0}\in\mathcal P(\R^d)$ given by 
\begin{equation}\label{eq:genOU-limSteSta}
    \mu^{\eps=0}(dy^1,dy^2) = \delta_{Y^1_0}(dy^1) \ \hat\mu^{\eps=0}(dy^2),  \ \ \  \text{where }  \ \ \mathcal P(\R^{d-k}) \ni \hat\mu^{\eps=0} = \mathcal N(\hat m,\hat \Sigma),
\end{equation}
and $\delta$ is the Dirac-delta measure. Here the mean $\hat m \in \R^{d-k}$ given by  
\begin{equation*}
    \hat m = - \hat M^{-1}(\alpha M_{11}+ M_{21}) Y^1_0
\end{equation*}
and the variance $\hat \Sigma \in \R^{(d-k)\times (d-k)}$ is the unique positive definite solution to 
\begin{equation}\label{eq:Steady-Lyap}
    \hat M\hat \Sigma + \hat \Sigma \hat M^T = -2\hat C \hat C^T.
\end{equation}

Moreover, $\hat\Sigma = \Sigma_c$ according to~\eqref{eq:genOU-condSteSta} if and only if the matrix
\begin{equation*}
    (\alpha+\Sigma_{21}\Sigma_{11}^{-1}) \left( (J_{11} + A_{11})\alpha^T + (J_{12} + A_{12}) \right) \in \R^{(d-k)\times (d-k)}
\end{equation*}
is skew-symmetric.
Assuming well-prepared initial datum for $Y^1_0$, i.e.\ $Y^1_0=b$, we have $\hat m = m_c$ if and only if $b\in \R^k$ is in the kernel of the matrix
\begin{equation*}
    \Sigma_{21}\Sigma_{11}^{-1} + \hat M^{-1}(\alpha M_{11}+M_{21}) \in \R^{(d-k)\times k}. 
\end{equation*}
 In particular, $\hat m =m_c$ for $b=0$. 
\end{prop}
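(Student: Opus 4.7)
My plan is to split the proof into two parts. First, establish the form of the invariant measure $\mu^{\eps=0}$ together with the expressions for $\hat m$ and $\hat\Sigma$. Second, compare these to the conditional Gaussian parameters $(m_c,\Sigma_c)$ from~\eqref{eq:genOU-condSteSta}.

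For the first part, the explicit representation~\eqref{eq:genOU-SC-limit-Expl} shows that $dY^1_t=0$, so $Y^1_t\equiv Y^1_0$ for all $t$, which pins the first marginal to $\delta_{Y^1_0}$. The remaining $Y^2$ component is a standard OU process with linear part $\hat M$, constant forcing $v\coloneqq(\alpha M_{11}+M_{21})Y^1_0$, and noise $\sqrt{2}\hat C\,dW$. Under the Hurwitz and controllability assumptions on $(\hat M,\hat C)$, Proposition~\ref{prop:invmeasOU} yields a unique Gaussian invariant measure $\mathcal N(\hat m,\hat\Sigma)$; setting the mean equation to zero at stationarity gives $\hat m=-\hat M^{-1}v$, while It\^o's formula applied to $Y^2_t(Y^2_t)^T$ produces the Lyapunov equation~\eqref{eq:Steady-Lyap} for $\hat\Sigma$.

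For the covariance comparison, uniqueness of the positive definite solution of~\eqref{eq:Steady-Lyap} gives that $\hat\Sigma=\Sigma_c$ if and only if
\begin{equation*}
    \hat M\Sigma_c+\Sigma_c\hat M^T+2\hat C\hat C^T=0.
\end{equation*}
To evaluate the left-hand side, substitute $M=(J-A)\Sigma^{-1}$, which converts the full Lyapunov equation into $M\Sigma=J-A$ and $\Sigma M^T=-J-A$, yielding block identities such as $M_{11}\Sigma_{11}+M_{12}\Sigma_{21}=J_{11}-A_{11}$, $M_{11}\Sigma_{12}+M_{12}\Sigma_{22}=J_{12}-A_{12}$, and analogously for the $(2,\cdot)$ blocks. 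Combining these with $\hat M=\alpha M_{12}+M_{22}$, $\hat C\hat C^T=\alpha A_{11}\alpha^T+\alpha A_{12}+A_{12}^T\alpha^T+A_{22}$, and $\Sigma_c=\Sigma_{22}-\Sigma_{21}\Sigma_{11}^{-1}\Sigma_{12}$, one can expand the left-hand side. After rearrangement, and using skew-symmetry of the $J_{ii}$ blocks together with symmetry of the $A_{ii}$ blocks, all $M_{ij}$-terms cancel and the result reduces to a symmetric expression of the form $X+X^T$, where $X=(\alpha+\Sigma_{21}\Sigma_{11}^{-1})\bigl((J_{11}+A_{11})\alpha^T+(J_{12}+A_{12})\bigr)$. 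Hence the Lyapunov identity above holds iff $X$ is skew-symmetric, which is precisely the stated condition. The main obstacle here is the block-matrix bookkeeping: the various Lyapunov block identities have to be applied simultaneously and the $J$-contributions tracked carefully so that the skew and symmetric parts reassemble in the expected way.

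The mean comparison under the well-prepared initial datum $Y^1_0=b$ is then immediate: the formula $\hat m=-\hat M^{-1}(\alpha M_{11}+M_{21})b$ agrees with $m_c=\Sigma_{21}\Sigma_{11}^{-1}b$ if and only if $\bigl(\Sigma_{21}\Sigma_{11}^{-1}+\hat M^{-1}(\alpha M_{11}+M_{21})\bigr)b=0$, which is the claimed kernel condition; in particular $\hat m=m_c=0$ when $b=0$.
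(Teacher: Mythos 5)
Your proposal is correct and the high-level structure (invariant-measure characterisation via Proposition~\ref{prop:invmeasOU}, then a Lyapunov-equation criterion for $\hat\Sigma=\Sigma_c$, then the elementary mean comparison) matches the paper's proof. The one place where you deviate is in the bookkeeping for the covariance criterion: the paper inserts the explicit block inverse of $\Sigma^{-1}$ (the Schur-complement formula~\eqref{eq:inversesigma}) to write $M_{12},M_{22}$ and hence $\hat M$ directly in terms of $J$, $A$ and $\Sigma$-blocks, and then multiplies by $\Sigma_c$ so that the $\Sigma_c^{-1}$ factors cancel. You instead expand $\hat M\Sigma_c=(\alpha M_{12}+M_{22})(\Sigma_{22}-\Sigma_{21}\Sigma_{11}^{-1}\Sigma_{12})$ and use the block identities coming from $M\Sigma=J-A$ (equivalently $\Sigma M^T=-J-A$) to eliminate all $M_{ij}$ terms. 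Both routes land on the same intermediate expression $\alpha(J_{12}-A_{12})+(J_{22}-A_{22})-\alpha(J_{11}-A_{11})\Sigma_{11}^{-1}\Sigma_{12}-(J_{21}-A_{21})\Sigma_{11}^{-1}\Sigma_{12}$ for $\hat M\Sigma_c$, and then adding transposes and $2\hat C\hat C^T=2(PAP^T)_{22}$ yields $R+R^T$ with $R=(\alpha+\Sigma_{21}\Sigma_{11}^{-1})\bigl((J_{11}+A_{11})\alpha^T+(J_{12}+A_{12})\bigr)$, giving the skew-symmetry criterion. Your route avoids the explicit $\Sigma^{-1}$ block inverse and the cancellation of $\Sigma_c^{-1}$, which is a modest economy; the paper's route makes the form of $\hat M$ itself explicit (which it also reuses in Proposition~\ref{prop:genOU-choiceK} for the mean comparison when $K=A-J$). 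Either way, the argument is sound.
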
 

\begin{proof}
Since $Y^1_t \equiv Y^1_0 $ for the limiting dynamics~\eqref{eq:genOU-SC-limit-Expl}, the delta measure in $y^1$ follows. The invariant measure for $Y^2_t$ (for fixed value of $Y^1$) follows by using Proposition~\ref{prop:invmeasOU}. \\
Next we want to discuss conditions under which $\hat\Sigma=\Sigma_c$ for various choices of $K$.
First note that, using $A = CC^T$ as in~\eqref{eq:genOU-SC-Ham}, we have
\begin{equation} \label{eq:CC^T-form}
\begin{aligned}
   \hat C \hat C^T &= (\alpha C_{11} + C_{21}) (\alpha C_{11} + C_{21})^T   + (\alpha C_{21} + C_{22}) (\alpha C_{21} + C_{22})^T \\
    &=  \alpha A_{11} \alpha^T + \alpha A_{12} + A_{21} \alpha^T + A_{22} = (PAP^T)_{22}.
\end{aligned}
\end{equation}
This requires a study of the Lyapunov equation~\eqref{eq:Steady-Lyap} which we now compute explicitly. Using block-matrix inversion, 
\begin{equation}
    \Sigma^{-1}=\begin{pmatrix}
\Sigma_{11} & \Sigma_{12} \\
\Sigma_{21} & \Sigma_{22}    
\end{pmatrix}^{-1}
        = \begin{pmatrix}
            \Sigma_{11}^{-1} - \Sigma_{11}^{-1} \Sigma_{12} \Sigma_c^{-1} \Sigma_{21} \Sigma_{11}^{-1}   
            & -\Sigma_{11}^{-1}\Sigma_{12}\Sigma_c^{-1}\\
            -\Sigma_c^{-1}\Sigma_{21}\Sigma_{11}^{-1}
            & \Sigma_c^{-1}
        \end{pmatrix} \label{eq:inversesigma}
\end{equation}
where $\Sigma_c=\Sigma_{22}-\Sigma_{21}\Sigma_{11}^{-1}\Sigma_{12}$ and we have used $\Sigma^T_{12}=\Sigma_{21}$. Using the definition of $M$, it follows that 
\begin{align*}
    M_{12} &=(J_{11}-A_{11})(\Sigma^{-1})_{12} + (J_{12}-A_{12})(\Sigma^{-1})_{22},\\
    M_{22} &=(J_{21}-A_{21})(\Sigma^{-1})_{12} + (J_{22}-A_{22})(\Sigma^{-1})_{22},
\end{align*}
and therefore, using the explicit form of $\Sigma^{-1}$ in \eqref{eq:inversesigma}, we can expand $\hat M=\alpha M_{12} + M_{22}$ to arrive at  
\begin{equation*}
\hat M = \Bigl[  -\alpha (J_{11}-A_{11}) \Sigma_{11}^{-1}\Sigma_{12} + \alpha(J_{12}-A_{12}) - (J_{21}-A_{21})\Sigma_{11}^{-1}\Sigma_{12} + (J_{22}-A_{22})
\Bigr] \Sigma_c^{-1}.
\end{equation*}
Since we are interested in showing that  $\hat \Sigma=\Sigma_c$, we need to show that the Lyapunov equation~\eqref{eq:Steady-Lyap} holds, i.e.\  
\begin{equation}\label{eq:mod-Lyap}
2\hat C\hat C^T  + \hat M\Sigma_c+ \Sigma_c \hat M^T = 0\,. 
\end{equation}
Using the explicit formulae above and~\eqref{eq:CC^T-form} we have
\begin{equation}\label{eq:mod-Lyap-LHS}
\begin{aligned}
    2&\hat C\hat C^T +\hat M\Sigma_c+ \Sigma_c \hat M^T \\ 
    &= 2\alpha A_{11} \alpha^T + 2\alpha A_{12} + 2 A_{21} \alpha^T + 2 A_{22} \\
    & \ \  -\alpha (J_{11}-A_{11})\Sigma_{11}^{-1}\Sigma_{12} + \alpha(J_{12}-A_{12}) - (J_{21}-A_{21}) \Sigma^{-1}_{11}\Sigma_{12} + (J_{22}-A_{22}) \\
    & \ \ - \Sigma_{21}\Sigma_{11}^{-1} (-J_{11}-A_{11})\alpha^T + (-J_{21}-A_{21})\alpha^T -\Sigma_{21}\Sigma_{11}^{-1} (-J_{12}-A_{12}) + (-J_{22}-A_{22})\\
    & = 2\alpha A_{11} \alpha^T  - \alpha(J_{11}-A_{11})\Sigma_{11}^{-1}\Sigma_{12} + \alpha(J_{12}+ A_{12}) 
    + \Sigma_{21}\Sigma_{11}^{-1} (J_{11}+A_{11})\alpha^T - (J_{21}-A_{21})\alpha^T \\
    & \ \ -(J_{21}-A_{21})\Sigma_{11}^{-1}\Sigma_{12} + \Sigma_{21}\Sigma_{11}^{-1}(J_{12}+A_{12})\\
    & = (\alpha+\Sigma_{21}\Sigma_{11}^{-1})(J_{11}+A_{11})\alpha^T + \Bigl( [\alpha+\Sigma_{21}\Sigma_{11}^{-1}](J_{11}+A_{11})\alpha^T \Bigr)^T\\
     &\quad +(\alpha+\Sigma_{21}\Sigma_{11}^{-1})(J_{12}+A_{12}) + \Bigl( (\alpha+\Sigma_{21}\Sigma_{11}^{-1})(J_{12}+A_{12}) \Bigr)^T \\
     &= \underbrace{(\alpha+\Sigma_{21}\Sigma_{11}^{-1}) \left( (J_{11} + A_{11})\alpha^T + (J_{12} + A_{12}) \right)}_{= R} + \underbrace{\left((\alpha+\Sigma_{21}\Sigma_{11}^{-1}) \left( (J_{11} + A_{11})\alpha^T + (J_{12} + A_{12}) \right) \right)^T}_{=R^T}\,,
\end{aligned}
\end{equation}
where the second equality follows since $A=A^T$ and $J=-J^T$ which implies that $J_{11}^T=-J_{11}$, $J_{12}^T=-J_{21}$, $J_{22}^T=-J_{22}$, and 
    \begin{align*}
         2\alpha A_{11} \alpha^T &= \alpha (A_{11}+J_{11})\alpha^T + \alpha(A_{11}-J_{11})\alpha^T .
    \end{align*}
The Lyapunov equation \eqref{eq:mod-Lyap} is thus satisfied if and only if the last line in \eqref{eq:mod-Lyap-LHS} equates to zero.  In case $\alpha\neq 0$, this requires $R \in \R^{(d-k) \times (d-k)}$ to be skew symmetric. 
The condition for $\hat m=m_c$ follows directly from the definitions of $\hat m$ and $m_c$. 
\end{proof}

\subsection{Confinement mechanism}
\label{ssec:MatrixK}

The general conditions outlined in the result above do not provide intuition about the structure of $K$, which is required to ensure that the limiting invariant measure $\hat\mu^{\eps=0}$\eqref{eq:genOU-limSteSta} matches the conditional distribution $\mu_c$~\eqref{eq:genOU-condSteSta}. However, there are some natural choices for $K$, such as $K=A,A-J$, which appear in the literature. 

Let us first consider the choice $K=A$. Simple choices  of the matrices $A,J,\,\Sigma$ lead to $\hat\mu^{\eps=0} \neq \mu_c$. For instance, if 
\[A=\Sigma=I\quad\textrm{and} \quad J=\begin{pmatrix} 0 & -I \\ I & 0 \end{pmatrix}\,,
\]
i.e.\ the invariant measure for the unconstrained OU process is $\mu=\mathcal N(0,I)$, then $b =\hat m \neq m_c=0$ for any $b\neq 0$.\footnote{To be precise we also need that the original and coarse-grained space are even dimensional, i.e. $d,k$ are even naturals.} 
Alternatively, for $\beta>1$ and 
\[
A=I\,,\quad \Sigma=\begin{pmatrix}I & I \\ I & \beta I \end{pmatrix}\,,\quad J=\begin{pmatrix} 0 & J_{12} \\ -J_{12}^T & 0 \end{pmatrix}\,,
\] 
with $J_{12}\neq -J^T_{12}$, we have that $\hat\Sigma\neq \Sigma_c$.

The result below discusses other admissible choices for $K$. Note that the choice $K=-(J-A)$ is expected to work since in this special case the softly constrained OU process~\eqref{eq:genOU-SC-Ham} admits the unique invariant measure
\begin{equation*}
    \mu^\eps(dx) = Z^{-1}\exp\biggl(-\frac12 x^T \Sigma^{-1}x - \frac{1}{2\eps}|x^1-b|^2 \biggr),
\end{equation*}
where $Z=Z^\eps$ is a normalisation constant, which is expected to converge to 0 as $\eps\to 0$ since the (unnormalised) Gaussian density becomes singular. This is made precise in the following result. 

\begin{prop}\label{prop:genOU-choiceK}
Under the assumptions in Proposition~\ref{prop:genOU-SteSt} the following holds.
\begin{enumerate}[label=(\roman*)]  
    \item\label{item:K=A-J} If $K=A-J$ then $\hat\mu^{\eps=0}=\mu_c$. In particular if $J=0$ and $K=A$ then $\hat\mu^{\eps=0}=\mu_c$.  
    \item\label{item:K=Sigma} If $K=\Sigma$ then $\hat\Sigma=\Sigma_c$. In general, $\hat m\neq m_c$ for $b\neq 0$. 
\end{enumerate}
\end{prop}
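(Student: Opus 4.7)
The plan is to apply the two criteria already established in Proposition~\ref{prop:genOU-SteSt}: $\hat\Sigma=\Sigma_c$ iff $R+R^T=0$, where
\[
R := (\alpha+\Sigma_{21}\Sigma_{11}^{-1})\bigl((J_{11}+A_{11})\alpha^T + (J_{12}+A_{12})\bigr),
\]
and, for well-prepared initial datum $Y^1_0=b$, $\hat m=m_c$ iff $\hat M\,\Sigma_{21}\Sigma_{11}^{-1} + \alpha M_{11} + M_{21}=0$. Each assertion then reduces to inserting the prescribed $K$ and either verifying or refuting these two identities.

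For part (ii) with $K=\Sigma$ the covariance identity is immediate: $\alpha=-K_{21}K_{11}^{-1}=-\Sigma_{21}\Sigma_{11}^{-1}$, so the first factor of $R$ vanishes and $R=0$. For part (i) with $K=A-J$, I use $A=A^T$ and $J=-J^T$ to compute $K_{11}=A_{11}-J_{11}$ and $K_{21}=A_{21}-J_{21}=(A_{12}+J_{12})^T$, which upon transposition gives $\alpha^T=-(A_{11}+J_{11})^{-1}(A_{12}+J_{12})$. Then
\[
(J_{11}+A_{11})\alpha^T+(J_{12}+A_{12})=-(A_{12}+J_{12})+(A_{12}+J_{12})=0,
\]
so here the second factor of $R$ vanishes, and again $\hat\Sigma=\Sigma_c$.

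For the mean identity I plan to rewrite $\alpha M_{11}+M_{21}$ using the block-inversion formula~\eqref{eq:inversesigma} for $\Sigma^{-1}$ together with $M=(J-A)\Sigma^{-1}$. A direct computation, strictly analogous to the one used for $\hat M=\alpha M_{12}+M_{22}$ in the proof of Proposition~\ref{prop:genOU-SteSt}, yields
\[
M_{i1}=(J-A)_{i1}\Sigma_{11}^{-1}-M_{i2}\Sigma_{21}\Sigma_{11}^{-1}\qquad(i=1,2),
\]
hence $\alpha M_{11}+M_{21}=\bigl[\alpha(J-A)_{11}+(J-A)_{21}\bigr]\Sigma_{11}^{-1}-\hat M\,\Sigma_{21}\Sigma_{11}^{-1}$. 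The mean condition thus reduces to $\alpha(J-A)_{11}+(J-A)_{21}=0$. For $K=A-J$ this holds because $\alpha=-(A-J)_{21}(A-J)_{11}^{-1}=-(J-A)_{21}(J-A)_{11}^{-1}$; this completes (i), and the special case $J=0$, $K=A$ follows as a direct consequence.

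To finish (ii), I would exhibit an explicit counterexample: taking $\Sigma=I$ gives $m_c=0$ and $\alpha=0$, so the reduced mean condition becomes $(J-A)_{21}=0$; choosing any $A,J$ with $(J-A)_{21}\neq 0$ yields $\hat m\neq 0 = m_c$ for every $b\neq 0$. The main obstacle throughout is the careful bookkeeping of the block identities for $M_{11}$ and $M_{21}$, where the sign conventions from $A=A^T$ versus $J=-J^T$ interact; once those identities are in place, the rest of the argument is a short algebraic check that reuses the skew-symmetry computation already carried out for $\hat\Sigma$.
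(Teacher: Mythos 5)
Your proposal is correct and follows essentially the same route as the paper: apply the two criteria from Proposition~\ref{prop:genOU-SteSt}, show that $R=0$ by exhibiting which factor vanishes for each choice of $K$ (the second factor when $K=A-J$, the first when $K=\Sigma$), and then verify the mean condition via the block-inversion formula~\eqref{eq:inversesigma} and $M=(J-A)\Sigma^{-1}$. The only cosmetic difference is that you package the mean computation into the general block identity $M_{i1}=(J-A)_{i1}\Sigma_{11}^{-1}-M_{i2}\Sigma_{21}\Sigma_{11}^{-1}$, which reduces $\hat m=m_c$ to the clean criterion $\alpha(J-A)_{11}+(J-A)_{21}=0$, whereas the paper computes $\hat M$ and $\alpha M_{11}+M_{21}$ explicitly for $K=A-J$ and reads off the cancellation directly; your version is a marginally tidier reorganization of the same calculation rather than a different argument.
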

\begin{rem}
    Let us mention that in Proposition \ref{prop:genOU-choiceK} it is enough to assume that either $\hat M$ is Hurwitz or $(\hat C, \hat M)$ is controllable, as the other assumption is implied. This follows because we  have a positive definite solution to the Lyapunov equation given by $\Sigma_c$.
\end{rem}
\begin{proof}
Recall from the proof of Proposition~\ref{prop:genOU-SteSt} that $\hat\Sigma=\Sigma_c$ if and only if $R=-R^T,$ where $R \in \R^{(d-k)\times (d-k)}$ is defined in~\eqref{eq:mod-Lyap-LHS}.
The simplest choice would be $R=0$. Since $R$ is given by the product of two factors, we can choose $\alpha$ such that one factor vanishes to yield $R=0$; this corresponds to two the following choices
\begin{enumerate}[leftmargin=*,label=(\roman*)]
\item $K=A-J$ which results in $\alpha=-(J_{21}-A_{21})(J_{11}-A_{11})^{-1}$,
\item $K=\Sigma$ which results in $\alpha=-\Sigma_{21}\Sigma_{11}^{-1}$.
\end{enumerate}    
Note that the previous asymptotic results only hold if $-K_{11}$ is Hurwitz, which guarantees that $J_{11}-A_{11}$ is invertible when $K=A-J$.    
Recalling Proposition~\ref{prop:genOU-SteSt}, we shall now compare the means 
\begin{equation*}
 m_c=\Sigma_{21} \Sigma_{11}^{-1}b \ \text{ and } \ \hat m = -\hat M^{-1}(\alpha M_{11} + M_{21})b.
\end{equation*}
For $K=A-J$, using the explicit expressions for $\Sigma^{-1}$ in \eqref{eq:inversesigma} we have
     \begin{align*}
         \hat M &= \left[(A_{21} - J_{21})(A_{11} - J_{11})^{-1} (A_{12} - J_{12}) - (A_{22} - J_{22}) \right] \Sigma^{-1}_c\,, \\
          \alpha M_{11} + M_{21} &= -\left[(A_{21} - J_{21})(A_{11} - J_{11})^{-1} (A_{12} - J_{12}) - (A_{22} - J_{22}) \right] \Sigma^{-1}_c\Sigma_{21}\Sigma_{11}^{-1} 
     \end{align*}
and hence $\hat m= \Sigma_{21} \Sigma_{11}^{-1} b =m_c$. In summary, $\hat\mu^{\eps=0}=\mu_c$ if $K=A-J$.

For $K=\Sigma$, choosing $k,d$ even with $2k\leq d$, $b\neq 0$, $A=\Sigma=I$ and $J=-J^T$ the canonical symplectic matrix, we have $b =\hat m \neq m_c=0$. Therefore, in general, for the case $K=\Sigma$, $\hat\mu^{\xi=0}$ and $\mu_c$ need not agree.
\end{proof}

We briefly discuss the reversible case as an example, i.e. $A=A^T>0$,  $J=0$, and a soft constraint with $K=I$. We show that the projected dynamics does not necessarily leave $\mu_c$ invariant, which means that it is the structure of the noise rather than the reversibility or irreversibility of the dynamics that determines whether the invariant measure is robust under constraining or not. 

\begin{rem}[Reversible dynamics]
    Consider the case $J=0$ and $K=I$, such that $\alpha=-K_{21}K_{11}^{-1}=0$. For $\hat\Sigma=\Sigma_c$, Proposition \ref{prop:genOU-SteSt} states that the matrix 
$\Sigma_{21}\Sigma_{11}^{-1}A_{12}$ needs to be skew-symmetric, i.e.\
\begin{equation*}
    \Sigma_{21}\Sigma_{11}^{-1}A_{12} + A_{21}\Sigma_{11}^{-1}\Sigma_{12} = 0.
\end{equation*}
This is true if $A_{12}=0=A_{21}$ or $\Sigma_{12}=0=\Sigma_{21}$, in which case even $\mu^{\xi=0} = \mu_c$. An example, for which  $\mu^{\xi=0} \neq \mu_c$ is given by the matrices 
\[
A=\begin{pmatrix}
    I & I \\ I & \beta I
\end{pmatrix}\quad\textrm{and}\quad \Sigma=\begin{pmatrix}
    I & I \\ I & \theta I
\end{pmatrix}\,,
\]
with $\beta,\theta>1$. The matrices $A$ and $\Sigma$ are symmetric positive definite, and in the language of Markov chain Monte Carlo  algorithms (e.g.~\cite{girolami2011riemann}), the associated OU process is a preconditoned version of 
\[
d\tilde{X}_t = -\Sigma^{-1} \tilde{X}_t\,dt + \sqrt{2}\,dW_t\,,
\]
with uncorrelated noise. By construction, $\tilde{X}$ and the preconditioned system,
\[
dX_t = -A\Sigma^{-1} X_t\,dt + \sqrt{2A}\,dW_t\,,
\]
with correlated noise, have the same invariant measure $\mu=\cN(0,\Sigma)$, but the expression above gives
\begin{equation*}
    \Sigma_{21}\Sigma_{11}^{-1}A_{12} + A_{21}\Sigma_{11}^{-1}\Sigma_{12} = 2I \neq 0,
\end{equation*}
which entails $\mu^{\xi=0} \neq \mu_c$. Note that if instead we had chosen $A=I=K$ and $J=0$ we would have been back to the case \ref{item:K=A-J} and 
 $\mu^{\xi=0} = \mu_c$.
\end{rem}

The proof of Proposition~\ref{prop:genOU-choiceK} suggests that there are infinitely many choices for $K$ which lead to $\hat\mu^{\eps=0}=\mu_{c}$ which is made precise in the following result. 

\begin{prop}
Under the assumptions in Proposition~\ref{prop:genOU-SteSt} the following holds.
\begin{enumerate}[label=(\roman*)]  
\item \label{case:infKA-J} If $K=\begin{pmatrix} K_{11} & * \\ (A-J)_{21}(A-J)_{11}^{-1}K_{11} & * \end{pmatrix}$ with $(A-J)_{11}$  invertible, then $\hat\mu^{\eps=0}=\mu_c$.
\item\label{case:infKSigma} If $K=\begin{pmatrix}K_{11} & * \\ \Sigma_{21}\Sigma_{11}^{-1}K_{11} & * \end{pmatrix}$ then $\hat\Sigma=\Sigma_c$. In general, $\hat m\neq m_c$ for $b\neq 0$. 
\item\label{case:infKSpec} If $\Sigma_{12}=0$ then $K=\begin{pmatrix} K_{11} & * \\ (A_{21}-J_{21}) (A_{11} + \bar J )^{-1}K_{11} & * \end{pmatrix}$ where  $\bar J \in \R^{k \times k}$ is any skew symmetric matrix such that $(A_{11} + \bar J)$ is invertible, then $\hat\Sigma=\Sigma_c$.
\end{enumerate}
\end{prop}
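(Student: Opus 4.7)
The approach is to reduce every case to checking the skew-symmetry criterion of Proposition~\ref{prop:genOU-SteSt}: $\hat\Sigma=\Sigma_c$ is equivalent to the matrix
\begin{equation*}
R=(\alpha+\Sigma_{21}\Sigma_{11}^{-1})\bigl((J_{11}+A_{11})\alpha^T+(J_{12}+A_{12})\bigr)
\end{equation*}
being skew-symmetric, with $\alpha=-K_{21}K_{11}^{-1}$. The first observation is that the blocks $K_{12},K_{22}$ (marked $*$) never enter the limiting dynamics~\eqref{eq:lim-HamForm}, because $P$ and hence $\hat M,\hat C$ depend on $K$ only through $\alpha$, which in turn only uses $K_{11}$ and $K_{21}$. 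Therefore in all three cases we may take $*$ arbitrary and work with the specific $\alpha$ dictated by $K_{21}$.

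For case~\ref{case:infKA-J}, the prescribed $K_{21}$ yields $\alpha=-(A-J)_{21}(A-J)_{11}^{-1}$. Using $A^T=A$ and $J^T=-J$, we have $(A-J)_{11}^T=A_{11}+J_{11}$ and $(A-J)_{21}^T=A_{12}+J_{12}$, so $\alpha^T=-(A+J)_{11}^{-1}(A+J)_{12}$. A one-line substitution then shows the second factor of $R$ vanishes identically, giving $R=0$. To upgrade $\hat\Sigma=\Sigma_c$ to the full equality $\hat\mu^{\eps=0}=\mu_c$, I re-use the explicit expressions for $\hat M$ and $\alpha M_{11}+M_{21}$ derived in the proof of Proposition~\ref{prop:genOU-choiceK}\ref{item:K=A-J} (they depend only on $\alpha$ and $M$, so they are unchanged by arbitrary $*$ blocks), and observe that they factor through a common invertible bracket times $\Sigma_c^{-1}$, leaving $\hat M^{-1}(\alpha M_{11}+M_{21})=-\Sigma_{21}\Sigma_{11}^{-1}$; hence the mean criterion of Proposition~\ref{prop:genOU-SteSt} holds for every $b$.

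For case~\ref{case:infKSigma}, the prescribed $K_{21}$ produces $\alpha=-\Sigma_{21}\Sigma_{11}^{-1}$, which makes the first factor of $R$ vanish, so $R=0$ and $\hat\Sigma=\Sigma_c$. The mean fails in general for $b\neq 0$; to document this it suffices to exhibit a simple instance (for example the canonical-symplectic $J$ with $A=\Sigma=I$ and even dimensions, as already used after Proposition~\ref{prop:genOU-choiceK}) where the kernel condition on $b$ is violated.

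Case~\ref{case:infKSpec} is the one that requires genuine computation. Here $\Sigma_{12}=0$ collapses the first factor of $R$ to $\alpha$, so I must show $R=\alpha\beta$ with $\beta=(J_{11}+A_{11})\alpha^T+(J_{12}+A_{12})$ is skew-symmetric. Expanding $R+R^T$ and cancelling the $J_{11}$ pieces (since $J_{11}^T=-J_{11}$) leaves
\begin{equation*}
R+R^T=2\alpha A_{11}\alpha^T+\alpha(A_{12}+J_{12})+(A_{21}-J_{21})\alpha^T.
\end{equation*}
The defining relation $\alpha(A_{11}+\bar J)=-(A_{21}-J_{21})$ together with its transpose $(A_{11}-\bar J)\alpha^T=-(A_{12}+J_{12})$ (using $\bar J^T=-\bar J$, $A_{11}^T=A_{11}$, $J_{21}^T=-J_{12}$) gives two expressions for $\alpha A_{11}\alpha^T$; averaging them shows $2\alpha A_{11}\alpha^T=-(A_{21}-J_{21})\alpha^T-\alpha(A_{12}+J_{12})$, which when inserted above yields $R+R^T=0$. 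The role of the skew perturbation $\bar J$ is only to keep $A_{11}+\bar J$ invertible, and I expect the one real step of bookkeeping here to be tracking the signs through the transposition. No further analytic difficulty arises.
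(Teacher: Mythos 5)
Your proposal is correct and follows essentially the same route as the paper: reduce all three cases to the skew-symmetry criterion $R+R^T=0$ of Proposition~\ref{prop:genOU-SteSt}, note that the unspecified blocks $*$ never enter $\alpha=-K_{21}K_{11}^{-1}$, and in case \ref{case:infKSpec} use the relation $\alpha(A_{11}+\bar J)=-(A_{21}-J_{21})$ together with its transpose to kill $2\alpha A_{11}\alpha^T$ — which is the same cancellation the paper achieves by splitting $2\alpha A_{11}\alpha^T=\alpha(A_{11}+\bar J)\alpha^T+\alpha(A_{11}-\bar J)\alpha^T$. The lift from $\hat\Sigma=\Sigma_c$ to $\hat\mu^{\eps=0}=\mu_c$ in case \ref{case:infKA-J} by reusing the $\hat M$, $\alpha M_{11}+M_{21}$ formulae from Proposition~\ref{prop:genOU-choiceK} is exactly what the paper does.
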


Before we prove this proposition let us give some intuition about its meaning. It provides us with infinitely many choices for the matrix $K$ in the soft-constrained problem \eqref{eq:genOU-SC-Ham}, so that we sample the correct target conditional measure. 
To be more precise, the choice of $-K_{11}$  is free as long as it is Hurwitz. Vividly speaking this means that $X^1$ can approach the constraint manifold $\xi^{-1}(0)$ in any way, e.g. via a gradient flow ($K_{11}=I$) or spiralling towards it (e.g. $K_{11} = I + J$, where $J=-J^T\neq 0$).  
On the other hand the choice for $K_{21}$ is not free at all and is dictated by the first $k$ columns of the matrices $A, J$ (case \ref{case:infKA-J})and $\Sigma$ (case \ref{case:infKSigma}) as well as $K_{11}$. This special form of $K_{21}$ guarantees that $X^1$ enters the $X^2$ dynamics in the exact same way as in Proposition \ref{prop:genOU-choiceK}.
Case~\ref{case:infKSpec} tells us that if the invariant measure of the unconstrained process~\eqref{eq:genOU-SC-Ham} with $K=0$ is uncorrelated in the first $k$ and remaining $d-k$ dimensions, we can freely choose $K_{21}$ via the skew symmetric matrix $\bar J \in \R^{k \times k}.$

\begin{proof}
The proof of the first two cases is as in the proof of Proposition~\ref{prop:genOU-choiceK}, realizing that $\alpha$ is unchanged.

If $\Sigma_{12}=0$, then for any $\bar J = -\bar J^T \in \R^{d\times d}$ the Lyapunov equation~\eqref{eq:mod-Lyap} can be written as (cf.~\eqref{eq:mod-Lyap-LHS})
\begin{align*}
    2 &\alpha A_{11} \alpha^T + \alpha(J_{12} + A_{12}) + (A_{21} - J_{21})\alpha^T 
     \\ &= \alpha \bigl[(\bar J + A_{11}) \alpha^T + (J_{12} + A_{12})\bigr] + \left(\alpha \bigl[(\bar J + A_{11}) \alpha^T + (J_{12} + A_{12})\bigr] \right)^T =0 \,, 
\end{align*}
where we have used $\bar J = - \bar J^T \in \R^{d \times d}$, which gives
\[2\alpha A_{11}\alpha^T = \alpha (A_{11} + \bar J)\alpha^T + \alpha (A_{11} - \bar J)\alpha^T.\]

Hence, it is easy to check that for any skew symmetric $\bar J $ such that $A_{11} + \bar J$ is invertible, $\alpha = -(A_{21}- J_{21})(A_{11} + \bar J)^{-1}$ satisfies \eqref{eq:mod-Lyap} and hence leads to the correct covariance $\hat \Sigma = \Sigma_c$.  Equivalently, for a given Hurwitz matrix $K_{11} \in \R^{d \times d}$, choose 
\[
K=\begin{pmatrix}
    K_{11} & * \\ (A-J)_{21} (A-\bar J)_{11}^{-1}K_{11} & * \end{pmatrix}\,.
\]
\end{proof}

\section{Numerical illustration}
 \label{sec:numEx}

We consider two illustrative examples that demonstrate (a) the uniform  pathwise convergence of the softly contrained process on any time interval $[\delta, T]$ for some small $\delta>0$ and (b) the preservation of the invariant measure under hard and soft constraints for a suitably chosen matrix $K$.

\subsection{Particle coupled to a heat bath}

Consider the $(1+2n)$-dimensional system
\begin{equation}\label{heatbath}
    dX_t = (J - A)X_t\,dt + \sqrt{2}C\,dW_t\,,
\end{equation}
with $W=(V,U)$ denoting $(1+n)$-dimensional Brownian motion and 
\[
J-A = \begin{pmatrix}
   -L & 0 & \lambda^T\\0 & 0 & I\\-\lambda & -I & -\gamma
\end{pmatrix}\in\R^{(1+2n)\times (1+2n)}\,,\quad 
C = \begin{pmatrix}
    \sqrt{L} & 0\\ 0 & 0 \\ 0 & \sqrt{\gamma}
\end{pmatrix}\in\R^{(1+2n)\times (1+n)}\,.
\]
where $L>0$, $\gamma\in\R^{n\times n}$ symmetric positive definite, and $\lambda\in\R^{n}$. It can be readily seen that under these assumptions, the matrix $J-A$ is stable, and the pair $(J-A,C)$ is completely controllable. 
In what follows we use the shorthands $X^1=\zeta\in\R$ and $X^2=(q,p)\in\R^{2n}$, and we consider the codimension 1 constraint 
\[
\zeta = 0\,.
\]
The softly constrained system now reads
\begin{equation}\label{heatbath_eps}
    dX_t = (J - A)X_t\,dt - \frac{1}{\eps}K X_t\,dt + \sqrt{2}C\,dW_t\,,
\end{equation}
with 
\[
K = \begin{pmatrix}
    1 & 0 & 0\\0 & 0 & 0\\ 0 & 0 & 0 
    \end{pmatrix}\in\R^{(1+2n)\times (1+2n)}\,.
\]

Following Theorem \ref{thm:genOU-SC}, the limit system can be recast as
\begin{equation}\label{heatbath_lim}
    dY^2_t = (\bar{J} - \bar{A})Y^2_t dt + \sqrt{2}\bar{C}dU_t
\end{equation}
where $Y^2$ denotes the limit of the unconstrained component $X^2=(q,p)$, and $U$ denotes standard $n$-dimensional Brownian motion. The coefficients of the constrained system read
\[
\bar{J} - \bar{A} = \begin{pmatrix}
    0 & I\\ -I & -\gamma 
\end{pmatrix}\in\R^{2n\times 2n}\,,
\quad \bar C = \begin{pmatrix}
    0\\ \sqrt{\gamma}
\end{pmatrix}\in\R^{2n\times n}\,.
\]

\begin{figure}
    \centering
    \includegraphics[width=0.495\linewidth]{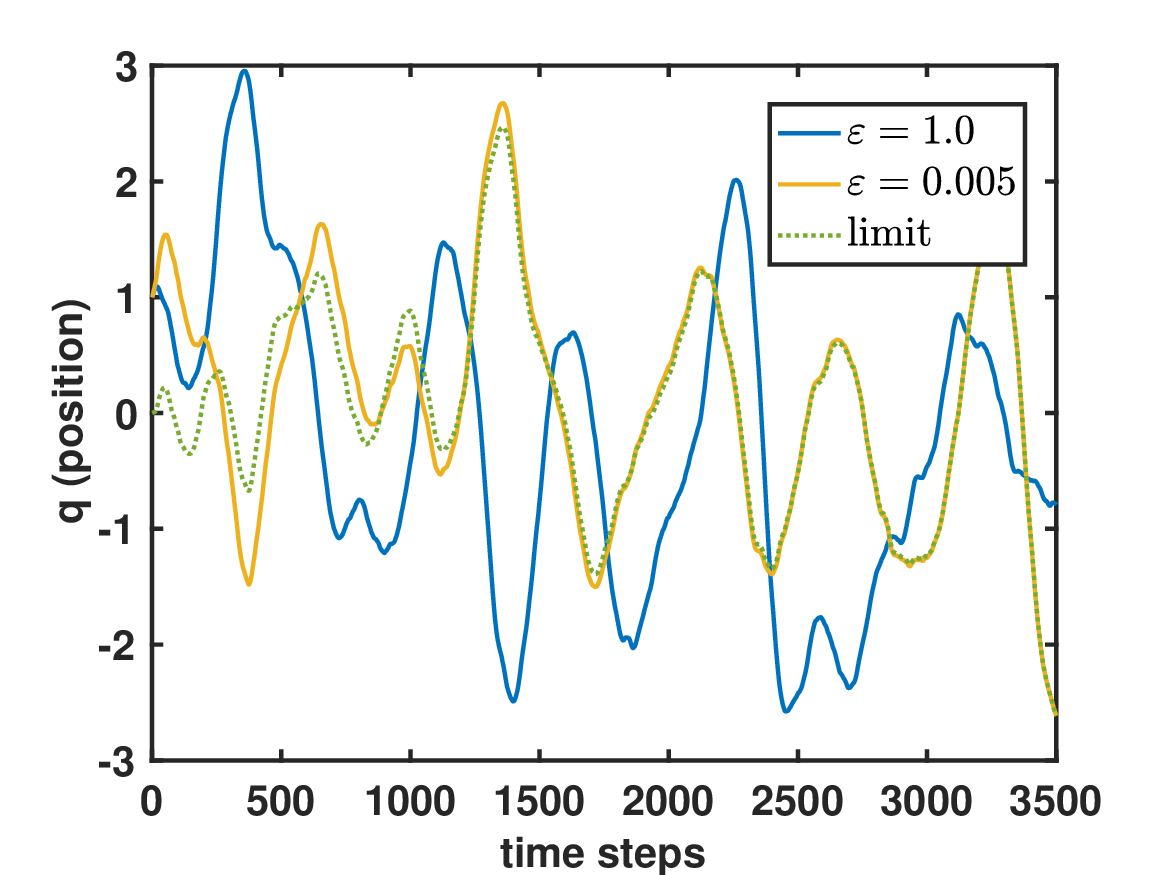}
    \includegraphics[width=0.495\linewidth]{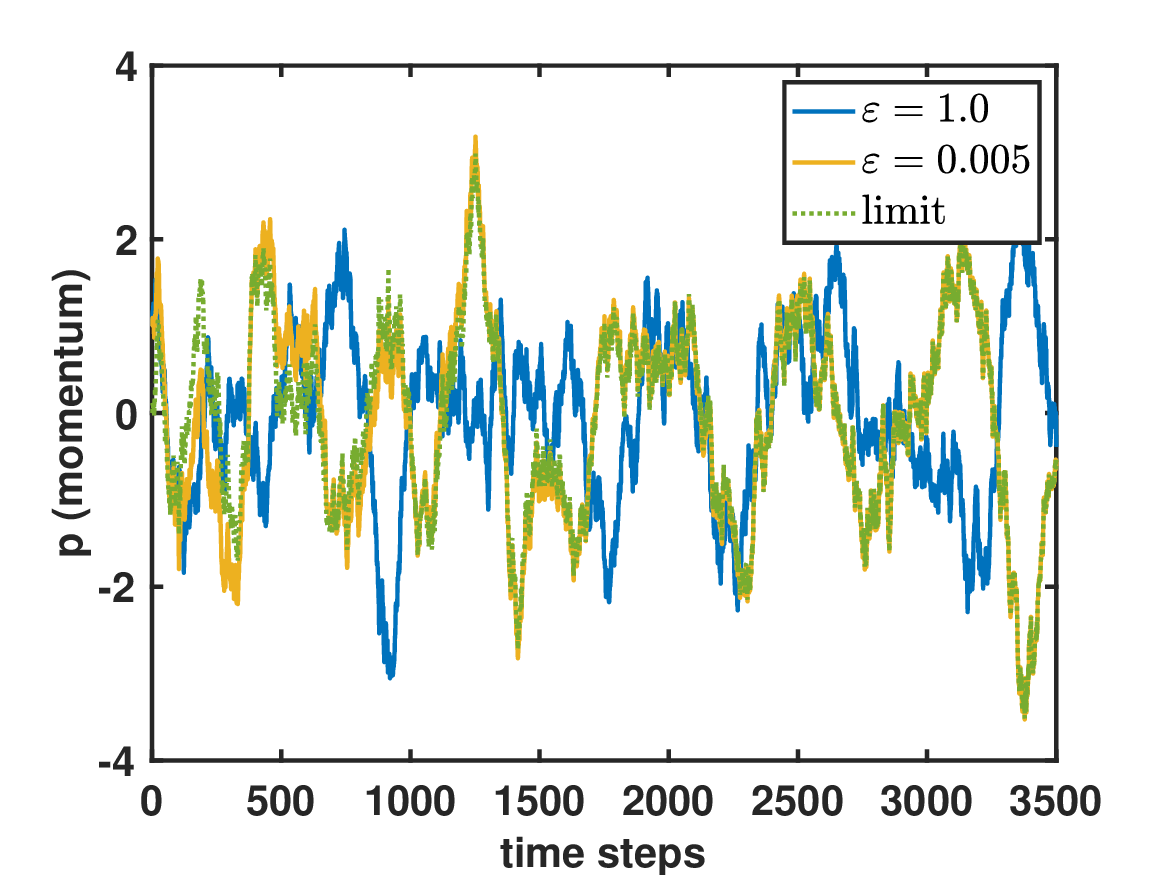}
    \caption{Typical realisations of the softly constrained  heat bath model (\ref{heatbath_eps}) for $n=1$ and its limit (\ref{heatbath_lim}) for the constrained thermostat variable $X^1=0$. The left panel shows the position variable $q$, the right panel shows the momentum variable $p$.}
    \label{fig:heatbath1}
\end{figure}

Figure \ref{fig:heatbath1} shows typical realisations of the softly constrained 3-dimensional heat bath model (\ref{heatbath_eps}) for $\eps=1$ and $\eps=0.005$. In the simulation, the initial condition of the constrained variable $X^1=\zeta$ has been set to $X^1_0\neq 0$, which is inconsistent with the requirement $\zeta=0$. Both panels, for the position and variable $q$ (left) and the momentum variable $p$ (right) display the transient initial layer phenomenon for inconsistent initial conditions that is in accordance with Remark \ref{rem:genOU-IC}.


For $\gamma>0$, the softly constrained dynamics has a unique Gaussian invariant measure with positive definite covariance matrix $\Sigma_\eps$  for all $\eps>0$. Even for $d=3$, the covariance matrix has a complicated expression for $\eps>0$, with correlations between the thermostat variable $X^1=\zeta$ and position and momentum variables. Yet, the limit $\eps\to 0$ is consistent with the constrained dynamics, in that
\[
\lim_{\eps\to 0}\Sigma_\eps = \begin{pmatrix}
    0 & 0 & 0\\0 & 1 & 0\\ 0 & 0 & 1 
    \end{pmatrix}\,.
\]
(Here and in what follows, we confine our attention the 3-dimensional case.) As a consequence, the approximation of the time marginal $\mathrm{law}(q_t,p_t)$ by the constrained dynamics, $Y_t$ is uniform in time.

\subsubsection*{Deterministic limit case $\gamma=0$ and nonuniform-in-time approximation}

The situation changes when $\gamma=0$, where we confine the following discussion to the case $n=1$. The matrix $J-A$ is  Hurwitz and so is $J-A- \frac{1}{\eps}K$ for every $\eps>0$ (cf.~\cite{maddocks1995stability}), moreover the pair $(J-A,C)$ is completely controllable. The softly constrained dynamics has a unique zero-mean Gaussian invariant measure $\rho^\eps$ with $3\times 3$ covariance matrix 
\[
\tilde{\Sigma}_\eps=\frac{\eps}{1+\eps} I
\]
that converges to the zero matrix as $\eps\to 0$. As a consequence, the finite time marginal of the process converges in distribution to a Dirac centred at $X=0$, in other words: 
\[
\rho^\eps \stackrel{*}{\rightharpoonup} \delta_0\quad\textrm{as}\quad  \eps\to 0\,.
\]
On the other hand, the dynamics (\ref{heatbath}) for $\gamma=0$ and subject to the constraint $X^1=0$ is the deterministic Hamiltonian system for $X^2=(q,p)$:
\begin{equation}\label{hamiltonODE}
\begin{aligned}
    \dot{q} & = p\\ \dot{p} & = -q\,.
\end{aligned}
\end{equation}
The dynamics has infinitely many invariant measures, and since (\ref{hamiltonODE}) conserves the energy
\[
H(q,p) = \frac{1}{2}\left(q^2 + p^2\right)\,,
\]
none of these invariant measures of (\ref{hamiltonODE}) agrees with the weak--* limit of $\rho^\eps$, unless $q_0=p_0=0$. Nevertheless, assuming consistent initial conditions for $X^1$, the softly constrained dynamics converges to (\ref{hamiltonODE}) in a pathwise sense, uniformly on any compact time interval $[0,T]$. While this is in accordance with Theorem \ref{thm:genOU-SC}, it implies that the approximation cannot be uniform in time if $\gamma=0$.

Indeed, as the left panel of Figure \ref{fig:heatbath3} shows, the soft constraint dynamics for initial conditions $X^1=0$ and $X^2=(1,1)$ converges uniformly on $[0,T]$ to the solution of the Hamiltonian system (\ref{hamiltonODE}), which is a periodic motion on a circle of radius $\sqrt{2}$. Conversely, it is demonstrated in the right panel of the figure that the long term dynamics for small $\eps$ is dissipative and spirals into the unique fixed point at $(q,p)=(0,0)$. The numerical integration of the stochastic dynamics has been carried out with an Euler-Maruyama scheme with step size $h=10^{-4}$, whereas the deterministic dynamics has been discretised using a symplectic Euler method that guarantees long-term stability and approximate energy conservation. 
\begin{figure}
    \centering
    \includegraphics[width=0.495\linewidth]{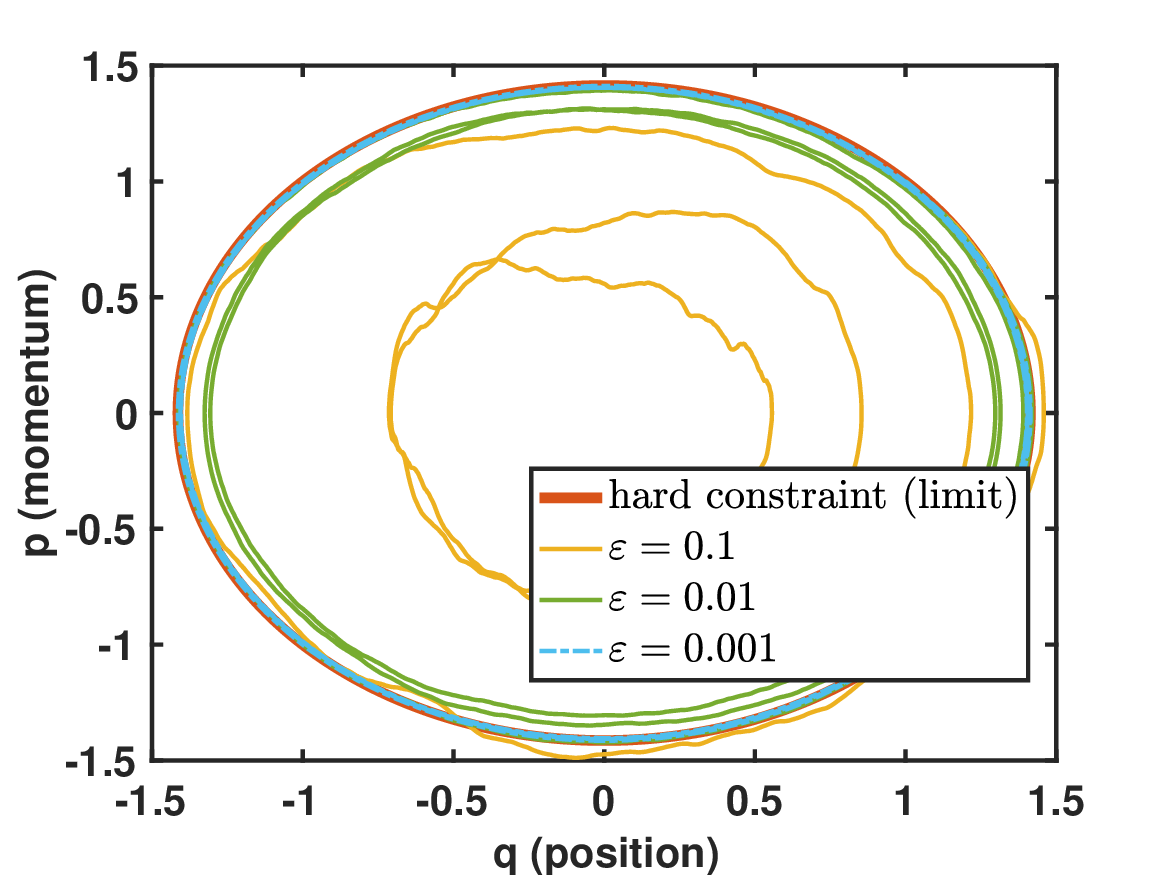}
    \includegraphics[width=0.495\linewidth]{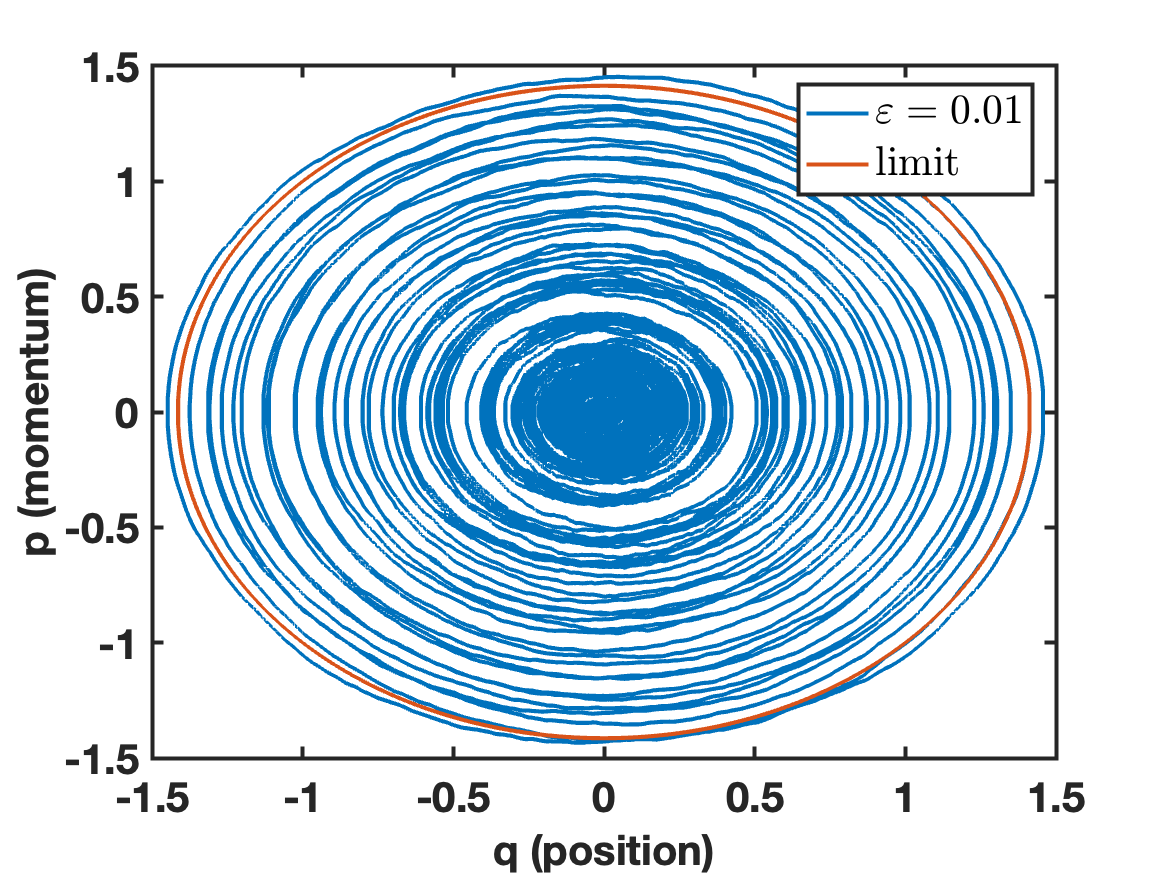}
    \caption{On any bounded time interval, the softly constrained dynamics for $\gamma=0$ converges pathwise to the the limit dynamics (left panel), while the long term dynamics for small $\eps$ departs from the deterministic Hamiltonian limit dynamics and spirals inwards towards the limit invariant measure $\delta_0$ (right panel).}
    \label{fig:heatbath3}
\end{figure}

It is worth mentioning, that if one makes the correct choice for $K$ according to Proposition \ref{prop:genOU-choiceK}, e.g. 
\begin{align*}
    K = \begin{pmatrix}
        L & 0 & 0 \\
        0 & 0 & 0 \\
        \lambda & 0 & 0
    \end{pmatrix},
\end{align*} 
then the softly constrained dynamics admits the unique invariant measure $\rho^\eps=\mathcal{N}(0,\tilde{\Sigma}_\eps)$, with covariance
\[
\tilde{\Sigma}_\eps = \begin{pmatrix}
    \frac{\eps}{1+\eps} & 0 & 0\\0 & 1 & 0\\ 0 & 0 & 1 
    \end{pmatrix}\in\R^{3\times 3}\,. 
\]
For every $\eps>0$, the invariant measure $\rho^\eps$ is approached at an exponential rate that is determined by the real part of the principal eigenvalue of the system matrix
\begin{align*}
    (J-A)I - \frac{1}{\eps} K = \begin{pmatrix}
        -L(1+\frac{1}{\eps}) & 0 & \lambda^T \\
        0 & 0 & I \\
        -\lambda(1+\frac{1}{\eps}) & -I & 0
    \end{pmatrix},
\end{align*} 
(For the sake of transferrability of the heat bath model, we adopt the matrix notation from the multidimensional case $n>1$, even though $\lambda^T=\lambda\in\R$ for $n=1$.) While one eigenvalue that corresponds to the constrained direction diverges as $\mathcal{O}(-\eps^{-1})$, the matrix has another pair of eigenvalues given by  
\[
\nu_{1,2}\simeq -\frac{\lambda^2}{2}\pm\sqrt{\frac{\lambda^2}{4}-L}\quad\textrm{as}\quad \eps\to 0\,.
\] 
For example, for $\lambda=1$ and $L=1$, the spectral abscissa converges to $-1/2$. As a consequence, the (degenerate) Gaussian limit measure 
\[
\rho^0 = \delta_{z=0}\otimes \mathcal{N}(0,I_{2\times 2})\,,
\]
which is the weak--* limit of $\rho^\eps$ as $\eps\to 0$, is approached at a finite exponential rate. This is consistent with the limit dynamics that is given by the projected equation 
\begin{equation}\label{eq:proj-GLE}
        dY^2_t = (\bar{J}-\bar{A})Y^2_t dt + \sqrt{2}\bar{C} dW_t 
\end{equation}
for $Y^2=(q,p)$ that is derived from the oblique projection
\[
  P = \begin{pmatrix}
        0 & 0 & 0 \\
        0 & I & 0 \\
        - \lambda L^{-1}& 0 & I
    \end{pmatrix},
\]
with the resulting drift and diffusion coefficients (neglecting zero rows and columns)
\begin{align*}
    (\bar{J}-\bar{A}) = (P(J-A))_{2 2} =  \begin{pmatrix}
        0 & I \\ -I & - \lambda L^{-1}\lambda^T
    \end{pmatrix}\,, \quad \bar{C} = (PC)_2 = \begin{pmatrix}
        0 & \\ - \lambda L^{-1/2} & 
    \end{pmatrix}.
\end{align*}

As a consequence we recognise (\ref{eq:proj-GLE}) as an underdamped Langevin equation with unique invariant measure, given by the $(q,p)$-marginal of $\rho^0$. The negative sign in front of the diffusion coefficient is irrelevant for convergence of the law of paths, but it guarantees a pathwise approximation that is uniform in time.   

The convergence of the softly constrained dynamics to (\ref{eq:proj-GLE}) for all $t>0$ is confirmed by the numerical simulations shown in Figure \ref{fig:heatbath4}; the left panel shows a typical realisation for different values of $\eps$. For $\eps\to 0$ the realisation converges almost surely and uniformly on $[0,T]$. On the other hand, the long term dynamics correctly reproduces the (unnormalised) marginal density $\exp(-H)$ as the right panel of the figure shows.

\begin{figure}
    \centering
    \includegraphics[width=0.495\linewidth]{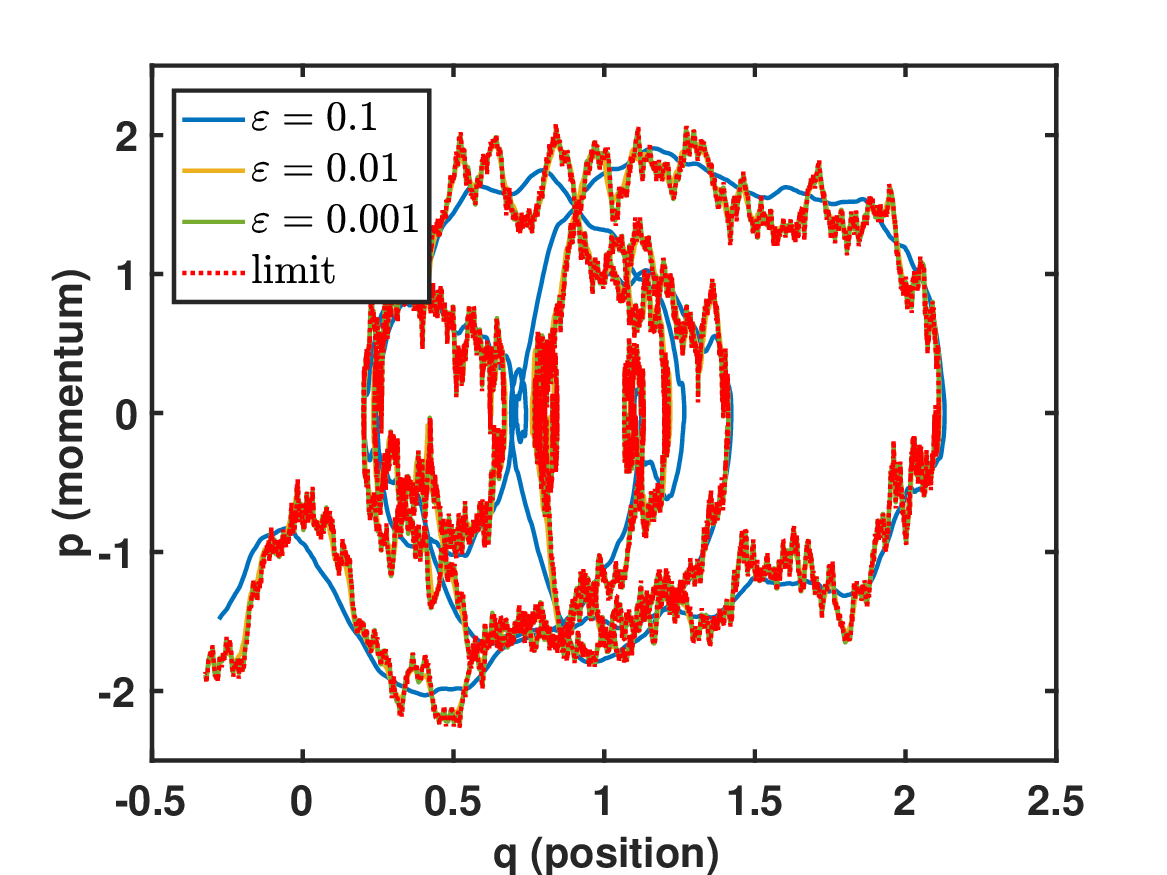}
    \includegraphics[width=0.495\linewidth]{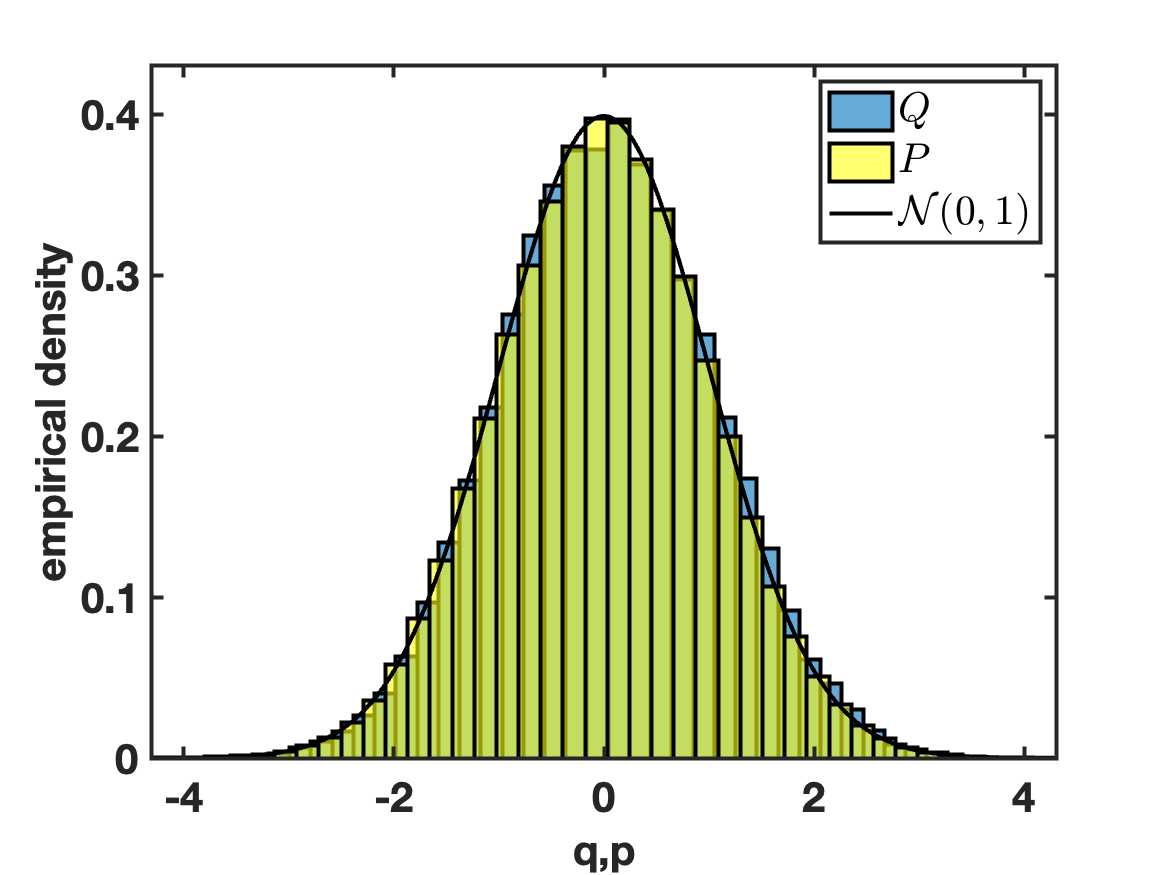}
    \caption{Left panel: typical realisations for $\eps\in\{0.1,\,0.01,\,0.001\}$ and the limiting underdamped Langevin dynamics (\ref{eq:proj-GLE}), with parameters $\lambda=1$ and $L=1$. Right panel: the long term dynamics correctly reproduces the (unnormalised) standard Gaussian marginal density $\exp(-H(q,p))$. The simulations have been carried out using the gle-BAOAB scheme \cite{leimkuhler2022efficient}  with step size $h=10^{-5}$.}
    \label{fig:heatbath4}
\end{figure}

\subsection{Computing Green's function by Monte Carlo}

As a second example, we consider the computation of the Green's function of a discretised elliptic differential operator on an bounded open domain, specifically we consider the symmetric operator
\[
\cL = \Delta - k^2
\]
on the unit interval $\Omega=(0,1)$, equipped with homogeneous Dirichlet boundary conditions. 
In the following, we do not distinguish between the infinite-dimensional operator $\cL$ as an operator on the domain $\mathcal{D}=H^1_0(0,1)\cap H^2(0,1)$ and its finite-difference approximation as a $d\times d$ matrix. We denote both, the linear operator and its finite-dimensional approximation by the same symbol $\cL$.  

Our aim is to obtain a Monte Carlo approximation of the Green's function, $G$, i.e.~the solution to the linear elliptic boundary value problem 
\begin{equation}\label{greens}
    \cL G(\cdot,a) = \delta_a\,,
\end{equation}
where $\delta_a$ denotes the Dirac delta function centered at $a\in(0,1)$. 
To this end we adopt ideas put forward in \cite{white2009green} and consider the OU process
\begin{equation}\label{elliptic}
    dX_t = (\cL X_t - g)\,dt + \sqrt{2}dW_t\,.
\end{equation}
Since the noise term is non-degenerate and $\cL$ is symmetric negative definite, the process converges to a unique Gaussian invariant measure with mean $\mu=\cL^{-1}g$ and covariance $-\cL^{-1}$. Setting $g=\delta_a$, we see that the mean formally agrees with the solution of the linear boundary value problem (\ref{greens}). 

For practical computations, we now discretise the unit interval $[0,1]$ into $d-1$ equally spaced intervals of length $\Delta u=(d-1)^{-1}$ and discretise the operator $\cL$ by finite differences, which yields 
\[
\cL = \frac{1}{(\Delta u)^2}\begin{pmatrix}
		-2 & 1 & 0 & \ldots & 0 \\ 1 &  -2  & 1 & \ldots & 0\\ 0 & \ddots & \ddots & \ddots & 0\\ \vdots & \vdots & \ddots & -2  & 1 \\ 0 & \ldots & 0 & 1 & -2  
	\end{pmatrix} - k^2 \begin{pmatrix}
		1 & 0 & 0 & \ldots & 0 \\ 0 &  1  & 0 & \ldots & 0\\ 0 & \ddots & \ddots & \ddots & 0\\ \vdots & \vdots & \ddots & 1 & 0 \\ 0 & \ldots & 0 & 0 & 1  
	\end{pmatrix}\in\R^{d\times d}\,.
\]
By construction, the matrix is Hurwitz, yet the solution does not satisfy the desired boundary conditions. Therefore we impose the constraint 
\[
X_1 = X_d = 0
\]
to implement homogeneous Dirichlet boundary conditions on the solution. Note that $X_1$ and $X_d$ constitute the constrained variable $X^1$, while the interior nodes $X_2,\ldots,X_{d-1}$ form the vector $X^2$. Yet, we refrain from permuting the states in order for the matrix $\cL$ to have the standard form of a discrete differential operator. The Green's function is then approximated by computing the stationary (ergodic) mean under hard and soft constraints where the softly constrained dynamics is governed by the SDE
\begin{equation}\label{elliptic_eps}
    dX_t = (\cL X_t - g)\,dt - \frac{1}{\eps}K X_t + \sqrt{2}dW_t\,,
\end{equation}
with 
\[
K = \begin{pmatrix}
		1 & 0 & 0 & \ldots & 0 \\ 0 &  0  & 0 & \ldots & 0\\ 0 & \ddots & \ddots & \ddots & 0\\ \vdots & \vdots & \ddots & 0  & 0 \\ 0 & \ldots & 0 & 0 & 1  
	\end{pmatrix}\in\R^{d\times d}\,.
\]
Since the resulting projection is orthogonal and the overall dynamics is reversible, the invariant measure of the constrained dynamics agrees with the soft constraint limit.  

Note that, since the first and the last component, $X_1$ and $X_d$, represent the boundary values of the sampled Green's function, i.e.~the constrained variables, the upper left and lower right entries of the matrix $K$ together form the invertible $K_{11}$ block of Theorem \ref{thm:genOU-SC}.    

\begin{figure}
    \centering
    \includegraphics[width=0.495\linewidth]{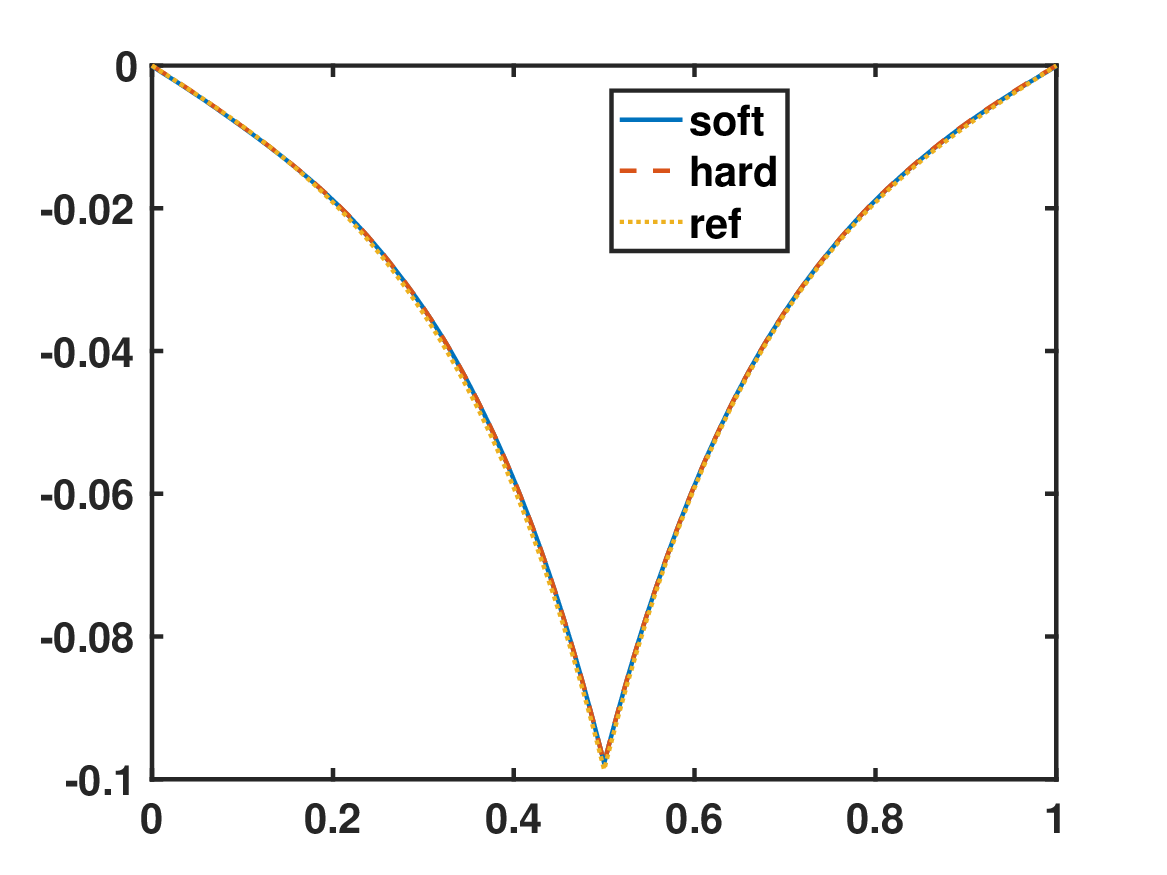}
    \includegraphics[width=0.495\linewidth]{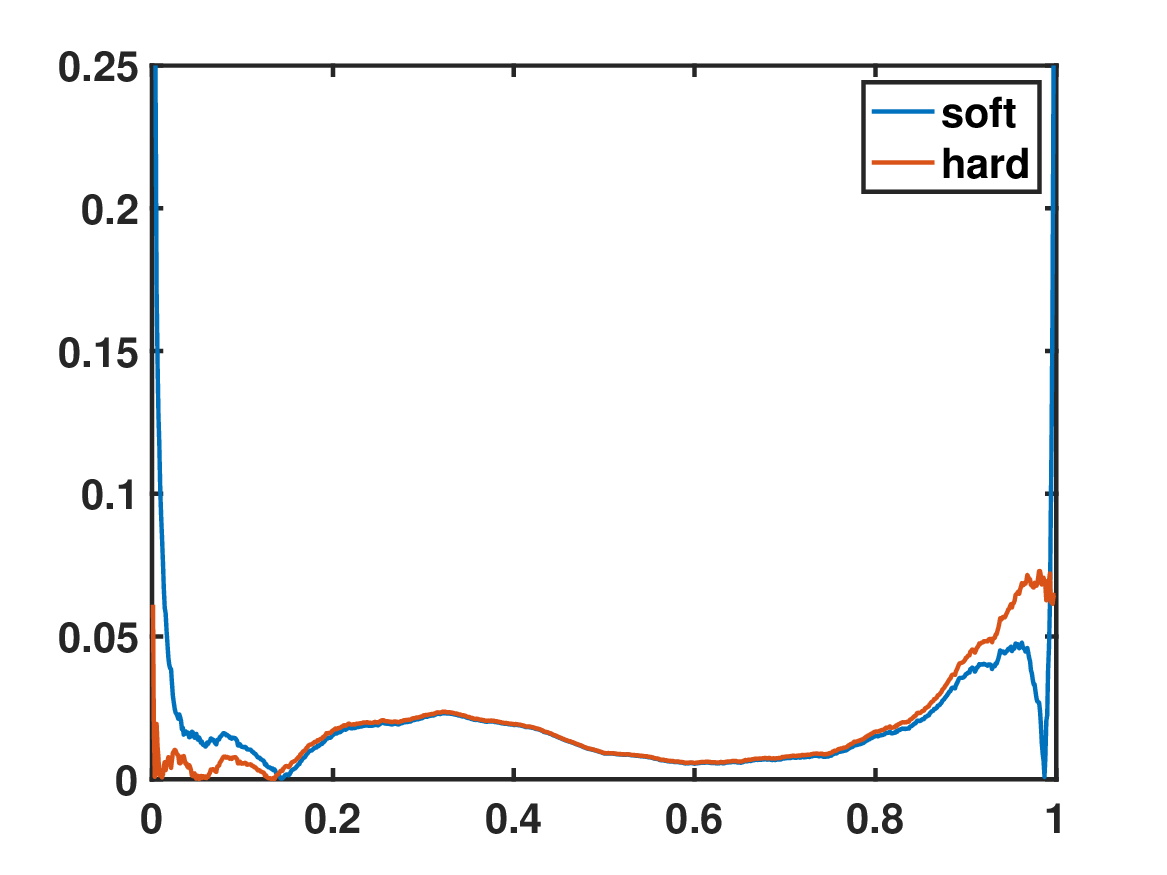}
    \caption{Numerically computed Green's functions of the elliptic operator $\cL=\Delta-k^2$ with homogeneous Dirichlet boundary conditions (left panel) and the resulting relative errors (right panel).}
    \label{fig:bvp}
\end{figure}

Figure \ref{fig:bvp} shows the running average of the hard and soft constraint solutions (i.e.~the ergodic mean)  for $\Delta u=10^{-3}$ together with the exact Green's function
\[
G(u,a) =\displaystyle
\begin{cases} \displaystyle
\exp(-k) \frac{2k \, \exp(ka) - \exp(k(2-a))}{\exp(k) - \exp(-k)} (\exp(ku) - \exp(-ku)) & \text{if } u \leq a, \\[2ex]\displaystyle
\exp(-k) \frac{2k \, \exp(ka) - \exp(-ka)}{\exp(k) - \exp(-k)} (\exp(ku) - \exp(k(2-u))) & \text{if } u > a\,.
\end{cases}
\]
The Dirac delta function in (\ref{elliptic}) and (\ref{elliptic_eps}) is approximated by 
\[
g = (\Delta u)^{-1}e_{\lceil ad\rceil}  \,,
\]
where $e_k\in\R^d$ denotes the $k$-th unit vector, and $\lceil s\rceil$ is the smallest integer greater than $s\in\R$.  
The numerical solutions have been computed by averaging over trajectories of length $T=10$ using an Euler-Maruyama discretisation with step size $h=10^{-7}$. The trajectory is subsampled using a macro time step $\Delta t=10^{-4}$ (The small step size $h$ is required since the largest eigenvalue of the matrix $-\cL$ is approximately $4\cdot 10^6$.) The confinement parameter $\eps$ has been set to $10^{-3}$. 
The right panel of Figure \ref{fig:bvp} shows the relative error
\[
\frac{|G_{\rm num}(u,a) - |G(u,a)|}{|G(u,a)|}\,,
\]
which demonstrates good agreement of both hard and soft constraint averages with the exact solution, except at the boundaries $u\in\{0,1\}$ where the relative error is inevitably large, since $G(0,a)=0=G(1,a)$.

\section{Conclusions}\label{sec:conclusions}

We have studied the realisation of constraints on linear stochastic differential equations (SDE) by strong confining forces. Specifically, we have considered affine constraints and proved that the dynamics converges pathwise to the solution of a linear SDE that lives solely on the constraint subspace in the limit of infinitely strong confinement. The limit SDE can be recast as a linear projection of the original SDE onto the constraint subspace where the projection can be orthogonal or oblique, depending on the choice of the confining force. Under certain assumptions, the original SDE has a unique Gaussian invariant measure, and we have given necessary and sufficient conditions for the confining force and the resulting projection matrix that guarantee that the constrained dynamics preserves the invariant measure (on the constraint subspace). We have illustrated the theoretical findings with two numerical examples: a linear Hamiltonian system that is coupled to a stochastic heat bath, with a constraint imposed on the heat bath variables, and a linear SDE with a drift matrix that is a discretised elliptic differential operator on a one-dimensional domain, with the constraint representing boundary conditions.

Future work ought to address the partially or fully nonlinear case, i.e.~nonlinear SDEs with linear or nonlinear constraints. One case that is of particular relevance, both from a theoretical point of view and for practical purposes, is the underdamped Langevin system with different confinement mechanisms (e.g.\ stiff springs, high friction, or large masses) that act on configuration and/or momentum variables. Langevin systems will be dealt with in a forthcoming paper~\cite{HartmannNeureitherSharma25}.

\section*{Acknowledgement} The authors thank Scott Hottovy and Mark Peletier for helpful discussions on the compactness argument related to the work \cite{Katzenberger91}. This research has been partially funded by the German Federal Government, the Federal Ministry of Education and Research and the State of Brandenburg within the framework of the joint project \emph{EIZ: Energy Innovation Center} (project numbers 85056897 and 03SF0693A) and by the Deutsche Forschungsgemeinschaft (DFG) through the grant \emph{CRC 1114: Scaling Cascades in Complex Systems} (project no. 235221301).

\begin{appendices}

\section{Variation of constants}\label{app:VarConst}

Throughout the  proofs in this article we will make use of the following simple result which summarises an explicit solution for a class of SDEs with linear and nonlinear drift terms.

\begin{prop}[Variations of constants] \label{prop:varofconst}
For $t>0$, let $(x_t,y_t)\in\R^\ell\times\R^m$ be a strong unique solution to a coupled SDE system, where $x_t$ evolves according to  
\begin{align} \label{eq:SDEmatrixnot}
dx_t = A x_t dt+ f(x_t,y_t,t) dt + C dW_t.
\end{align} 
Here $A \in \R^{\ell \times \ell}$ and $C \in \R^{\ell \times \ell}$ are constant matrices, $W_t$ is a standard Brownian motion in $\R^\ell$ and  $x_0\in\R^\ell$ is the initial condition. Furthermore assume that $f\colon \R^{\ell+m}  \times \R_{\geq 0} \to \R^\ell$ satisfies
\begin{enumerate}
    \item uniform Lipschitz continuity in space, i.e.\ there exists $L_f>0$ such that for any $z,z'\in\R^{\ell+m}$ we have  $|f(z,t)-f(z',t)|\leq L_f|z-z'|$;
    \item sublinear growth, i.e.\ there exists a constant $c>0$ such that for any $z\in\R^{\ell+m}$ and $t>0$ we have $|f(z,t)|\leq c(1+|z|)$. 
\end{enumerate}
Then $x_t$ can be explicitly written as
\begin{equation}\label{eq:VarofCon-sol}
    x_t = e^{At}x_0 + \int_0^t e^{A(t-s)} f(x_s,y_s,s) ds + \int_0^t e^{A(t-s)} C dW_s\,.
\end{equation}
\end{prop}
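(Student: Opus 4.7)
The plan is to verify the claimed representation by reducing the SDE to one with no drift via an explicit deterministic change of variable, then integrating directly; the Lipschitz and sublinear growth assumptions on $f$ are only used to guarantee that the integrals appearing in~\eqref{eq:VarofCon-sol} are well-defined (and that the strong unique solution exists, which is already part of the hypothesis).

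First I would introduce the auxiliary process $u_t \coloneqq e^{-At}x_t$. Since $t\mapsto e^{-At}$ is a smooth deterministic matrix-valued function, no stochastic terms arise from it, and an application of the It\^o product rule (or equivalently a multidimensional It\^o formula applied to $(t,x)\mapsto e^{-At}x$) gives
\begin{equation*}
    du_t = -A e^{-At} x_t\,dt + e^{-At}\,dx_t
    = -A e^{-At} x_t\,dt + e^{-At}\bigl(Ax_t + f(x_t,y_t,t)\bigr)dt + e^{-At} C\,dW_t,
\end{equation*}
and the first and third terms cancel since $A$ and $e^{-At}$ commute. Hence
\begin{equation*}
    du_t = e^{-At} f(x_t,y_t,t)\,dt + e^{-At} C\,dW_t.
\end{equation*}

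Next I would integrate this identity from $0$ to $t$, using $u_0 = x_0$, to obtain
\begin{equation*}
    e^{-At} x_t = x_0 + \int_0^t e^{-As} f(x_s,y_s,s)\,ds + \int_0^t e^{-As} C\,dW_s.
\end{equation*}
Multiplying both sides on the left by $e^{At}$ and using $e^{At}e^{-As}=e^{A(t-s)}$ yields~\eqref{eq:VarofCon-sol}.

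There is no real obstacle in the proof: the only subtlety is justifying that the stochastic integral $\int_0^t e^{-As} C\,dW_s$ is well-defined (its integrand is bounded and deterministic on $[0,t]$, so this is immediate) and that $\int_0^t e^{A(t-s)} f(x_s,y_s,s)\,ds$ is well-defined pathwise. The latter follows from the sublinear growth of $f$ together with standard moment bounds for the strong solution $(x_t,y_t)$, while Lipschitz continuity is the standard condition ensuring existence and uniqueness of that strong solution in the first place. Thus the representation~\eqref{eq:VarofCon-sol} holds and conversely any process satisfying~\eqref{eq:VarofCon-sol} is easily checked to solve~\eqref{eq:SDEmatrixnot} by differentiating in $t$, so the two formulations are equivalent.
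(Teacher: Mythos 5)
Your proof is correct, and it takes the opposite (and arguably more standard) direction from the paper's argument. The paper starts from the candidate formula~\eqref{eq:VarofCon-sol}, applies the stochastic integration by parts identity
$\int_0^t e^{A(t-s)} C\,dW_s = CW_t + \int_0^t A e^{A(t-s)} C W_s\,ds$
to isolate the differentiable part $g_t = x_t - CW_t$, computes $\dot g_t$, and then verifies that the resulting process satisfies the original SDE~\eqref{eq:SDEmatrixnot}; uniqueness of the strong solution then closes the loop. You instead go forward: you set $u_t = e^{-At}x_t$, apply the It\^o product rule to kill the linear drift, integrate, and multiply back by $e^{At}$ to obtain the formula directly. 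Both proofs hinge on the same basic tool (the It\^o product rule, of which the paper's stochastic integration by parts is a special case), but your derivation constructs the representation rather than verifying a proposed one, so it does not need to invoke uniqueness at the end. Your closing remark about the converse check (differentiating \eqref{eq:VarofCon-sol} back to \eqref{eq:SDEmatrixnot}) is essentially what the paper actually does; either direction, plus uniqueness, suffices.
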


The existence and uniqueness of the strong solution is standard (see for instance~\cite[Theorem 26.8]{Klenke13}). The integral form of the solution follows by using variations of constants along with integration by parts for It\^o integrals. 
\begin{proof}
Using the stochastic integration by parts formula (see for instance~\cite[Theorem 18.16]{Kallenberg21}) and $W_0=0$ almost surely we find
\begin{align}\label{eq:stoc-IntByParts}
    \int_0^t e^{A(t-s)} C dW_s = C W_t + \int_0^t A e^{A(t-s)} C W_s ds 
\end{align}
almost surely. Hence $x_t - CW_t \eqqcolon g_t$, where $x_t$ is given by~\eqref{eq:VarofCon-sol}, is differentiable and satisfies 
\begin{equation*}
    \dot g_t = A \biggl( e^{At}x_0 + \int_0^t e^{A(t-s)} f(x_s,y_s,s) ds +  \int_0^t A e^{A(t-s)} C W_s ds \biggr) + f(x_t,y_t,t) + ACW_t = A x_t + f(x_t,y_t,t).
\end{equation*}
Therefore using the definition of $g_t$ we have
\begin{align*}
    x_t = g_0 + \int_0^t  \frac{dg_s}{ds} ds + CW_t 
        = x_0 + \int_0^t \bigl[Ax_s + f(x_s,y_s,s)\bigr]ds + \int_0^t C\, dW_s,
\end{align*}
i.e.\ $x_t$ solves~\eqref{eq:SDEmatrixnot}. 
\end{proof}

\section{Multivariate Ornstein-Uhlenbeck processes} \label{app:OU}
In this section we present fundamental results on pathwise solutions of OU processes and the existence of a unique invariant measure. Consider a general OU process
\begin{align} \label{eq:OUgeneral}
d X_t = M(X_t + v) dt + \sqrt{2} C dB_t
\end{align}
where $X_t \in \R^d, \, M \in \R^{d \times d}, \, \rank(M)=d, \, C \in \R^{d \times m}, \, \rank(C) \leq d, \, v\in\R^d$ and $B_t $ is $d$-dimensional Brownian motion. 
Using variation of constants (see Proposition~\ref{prop:varofconst}) it follows that the solution to \eqref{eq:OUgeneral} is 
    \begin{align*}
        X_t = e^{Mt} X_0 + (e^{Mt} - I)v +\sqrt{2} \int_0^t e^{M(t-s)} C dB_s\,.
    \end{align*}
Using $m_t=\E[X_t] \in\R^d$ and $\Sigma_t=\E[(X_t-m_t)((X_t-m_t))^T]\in\R^{d\times d}$ for the mean and variance respectively and assuming that $X_0 \sim \mathcal{N}(m_0, \Sigma_0)$ we find that $X_t \sim \mathcal{N}(m_t, \Sigma_t)$ with 
    \begin{align}\label{linSDE-NormalSolution}
        m_t = e^{Mt} m_0  + e^{Mt}v - v, \ \Sigma_t = e^{Mt} \Sigma_0 e^{M^T t} + 2 \int_0^t e^{M(t-s)} CC^T e^{M^T (t-s)} ds.  
    \end{align}
Here $\mathcal N(\cdot,\cdot)$ is the normal distribution. 
The next proposition states the assumptions under which~\eqref{eq:OUgeneral} admits a unique invariant measure.  
\begin{prop} \label{prop:invmeasOU}
The OU process defined by \eqref{eq:OUgeneral} admits a unique invariant measure if and only if the following two conditions are satisfied
\begin{itemize}[label=\roman*]
    \item[(i)] $M$ is Hurwitz,
    i.e.\ its spectrum lies in the open left-plane;
    \item[(ii)] $(M,C)$ is controllable, i.e.,\
    $\mathrm{rank}[C, MC, M^2C,\ldots, M^{d-1}C]=d$.
\end{itemize}
The invariant measure $\mu$ is given by
\begin{align*}
    \mu = \mathcal{N}(-v,\Sigma), 
\end{align*}
where $\Sigma=\Sigma^T >0$ is the unique solution to the Lyapunov equation
\begin{align}\label{eq:lyapunovgen}
    M \Sigma + \Sigma M^T = -2 CC^T. 
\end{align}
\end{prop}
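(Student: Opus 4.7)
The plan is to reduce to the centered case by substituting $Y_t = X_t + v$, so that $Y_t$ satisfies $dY_t = MY_t\,dt + \sqrt{2}C\,dB_t$, and to show that $Y_t$ admits a unique invariant measure $\mathcal N(0,\Sigma)$ under conditions (i) and (ii); the invariant measure of $X_t$ then follows by translation as $\mathcal N(-v,\Sigma)$. I would argue that any invariant measure must be Gaussian (since the one-step transition kernels are Gaussian and any weak limit of Gaussians is Gaussian), so the problem reduces to existence and uniqueness of a stationary Gaussian $\mathcal N(m_*,\Sigma_*)$.

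For the existence direction, I would use the explicit formulas \eqref{linSDE-NormalSolution}. If $M$ is Hurwitz, then $e^{Mt}\to 0$ exponentially, hence $m_t\to 0$ for any initial mean, and $e^{Mt}\Sigma_0 e^{M^Tt}\to 0$. The integral $2\int_0^t e^{Ms}CC^Te^{M^Ts}\,ds$ converges to
\[
\Sigma \coloneqq 2\int_0^\infty e^{Ms}CC^Te^{M^Ts}\,ds,
\]
and differentiating under the integral together with integration by parts using $\tfrac{d}{ds}(e^{Ms}CC^Te^{M^Ts}) = Me^{Ms}CC^Te^{M^Ts} + e^{Ms}CC^Te^{M^Ts}M^T$ and the boundary values $CC^T$ at $s=0$ and $0$ at $s=\infty$ verifies \eqref{eq:lyapunovgen}. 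Uniqueness of the Lyapunov solution follows because $M$ Hurwitz implies that the map $\Sigma\mapsto M\Sigma+\Sigma M^T$ is invertible on symmetric matrices (its spectrum is $\{\lambda_i+\bar\lambda_j\}$, none of which vanish).

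Positive definiteness of $\Sigma$ is where controllability enters. I would argue $v^T\Sigma v=0$ iff $C^Te^{M^Ts}v=0$ for all $s\ge 0$, which by analyticity and Taylor expansion (bounded above by $d-1$ thanks to Cayley--Hamilton) is equivalent to $C^T(M^T)^k v=0$ for $k=0,\ldots,d-1$, i.e.\ to $v\in\ker[C,MC,\ldots,M^{d-1}C]^T$. Hence $\Sigma>0$ if and only if $(M,C)$ is controllable, establishing the direction $(i)+(ii)\Rightarrow$ unique (nondegenerate) invariant measure.

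For the converse, if a unique (nondegenerate) invariant Gaussian exists then any stationary mean $m_*$ must satisfy $Mm_*=0$ and any stationary covariance $\Sigma_*$ must solve \eqref{eq:lyapunovgen}. If $M$ had an eigenvalue on the closed right half plane, then either $\ker M\ne\{0\}$ (yielding a continuum of admissible $m_*$), or the Lyapunov operator $\Sigma\mapsto M\Sigma+\Sigma M^T$ would fail to be injective on symmetric matrices (yielding non-uniqueness of $\Sigma_*$), or the second-moment formula would diverge; in all cases uniqueness of a bona fide invariant probability measure fails. This forces $M$ to be Hurwitz. Condition (ii) is then extracted from the nondegeneracy of $\Sigma_*$ via the same Kalman argument as above. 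The main obstacle will be handling the purely imaginary eigenvalue case in the converse cleanly, since there the mean merely oscillates rather than diverges and one must argue non-uniqueness rather than non-existence; everything else is bookkeeping built on the explicit Gaussian formulas and classical linear-systems facts.
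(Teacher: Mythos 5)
Your proposal takes essentially the same route as the paper's proof for the sufficiency direction: reduce to the centered equation, read off $m_t$ and $\Sigma_t$ from \eqref{linSDE-NormalSolution}, pass $t\to\infty$, and verify the Lyapunov identity. The differences are in degree of detail. Where the paper cites \cite[Theorem~1.6]{zabczyk2020mathematical} for ``$\Sigma>0$ iff controllable'', you inline the standard Kalman argument ($v^T\Sigma v=2\int_0^\infty|C^Te^{M^Ts}v|^2\,ds=0$ iff $v\perp\mathrm{range}[C,\ldots,M^{d-1}C]$ via analyticity and Cayley--Hamilton), which is correct and makes the proof self-contained. Your Lyapunov verification by differentiating under the integral is equivalent to the paper's observation that $\Sigma$ is the steady state of $\dot\Sigma_t = M\Sigma_t+\Sigma_tM^T+2CC^T$. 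These are cosmetic variations; same mechanism.

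Where you genuinely go further than the paper is the necessity (``only if'') direction, which the paper's three-sentence proof does not seriously address (it only notes that $\Sigma_t$ from \eqref{linSDE-NormalSolution} converges iff $M$ is Hurwitz, which is about the explicit one-trajectory formula rather than a full converse). Your converse sketch is correct in spirit, and you correctly flag its weak point: for a purely imaginary nonzero eigenvalue $\lambda=i\omega$, nonexistence is not automatic, so you must show nonuniqueness. That gap is closable. If $v$ is a (complex) eigenvector for $i\omega$ and $\Sigma_*$ is some stationary covariance, set $P=v\bar v^{*}+\bar v v^{T}$ (real, symmetric). Since $e^{Mt}Pe^{M^Tt}=e^{(\lambda+\bar\lambda)t}P=P$, the Gaussian $\mathcal N(m_*,\Sigma_*+cP)$ is also stationary for every small real $c$, contradicting uniqueness of a nondegenerate invariant Gaussian. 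One should also observe that the proposition as written is literally an ``iff'' for a \emph{unique} invariant measure, which fails without controllability (e.g.\ $C=0$, $M$ Hurwitz gives the unique invariant measure $\delta_{-v}$); both you and the paper implicitly mean ``unique nondegenerate Gaussian invariant measure'', and you are the only one who flags the ``nondegenerate'' qualifier explicitly in the converse. Overall your argument is a slightly more detailed and more complete version of the paper's, built on the same explicit Gaussian formulas.
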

\begin{proof}
Note that $e^{Mt} \to 0$ as $t \to \infty$ since $M$ is Hurwitz. Consequently $\lim\limits_{t \to \infty} m_t = -v$ and $\Sigma \coloneqq \lim\limits_{t \to \infty} \Sigma_t = 2 \int_0^\infty e^{Ms} CC^T e^{M^Ts} ds$ if and only of $M$ is Hurwitz. 
Moreover $\Sigma >0$ if and only if $(M,C)$ is controllable \cite[Theorem 1.6]{zabczyk2020mathematical}. As $\Sigma_t$~\eqref{linSDE-NormalSolution} solves the differential equation 
\begin{equation*}
     \frac{d}{dt} \Sigma_t = M \Sigma_t + \Sigma_t M^T + 2 CC^T,
\end{equation*}
the limiting covariance $\Sigma$ is the steady state of this equation, i.e.\ it satisfies~\eqref{eq:lyapunovgen}.
\end{proof}

The following result shows that the OU process~\eqref{eq:linSDE-v0} can be transformed into the form~\eqref{eq:OU_PHS} used throughout this paper. 

\begin{prop}\label{prop:genOU-PHS}
Consider the linear SDE 
\begin{equation}\label{eq:linSDE-v0}
    dX_t = M(X_t+v) dt + \sqrt{2} C dB_t 
\end{equation}
where $v\in\R^d$,  $M\in\R^{d\times d}$ is Hurwitz, $C\in \R^{d\times d}$ such that the matrix pair $(M,C)$ is controllable and $W_t$ is the standard $d$-dimensional Brownian motion. Equation~\eqref{eq:linSDE-v0} can be rewritten as
\begin{equation} \label{eq:OU_PHS}
    dX_t = (J-A)\Sigma^{-1} (X_t+v) dt + \sqrt{2}CdB_t,
\end{equation}
where $CC^T\eqqcolon A\in\R^{d\times d}$ is symmetric positive semi-definite, $J\in\R^{d\times d}$ is skew symmetric and $\Sigma \in \R^{d \times d}$ is symmetric positive definite, i.e.\ $A=A^T\geq 0, \ J=-J^T$ and $\Sigma=\Sigma^T>0$.

Conversely, for given $J=-J^T, \, \Sigma=\Sigma^T>0$, and $C \in \R^{d \times d}$ which defines $A=CC^T$, ~\eqref{eq:OU_PHS} can be reformulated as~\eqref{eq:linSDE-v0} using  
\begin{equation} \label{eq:driftdecomp}
    M = -(CC^T - J)\Sigma^{-1}\,.
\end{equation}
Furthermore, if $(M,C)$ is controllable, then $M$ is Hurwitz.       
\end{prop}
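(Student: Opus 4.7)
The plan is to tackle the two directions by carefully constructing $J$ and $\Sigma$ in one case and verifying stability in the other.

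For the forward direction, I would use Proposition \ref{prop:invmeasOU}: since $M$ is Hurwitz and $(M,C)$ is controllable, the Lyapunov equation
\begin{equation*}
    M\Sigma+\Sigma M^T=-2CC^T
\end{equation*}
admits a unique symmetric positive definite solution $\Sigma$. Set $A=CC^T$ and define
\begin{equation*}
    J \coloneqq M\Sigma + CC^T,
\end{equation*}
which is manifestly the natural candidate since then $(J-A)\Sigma^{-1}=(M\Sigma+CC^T-CC^T)\Sigma^{-1}=M$. Skew-symmetry follows from the Lyapunov equation: $J+J^T = M\Sigma+\Sigma M^T + 2CC^T = 0$. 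With this choice the two SDEs coincide, which proves the first claim.

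For the converse direction, given $J=-J^T$, $\Sigma=\Sigma^T>0$ and $C$, I would define $M$ by \eqref{eq:driftdecomp} and simply check by direct computation that $\Sigma$ solves the Lyapunov equation $M\Sigma+\Sigma M^T=-2CC^T$; this is essentially the reverse of the manipulation above. The remaining assertion is that, under controllability of $(M,C)$, the existence of such a positive definite $\Sigma$ forces $M$ to be Hurwitz. This is the place where the argument is least automatic, and I expect it to be the main technical step. I would carry it out as follows. Differentiating $e^{Mt}\Sigma e^{M^T t}$ and using the Lyapunov identity gives
\begin{equation*}
    e^{Mt}\Sigma e^{M^Tt} = \Sigma - 2\int_0^t e^{Ms}CC^T e^{M^Ts} ds.
\end{equation*}
Suppose, for contradiction, that $M$ is not Hurwitz. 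Then $M^T$ has an eigenvector $v\in\mathbb{C}^d\setminus\{0\}$ with $M^T v=\lambda v$ and $\mathrm{Re}(\lambda)\ge 0$. Pairing the identity above against $v$ on both sides yields
\begin{equation*}
    e^{2\mathrm{Re}(\lambda)t}\,v^*\Sigma v = v^*\Sigma v - 2\int_0^t e^{2\mathrm{Re}(\lambda)s}|C^Tv|^2 ds.
\end{equation*}
If $\mathrm{Re}(\lambda)>0$ and $C^Tv\neq 0$, the right-hand side diverges to $-\infty$ while the left-hand side stays positive, a contradiction; if $\mathrm{Re}(\lambda)=0$ and $C^Tv\neq 0$, the right-hand side likewise becomes negative for large $t$. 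Hence we must have $C^Tv=0$, but then $v$ is a left eigenvector of $M$ with $v^*C=0$, contradicting the Popov–Belevitch–Hautus (PBH) criterion for controllability of $(M,C)$. Therefore $M$ is Hurwitz, completing the proof.

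The main obstacle, as indicated above, is the Hurwitz conclusion in the converse direction: existence of a positive definite Lyapunov solution alone only gives non-strict stability, and one genuinely needs the controllability hypothesis (via PBH on the adjoint eigenvectors) to rule out purely imaginary or positive real-part eigenvalues. Everything else reduces to bookkeeping.
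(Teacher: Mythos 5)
Your forward direction is essentially the paper's: you obtain $\Sigma$ from Proposition~\ref{prop:invmeasOU}, set $A=CC^T$, and take $J=M\Sigma+CC^T$, which by the Lyapunov identity is the same matrix as the paper's $J=\tfrac12(-\Sigma M^T+M\Sigma)$; the skew-symmetry check is the same rearrangement.

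For the converse Hurwitz claim, however, you take a genuinely different route. The paper conjugates $M$ by $\Sigma^{1/2}$ to get $\bar M=\Sigma^{-1/2}M\Sigma^{1/2}$, pairs an eigenvector of $M^T$ with the decomposition $M^T=-\Sigma^{-1}(CC^T+J)$, and reads off $\mathrm{Re}(\lambda)\,|\bar v|^2=-v^*CC^Tv<0$ directly from the skew-symmetry of $J$ and the controllability characterisation that no eigenvector of $M^T$ lies in $\ker C^T$. You instead integrate the Lyapunov identity to get $e^{Mt}\Sigma e^{M^Tt}=\Sigma-2\int_0^t e^{Ms}CC^Te^{M^Ts}\,ds$, pair against an eigenvector, and derive a contradiction by letting $t\to\infty$ when $\mathrm{Re}(\lambda)\ge 0$ and $C^Tv\ne 0$, then invoke the PBH criterion to rule out $C^Tv=0$. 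Both arguments are correct and use the same controllability ingredient; the paper's Rayleigh-quotient computation is shorter and purely algebraic, while your integral argument is the classical Lyapunov-stability route and makes the dependence on asymptotic behaviour more transparent. One small point worth making explicit in your write-up: before invoking the integral identity you should note that $\Sigma$ defined by the converse data does solve $M\Sigma+\Sigma M^T=-2CC^T$ (a one-line check from \eqref{eq:driftdecomp} and $J^T=-J$), since that identity is what you differentiate.
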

\begin{proof}
Proposition~\ref{prop:invmeasOU} implies the existence of a symmetric positive definite matrix $\Sigma$ (covariance for the invariant measure) which satisfies the Lyapunov equation~\eqref{eq:lyapunovgen}. Multiplying the Lyapunov equation from the right by $\Sigma^{-1}$ we find 
\begin{equation*}
    M= -\bigl(\Sigma M^T + 2CC^T\bigr)\Sigma^{-1}.  
\end{equation*}
Next we introduce the skew-symmetric matrix $J$ 
\begin{equation*}
    J\coloneqq \frac12 \bigl(-\Sigma M^T+M\Sigma\bigr) = -\Sigma M^T +\frac12\bigl(\Sigma M^T+M\Sigma\bigr) = -\Sigma M^T - CC^T, 
\end{equation*}
using which we can write the drift in~\eqref{eq:linSDE-v0} as 
\begin{equation*}
    MX= -\bigl(\Sigma M^T + 2CC^T\bigr)\Sigma^{-1}X = \bigl(J-CC^T \bigr)\Sigma^{-1}X.
\end{equation*}
This leads to the reformulated SDE~\eqref{eq:OU_PHS} with $A\coloneqq CC^T$.

For the converse statement, note that $M$ defined in~\eqref{eq:driftdecomp} satisfies the the Lyapunov equation~\eqref{eq:lyapunovgen}. Since $\Sigma$ is symmetric positive definite, we can define $\bar M = \Sigma^{-\frac{1}{2}}M\Sigma^{\frac12}$ with positive definite $\Sigma^{\frac12},\Sigma^{-\frac12}$. Note that $M$ and $\bar M$ have the same spectrum with the property that if $(v,\lambda)$ is an eigenpair for $M^T$ then $(\bar v,\lambda)$, with $\bar v=\Sigma^{\frac12}v$, is an eigenpair for $\bar M^T$. Therefore using~\eqref{eq:driftdecomp} we find (with $v^*$ as the complex conjugate of $v$) 
\begin{align*}
     \lambda |\bar v|^2 = \lambda \bar v^* \bar v = \bar v^* \bar M^T \bar v = - \bar v^* \Sigma^{-\frac12} CC^T \Sigma^{-\frac{1}{2}} \bar v + \bar v^*\Sigma^{-\frac12} J^T \Sigma^{-\frac{1}{2}} \bar v = - v^*  CC^T v - v^* J v.
\end{align*}
Note that the controllability of $(M,C)$ is equivalent to the statement that no eigenvector of $M^T$ is in the kernel of $C^T$ (see~\cite[Theorem 1.6]{zabczyk2020mathematical} and~\cite[Lemma 2.3]{arnold2014sharp}), i.e.\ $v^*  CC^T v>0$. Using $v^* J v=0$ since $J= - J^T$ and taking the real part of the equation above we find that $\mathrm{Real}(\lambda)<0$, i.e.\ $M$ is Hurwitz. 
\end{proof}

\section{Limit of certain matrix exponentials}\label{app:MatrixExp}

\begin{proof}[Proof of Lemma \ref{lem:matrixexp}]
\ref{lem:matrixexpstatement} Let $\xi\colon \R^d \to \R^k$, $\xi(x) =x^1 - b \in \R^k$, so that $\nabla \xi = \begin{pmatrix}
    I_{k \times k} \\ 0_{(d-k) \times k}\end{pmatrix}. $ Also introduce $V \coloneqq \begin{pmatrix}
        0_{k \times k}\\ I_{(d-k) \times k} \end{pmatrix}\,$ and note that  $\nabla \xi^T V = 0 \in \R^{k\times k}$. Computing 
    \begin{align*}
            e^{-\tilde K t}=\exp\left(- \begin{pmatrix} K_{11} & 0 \\ K_{21} & 0 \end{pmatrix} t \right) = \exp\left(- K \nabla \xi \nabla \xi^T t \right)
             = I_{d \times d}  + \sum\limits_{j \geq 1} \left(- t\right)^j (K \nabla \xi \nabla \xi^T)^j, 
    \end{align*}
   it follows that $e^{-\tilde K t}V=V$. Similarly,  noting that $\nabla \xi^T K \nabla \xi = K_{11} $, we have  $e^{-\tilde K t} K \nabla \xi = K \nabla \xi e^{-K_{11}t}.$
   Writing \[[A,B] = \begin{pmatrix}
       a_{11}& \cdots &a_{1n} & b_{11} & \cdots & b_{1m} \\
       \vdots &      &       &        &      &  \vdots \\
       a_{r1} & \cdots & a_{rn} & b_{r1} & \cdots & b_{rm} 
   \end{pmatrix} \in \R^{r\times (n+m)}\] for two matrices $A  \in \R^{r \times n} , \ B \in \R^{r \times m}$ with entries  $a_{ij}, b_{il} $ for $1\leq i \leq r, 1\leq j \leq n, 1\leq l \leq m, $  respectively, the last two equations can be combined to give
         $e^{-\tilde K t}\left[ V ,  K \nabla \xi \right] = \left[ V ,  K \nabla \xi e^{-K_{11}t}  \right] $ or equivalently
 \begin{align}
&e^{-\tilde K t}= \left[ V ,  K \nabla \xi e^{-K_{11}t}  \right] \left[ V ,  K \nabla \xi \right]^{-1}.
    \end{align}
    Observe that 
    $
        \left[ V ,  K \nabla \xi \right] = \begin{pmatrix}
            0_{k \times k} & K_{11} \\
            I_{(d-k) \times  k)} & K_{21} 
        \end{pmatrix}$
        and its inverse is given by
       $  \left[ V ,  K \nabla \xi \right]^{-1} = 
        \begin{pmatrix}
            -K_{21}K_{11}^{-1} & I_{k \times (d-k)} \\ K_{11}^{-1} & 0
        \end{pmatrix}
    $
    as can be checked. Therefore
    \begin{align*}
       e^{-\tilde K t} &= \left[ V ,  K \nabla \xi e^{-K_{11}t} \right] \left[ V ,  K \nabla \xi \right]^{-1} 
        = \begin{pmatrix}
            0 & K_{11}e^{- K_{11} t} \\  I & K_{21} e^{- K_{11} t}
        \end{pmatrix} \begin{pmatrix}
            -K_{21}K_{11}^{-1} & I_{k \times (d-k))} \\ K_{11}^{-1} & 0
        \end{pmatrix} \\
        &= \begin{pmatrix}
            K_{11} e^{-K_{11}t} K_{11}^{-1} & 0 \\ -K_{21} K_{11}^{-1}
 + K_{21} e^{-K_{11}t} K_{11}^{-1} & I        \end{pmatrix}
 = \begin{pmatrix}
            e^{-K_{11}t}  & 0 \\ -K_{21} K_{11}^{-1}
 + K_{21} e^{-K_{11}t} K_{11}^{-1} & I        \end{pmatrix}\,,
    \end{align*}
where the last equality follows noting that $K_{11} e^{-K_{11}t} K_{11}^{-1} = e^{-K_{11}t}$.

\noindent\ref{lem:matrixexpconvrate} If $K_{11}$ has eigenvalues with positive real parts (i.e.\ $-K_{11}$ is Hurwitz) then $e^{-K_{11}t} \to 0_{k \times k}$ as $ t \to \infty$ and therefore 
      \begin{align*}
        e^{-\frac1\eps \tilde K t}
        \xrightarrow{\eps\to 0} P =\begin{pmatrix}
            0 & 0 \\ -K_{21} K_{11}^{-1} & I
            \end{pmatrix}.
    \end{align*}     
    
    \noindent\ref{lem:integratedexpmatrix-projection} Using the explicit firm of the matrix exponential we have   
    \begin{align}
        \bigl\|e^{- \frac{1}{\eps} \tilde K (t-s) }  - P \bigr\|_F^2 
         &= \tr\bigl(e^{-\frac{1}{\eps}K^T_{11}(t-s)} e^{-\frac{1}{\eps}K_{11}(t-s)} + K_{11}^{-T} e^{-\frac{1}{\eps}K_{11}^T(t-s)} K_{21}^T K_{21} e^{-\frac{1}{\eps}K_{11}(t-s)} K_{11}^{-1}\bigr) \notag\\
        &=  \tr\bigl(e^{-\frac{1}{\eps}K^T_{11}(t-s)} e^{-\frac{1}{\eps}K_{11}(t-s)}   \bigr)+  \tr\bigl(K_{11}^{-T} e^{-\frac{1}{\eps}K_{11}^T(t-s)}  K_{21}^T K_{21} e^{-\frac{1}{\eps}K_{11}(t-s)} K_{11}^{-1}\bigr) \notag\\
        &= \bigl\| e^{-\frac{1}{\eps}K_{11}(t-s)}\bigr\|_F^2+  \bigl\|K_{21}e^{-\frac{1}{\eps}K_{11}(t-s)}  K_{11}^{-1}\bigr\|_F^2 \notag\\
        &\leq \bigl\| e^{-\frac{1}{\eps}K_{11}(t-s)}\bigr\|_F^2 \Bigl( 1 + \bigl\| K_{21}\bigr\|_F^2 \bigl\| K^{-1}_{11}\bigr\|_F^2 \Bigr), \label{eq:app-Exp-pre-explicit}
    \end{align}
where we used the sub-multiplicativity of the Frobenius norm to arrive at the inequality. 
We can write 
\begin{align*}
    K_{11} = V \left( \Lambda + N \right) V^{-1}
\end{align*}
in its Jordan normal form, where $\Lambda$ is a diagonal matrix with the eigenvalues of $K_{11}$, $N$ is an upper triangular nilpotent matrix of order $m$, i.e.\ $N^m = 0$ for some $m \in \mathbb{N}$, and $V$ consists of the (generalized) eigenvectors. Using $e^{-K_{11}} = V e^{-\Lambda} e^{-N} V^{-1}$ together with the sub-multiplicativity of the Frobenius norm, we have 
\begin{align*}
    \| e^{-\frac{1}{\eps}K_{11}(t-s)} \|_F^2 &=
    \| V e^{-\frac{1}{\eps}\Lambda(t-s)} e^{-\frac{1}{\eps}N(t-s)}V^{-1} \|_F^2  \leq \| V \|_F^2  \| e^{-\frac{1}{\eps}\Lambda(t-s)} \|_F^2  \| e^{-\frac{1}{\eps}N(t-s)} \|_F^2  \| V^{-1} \|_F^2\,.
\end{align*} 
Denote the real parts of the eigenvalues of $K_{11}$ by $\lambda_i>0$ with $i=1,\ldots,k$, and assume them to be ordered, i.e. $0<\lambda_1 \leq \ldots \leq \lambda_k$. Using this we find
\begin{align*}
\bigl\| e^{-\frac{1}{\eps}\Lambda(t-s)} \bigr\|_F^2 = \tr( e^{-\frac{1}{\eps}(\Lambda + \Lambda^*) (t-s)} ) = \sum\limits_{i=1}^k e^{-\frac{2}{\eps}\lambda_i (t-s)} \leq k e^{-\frac{2}{\eps} \lambda_{1}(t-s)},
\end{align*} 
where $\Lambda^*$ is the complex conjugate of $\Lambda$. 
Next, observe that since $N$ is a  nilpotent matrix of order $m$, i.e.\ $N^m=0$, we have $e^{-\frac{1}{\eps}N} = \sum\limits_{j=0}^{m-1} \left(\frac{-N}{\eps}\right)^j\frac{1}{j!}$, which leads to 
\begin{align*}
    \| e^{-\frac{1}{\eps}N(t-s)}\|_F^2 \leq m\sum_{j=0}^{m-1} \left(\frac{(t-s)}{\eps}\right)^j\frac{1}{j!} \|N^j\|_F^2,
\end{align*}
where the constant $m$ arises due to Young's inequality. Using the condition number $\kappa(V)=\|V\|_F^2\|V^{-1}\|_F^2$ we arrive at the overall estimate
\begin{align*}
     \| e^{-\frac{1}{\eps}K_{11}(t-s)} \|_F^2 \leq \kappa(V) km e^{-\frac{2}{\eps} \lambda_1(t-s)} \sum\limits_{j=0}^{m-1}(t-s)^j \frac{\|N^j \|_F^2}{j! \eps^j}.
\end{align*}
Substituting into~\eqref{eq:app-Exp-pre-explicit} we arrive at
\begin{align}\label{eq:Frobeniusest_expK-P-Explicit}
      \| e^{-\frac{1}{\eps}K_{11}(t-s)} -P \|_F^2 \leq  \Bigl( 1 + \bigl\| K_{21}\bigr\|_F^2 \bigl\| K^{-1}_{11}\bigr\|_F^2 \Bigr) \kappa(V) k m e^{-\frac{2}{\eps} \lambda_1(t-s)} \sum\limits_{j=0}^{m-1}(t-s)^j \frac{\|N^j \|_F^2}{j! \eps^j},
\end{align}
which completes the proof of~\eqref{eq:Frobeniusest_expK-P}.

Next we consider the special case that $K_{11}$ is symmetric (consequently non-defective), and therefore all its eigenvalues are real. Then $K_{11} = O \Lambda O^T$ for some orthonormal matrix $O$ (i.e.\ $OO^T=I$)  and we arrive at 
\begin{align*}
    \|e^{-\frac{1}{\eps}K_{11}(t-s)}\|_F^2 &= \tr(O e^{-\frac{1}{\eps}\Lambda(t-s)}O^T O e^{-\frac{1}{\eps}\Lambda(t-s)}O^T) = \| e^{-\frac{1}{\eps}\Lambda(t-s)} \|_F^2 \leq k e^{-\frac{2\lambda_1}{\eps}(t-s)} ,
\end{align*} 
where $\lambda_1>0$ is the smallest eigenvalue of $K_{11}$ as before. In this case $N\equiv 0$ and using $\kappa(V)=1$ we arrive at part~\ref{item:sym-nonDef}. Note that the same arguments as above also apply for any non-defective matrix (now $\kappa(V)\neq 1$ but $N\equiv 0$) which leads to part~\ref{item:nonDef} (note that we have used the time-integral bound derived below as well).

Finally, we prove the bound~\eqref{eq:matrixexp-int-estimate} on the time integral. To achieve this we use the bound ($j\in \{0,\ldots, m-1\}$)
\begin{equation}\label{eq:calcintexptimespolynomial}
\begin{aligned} 
    \int_0^t e^{-\frac{2\lambda_1}{\eps}(t-s)} \frac{(t-s)^j}{\eps^j} ds &= \int_0^t e^{-\frac{2\lambda_1}{\eps}s} \frac{s^j}{\eps^j} ds = - \frac{t^j}{2\lambda_1 \eps^{j-1}} e^{-\frac{2\lambda_1}{\eps}t} + \frac{j}{2\lambda_1}\int_0^t e^{-\frac{2\lambda_1}{\eps}s} \frac{s^{j-1}}{\eps^{j-1}} ds  \\
    &= \ldots = - e^{-\frac{2\lambda_1}{\eps}t} \sum\limits_{\ell=1}^{j+1}\frac{ t^{j-\ell+1}}{\eps^{j-\ell}\lambda_1^\ell } \frac{j!}{(j-\ell+1)!} + \frac{j!}{(2\lambda_1)^{j+1}}\eps  \leq  \frac{j!}{(2\lambda_1)^{j+1}}\eps, 
\end{aligned}
\end{equation}
where the first equality follows via the substitution $r=t-s$ and the rest of the equalities follow by performing a series of integration by parts. 

The bound~\eqref{eq:matrixexp-int-estimate} then follows immediately by using~\eqref{eq:app-Exp-pre-explicit}-\eqref{eq:Frobeniusest_expK-P-Explicit} along with the bound above, since
\begin{align*}
    \int_0^t \| e^{-\frac{1}{\eps}K_{11}(t-s)} - P \|_F^2 ds &\leq \Bigl( 1 + \bigl\| K_{21}\bigr\|_F^2 \bigl\| K^{-1}_{11}\bigr\|_F^2 \Bigr) \kappa(V) k m\int_0^t e^{-\frac{2}{\eps}\lambda_1(t-s)}\sum_{j=0}^{m-1} \frac{(t-s)^j}{\eps^j}\frac{1}{j!}\|N^j\|_F^2 ds  \\ &\leq \eps \Bigl( 1 + \bigl\| K_{21}\bigr\|_F^2 \bigl\| K^{-1}_{11}\bigr\|_F^2 \Bigr) \kappa(V) k m\sum\limits_{j=0}^{m-1}\frac{\|N^j\|_F^2}{(2\lambda_1)^{j+1}}. 
\end{align*}
\end{proof}
\begin{proof}[Proof of Corollary \ref{cor:expK-P}]
    Let $\delta,\eps>0$ and introduce 
    \begin{align*}
        f_j(t) \coloneqq e^{-\delta t}  \beta_j  \left(\frac{t}{\eps}\right)^j\,, \ \beta_j > 0,
    \end{align*}
    where $t\in [0,\infty)$ and $j\in\mathbb{N}$. 
    Then $f_j(0) = 0$ and $\lim\limits_{t \to \infty} f_j(t) = 0$ and $f_j'(0) = \beta_j > 0$ for any $j$. Hence, there exists $t_j^* > 0$ such that $f_j(t^*_j) = \max_{t \geq 0} f_j(t) > 0$ and computing $f'_j(t) = \left(- \delta \beta_j t^j \eps^{-j} + \beta_j j t^{j-1}\eps^{-j}\right)e^{-\delta t} $ we find $t^*_j = \frac{j}{\delta}$. Hence $f_j(t^*_j) = \beta_j \left(\frac{j}{\delta \eps e}\right)^j$. With this
    \begin{align*}
        e^{-2\frac{\lambda}{\eps} t }  \sum\limits_{j=0}^{m-1} \beta_j \left(\frac{t}{\eps}\right)^j = e^{-2\frac{\lambda}{\eps} t + \delta t} \sum\limits_{j=0}^{m-1} f_j(t) \leq e^{-2\frac{\lambda}{\eps} t + \delta t} \sum\limits_{j=0}^{m-1} \beta_j   \left(\frac{j}{\delta\eps e}\right)^j\,. 
    \end{align*}
    Now let $\delta = 2\frac{\tilde \delta}{\eps}$ for   $\tilde \delta\in  (0,\lambda)$. Inserting $\beta_j = \|N^j\|_F^2 \leq \|N\|_F^{2j}$ which holds by sub-multiplicativity of the norm, we find that for any $\tilde \delta \in (0,\lambda)$
        \begin{align*}
        e^{-2\frac{\lambda}{\eps} t }  \sum\limits_{j=0}^{m-1} \|N^j\|_F^{2} \left(\frac{t}{\eps}\right)^j \leq e^{-2\frac{\lambda -\tilde \delta}{\eps} t } \sum\limits_{j=0}^{m-1} \|N^j\|_F^{2} \left(\frac{j}{2\tilde \delta e}\right)^j\, \leq e^{-2\frac{\lambda -\tilde \delta}{\eps} t } (m-1) \left(\frac{{m-1}}{2\tilde \delta e}\right)^{m-1} \|N\|_F^{2(m-1)}.
    \end{align*}
\end{proof}

\end{appendices}
{\small
\bibliographystyle{alpha}
\bibliography{sample}
}
\end{document}